\documentclass[11pt,a4paper,reqno]{amsart}            

\usepackage[english]{babel}
\usepackage{amssymb,amsmath,amsthm}
\usepackage{mathrsfs}
\usepackage{enumitem}
\usepackage[immediate,debrief]{silence}
\usepackage[pagebackref]{hyperref}

\usepackage{mathtools}
\usepackage{esint}

\usepackage{color}
\usepackage{comment}
\usepackage{scrtime}

\hypersetup{
	colorlinks,
	linkcolor={blue!90!black},
	citecolor={red!80!black},
	urlcolor={blue!50!black}
}
\usepackage{tikz}
\usepackage{tikz-cd}
\usepackage{graphicx}
\usetikzlibrary{shapes.geometric,arrows,fit,matrix,positioning,trees,snakes}
\tikzset
{
	treenode/.style = {circle, draw=black, align=center, minimum size=1cm},
	subtree/.style  = {isosceles triangle, draw=black, align=center, minimum height=0.5cm, minimum width=1cm, shape border rotate=90, anchor=north}
}

\renewcommand\mathcal{\mathscr}
\theoremstyle{plain}
\newtheorem{theorem}{Theorem}[section]
\newtheorem*{theorem*}{Theorem}
\newtheorem{lemma}[theorem]{Lemma}
\newtheorem*{lemma*}{Lemma}

\newtheorem{proposition}[theorem]{Proposition}

\newtheorem*{conjecture*}{Conjecture}

\newtheorem*{notation*}{Notation}
\newtheorem{thmx}{Theorem}

\theoremstyle{remark}
\newtheorem{remark}[theorem]{Remark}
\newtheorem*{remark*}{Remark}

\theoremstyle{definition}

\newtheorem*{definition*}{Definition}

\theoremstyle{example}

\newtheorem*{example*}{Example}

\numberwithin{equation}{section}

\newcount\quantno
\everydisplay{\quantno=0}\everycr{\quantno=0}
\newcommand\quant{\advance\quantno by1
	\ifnum\quantno=1\qquad\else\quad\fi\forall }

\newcommand\rest[1]{\kern-.1em
	\lower.5ex\hbox{$\scriptstyle #1$}\kern.05em}

\newcommand\funnyk{k\hbox to 0pt{\hss\phantom{g}}}

\newcommand\whH{\widehat{\phantom{G}}\hbox to 0pt{\hss $H$}}

\newcommand\emspace{\hbox to 6pt{\hss}}

\newcommand\supp{\mathrm{supp}}

\DeclareSymbolFont{EUEX}{U}{euex}{m}{n}

\DeclareSymbolFont{euexlargesymbols}{U}{euex}{m}{n}
\DeclareMathSymbol{\intop}{\mathop}{euexlargesymbols}{"52}
\def\int{\intop\nolimits}

\DeclareSymbolFont{euexsymbols}     {U}{euex}{m}{n}
\DeclareMathSymbol{\smallint}{\mathop}{euexsymbols}{"52}

\begin{document}
	
	\title[Long-Time Asymptotics for Heat Equation on Homogeneous Trees]{Long-Time Asymptotics for the Heat Kernel and for Heat Equation Solutions on Homogeneous Trees
	}

	\subjclass[2020]{35K08, 35B40, 43A85, 43A90, 22E35} 
	
	\keywords{Homogeneous trees, Laplace--Beltrami operator, spherical functions,
heat kernel, asymptotic formula.}
	
	\thanks{Acknowledgments. This work is funded by the
Deutsche Forschungsgemeinschaft (DFG, German Research Foundation)–SFB-Geschäftszeichen
–Projektnummer SFB-TRR 358/1 2023 –491392403. 
	}
	
\author[]{Effie Papageorgiou }

\address[Effie Papageorgiou]{Institut f\"ur Mathematik \\ Universit\"at Paderborn\\
D-33098 Paderborn \\ Germany}
\email{papageoeffie@gmail.com}

\begin{abstract}
We study the large-time behavior of the continuous-time heat kernel and of solutions to the heat equation on homogeneous trees. First, we derive sharp asymptotic formulas for the heat kernel as $t\to\infty$. Second, using them, we show that solutions with initial data in weighted $\ell^1$ classes, asymptotically factorize in $\ell^p$ norms, $p\in[1,\infty]$, as the product of the heat kernel, times a $p$-mass function, dependent on the initial condition and $p$. The  $p$-mass function is described in terms of boundary averages associated with Busemann functions for $p<2$, while for $p\ge 2$, it is expressed through convolution with the ground spherical function. For comparison, the case of the integers shows that a single constant mass determines the asymptotics of solutions to the heat equation for all $p$, emphasizing the influence of the graph geometry on heat diffusion.

\end{abstract}

\maketitle

\section{Introduction}

The study of the heat equation has given rise to two fundamental tools in modern mathematics: the Fourier transform and the heat kernel. Harmonic analysis, understood as the study of the Fourier transform, is one of the main tools used in this paper, while the heat kernel is its central object of study. In the setting of Riemannian geometry, the heat kernel can be constructed intrinsically (see, e.g., \cite{Gri}), revealing a deep connection between its analytic behavior and the geometry of the underlying manifold.

On the other hand, random walks on graphs form a vast subject with strong connections to probability, analysis, geometry, and algebra, and they have long been studied as discrete analogs of diffusions on manifolds. A central theme is the relationship between the geometry of an infinite graph and the asymptotic behavior of the associated random walk. In close connection to discrete time heat kernel are \textit{local limit theorems}; however, these deal with fixed vertices, while many applications require uniform asymptotics in large space--time regimes.  
Continuous-time random walks provide a more regular framework than discrete-time walks and may be defined via subordination by a Poisson process; a standard reference is \cite[Section 5]{Barlow}.

 In this paper, we consider the heat semigroup $(e^{-t\mathcal{L}})_{t> 0}$ associated with the natural (nearest-neighbor) Laplacian $\mathcal{L}$ on a homogeneous tree, a basic example of a graph with exponential volume growth. 
The first main result of this paper is the derivation of large-time asymptotics for the continuous-time heat kernel on homogeneous trees, in large space-time regimes. To state this result, let us recall that a homogeneous tree $\mathcal{T}$ of degree $q + 1$, $q\geq 2$, is a connected graph with no loops, in which every vertex is adjacent to $q + 1$ other vertices. We denote by $d$ the natural distance on $\mathcal{T}$. Let $o$ be a fixed reference point on $\mathcal{T}$; write $|x|$ for $d(x; o)$. Denote by $\gamma(0)=2\sqrt{q}/(q+1)$.
The following heat kernel asymptotics are the first main result of this work.

\begin{thmx}\label{thm:thmA}
		Let $h_t$ denote the heat kernel on a homogeneous tree and $h_t^{\mathbb{Z}}$ that of the integers. Then, for $x\in \mathcal{T}$ such that $|x|\to \infty$ and $t\to \infty$, we have the following asymptotics:
		\[
		h_t(x)= c\, \frac{2}{\gamma(0)}\,t^{-1} e^{-(1-\gamma(0))t}\,(1+|x|)\, q^{-|x|/2}\, h_{t\gamma(0)}^{\mathbb{Z}}(|x|+1)+o(1), \]
where $c$ is an explicit constant depending on the rate of $|x|/t$. 

\noindent Instead, if $x\in \mathcal{T}$ is such that $|x|\leq \rho(t)$, with  $\frac{\rho(t)}{\sqrt{t}}\rightarrow 0$ as $t\rightarrow \infty$, then
		\[
		h_t(x)=\frac{\sqrt{2}}{\sqrt{\pi}}\frac{q(q+1)}{(q-1)^2}\gamma(0)^{-3/2}\, t^{-3/2}\,e^{-(1-\gamma(0))t}\, \varphi_0(x)\left(1+O\left(\frac{\rho(t)}{\sqrt{t}}\right)\right), 
		\]
        where $\varphi_0(x)=\left(1+\frac{q-1}{q+1}|x|\right)q^{-|x|/2}$.
	\end{thmx}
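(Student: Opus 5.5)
Write $\mathcal{L}=I-P$, where $P$ is the nearest-neighbour averaging operator, and denote the spherical functions by $\varphi_z$ with eigenvalue $\gamma(z)=\frac{q^{1/2+iz}+q^{1/2-iz}}{q+1}$. The plan starts from the inverse spherical transform
\[
h_t(x)=c_G\int_{0}^{\tau/2} e^{-t(1-\gamma(z))}\,\varphi_z(x)\,|\mathbf{c}(z)|^{-2}\,dz,\qquad \tau=\frac{2\pi}{\log q}.
\]
There is a better route that avoids stationary phase altogether. It rests on the classical identity expressing the radial heat kernel on $\mathcal{T}$ through a discrete derivative of a one-dimensional kernel. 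Conjugating the radial Laplacian by $q^{-|x|/2}$ gives
\[
h_t(x)=C\,q^{-|x|/2}e^{-t}\bigl(I_{|x|}(\gamma(0)t)-q^{-1}I_{|x|+2}(\gamma(0)t)\bigr),
\]
up to an explicit constant, where $I_n$ is the modified Bessel function. In the same notation, $h^{\mathbb{Z}}_{s}(n)=e^{-s}I_n(s)$.

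Step one is to derive (or recall) this formula. It follows from the spherical transform by writing $\varphi_z(n)=q^{-n/2}\bigl(\mathbf{c}(z)q^{izn}+\mathbf{c}(-z)q^{-izn}\bigr)$ and absorbing the $\mathbf{c}$-function factor $|\mathbf{c}(z)|^{-2}\mathbf{c}(z)$, which is a trigonometric polynomial divided by $q^{1/2+iz}-q^{-1/2-iz}$, into a telescoping sum over $n$. The outcome expresses $h_t(x)$ through $e^{-t}q^{-n/2}\sum_{k\ge0}q^{-k}\,\bigl(I_{n+2k}-I_{n+2k+2}\bigr)(\gamma(0)t)$, or the equivalent closed form. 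The key observation is that
\[
I_{n}(s)-I_{n+2}(s)=\frac{2(n+1)}{s}\,I_{n+1}(s).
\]
This is exactly the source of the factor $\frac{2}{\gamma(0)}t^{-1}(1+|x|)\,h^{\mathbb{Z}}_{t\gamma(0)}(|x|+1)$ in the first formula. The remaining geometric sum in $k$ produces the constant $c$: when $|x|/t$ has a limit, consecutive ratios $I_{n+2k+1}/I_{n+1}$ converge to $\bigl(\kappa(\,|x|/(\gamma(0)t)\,)\bigr)^{2k}$ with $\kappa(u)=\frac{\sqrt{1+u^2}-u}{1}$, by the uniform Debye asymptotics of $I_\nu(\nu z)$. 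Dominated convergence in $k$ (legitimate since $q^{-k}$ is summable and the ratios are at most $1$) then yields $c=\sum_k q^{-k}\kappa^{2k}\cdot(\text{normalisation})$. This explicit $c$ depends on the rate of $|x|/t$, and the error $o(1)$ is relative.

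Step two is the regime $|x|\le\rho(t)=o(\sqrt t)$. Here I would instead use the Laplace method directly on the integral near $z=0$. We have $1-\gamma(z)=1-\gamma(0)+\frac{\gamma(0)(\log q)^2}{2}z^2+O(z^4)$, and $|\mathbf{c}(z)|^{-2}\sim C z^2$ because $\mathbf{c}$ has a simple pole at $0$. Moreover $\varphi_z(x)=\varphi_0(x)\bigl(1+O(z^2|x|^2)\bigr)$ uniformly in the range $|z||x|\lesssim 1$. Rescaling $z=w/\sqrt t$, the integral $\int z^2e^{-a t z^2}dz=\frac{\sqrt\pi}{4}(at)^{-3/2}$ gives the $t^{-3/2}$ rate. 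The relative error is $O(|x|^2/t)+O(t^{-1})$, which is within the stated $O(\rho/\sqrt t)$. Matching the constants $c_G$, $C$, and $a=\gamma(0)(\log q)^2/2$ reproduces $\sqrt{2/\pi}\,\frac{q(q+1)}{(q-1)^2}\gamma(0)^{-3/2}$. Away from $z=0$ the integrand is exponentially smaller, since $1-\gamma(z)$ is minimised only at $z=0$ (and at $\tau/2$ by symmetry, which is handled identically or absorbed via the full-period form).

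I expect the main obstacle to be step one: getting the exact combinatorial identity with correct constants, and making the Bessel-ratio asymptotics uniform across all regimes of $|x|/t$, including $|x|/t\to0$, $\to\infty$, and intermediate. The fallback for uniformity is the monotonicity $I_{\nu+1}(s)/I_\nu(s)\le s/(\nu+\sqrt{\nu^2+s^2})$ together with Amos-type lower bounds. These sandwich the ratio by $\kappa$ up to $o(1)$ whenever $|x|\to\infty$.
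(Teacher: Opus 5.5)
Your proposal is correct and follows essentially the paper's proof: the Cowling--Meda--Setti series for $h_t$ (you obtain form (ii) from form (i) via $I_n-I_{n+2}=\frac{2(n+1)}{s}I_{n+1}$, whereas the paper quotes (ii)); then the limits $h^{\mathbb{Z}}_{t\gamma(0)}(|x|+2k+1)/h^{\mathbb{Z}}_{t\gamma(0)}(|x|+1)\to\kappa^{2k}$ from the uniform Debye asymptotics (the same Cowling--Meda--Setti asymptotic for $h^{\mathbb{Z}}$ the paper uses) with dominated convergence, giving $c=\sum_{k\ge0}q^{-k}\kappa^{2k}$, i.e.\ the paper's $C+1$; and Laplace's method on the inversion formula for $|x|=o(\sqrt{t})$, where your second-order bound $\varphi_z(x)=\varphi_0(x)\bigl(1+O(z^2|x|^2)\bigr)$ (valid for all real $z$ since $z\mapsto\varphi_z(x)$ is even) even sharpens the paper's first-order estimate. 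Delete the displayed two-term ``closed form'' $q^{-|x|/2}e^{-t}\bigl(I_{|x|}-q^{-1}I_{|x|+2}\bigr)(\gamma(0)t)$, which is false for every $q\ge2$: after conjugation the heat equation at $o$ becomes the condition $g(-1)=q^{-1}g(1)$, which this expression violates, and the image solution is the full series you actually use. Also note that $z=\tau/2$ is the maximum of $1-\gamma$, not a second minimum, and that the dominating sequence should be $(2k+1)q^{-k}$ to absorb the factor $(|x|+2k+1)/(|x|+1)$.
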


To the best of our knowledge, so far only global bounds (albeit sharp) were available in the literature, see \cite[Proposition 2.5]{CMS}. Instead, global asymptotics for the heat kernel on hyperbolic space as time and space grow to infinity were established in \cite{AJ99}.

The next main task of the present work is to discuss the large-time behavior of \textit{solutions} to the heat equation on $\mathcal{T}$; in fact, the previous asymptotics on the heat kernel are instrumental to this end, while bounds seem not to suffice. More precisely, the question is motivated by a classical result on Euclidean space, which we now describe. Let $u_0\in L^1(\mathbb{R}^n)$, denote by $M=\int_{\mathbb{R}^n}u_0(x)\, dx$ its mass, and by $G_t(x) = (4\pi t)^{-n/2}e^{-\frac{|x|^2}{4t}}$ the Euclidean heat kernel. Then for all $1\leq p\leq \infty$,
\begin{equation}\label{eq:conv_Rn}
		\lim_{t \to \infty} 
		\frac{1}{\|G_t\|_{L^p(\mathbb{R}^n)}}
		\| u_0\ast G_t(x)-M\, G_t\|_{L^p(\mathbb{R}^n)}=0.
		\end{equation}
        Notice that Young's inequality only gives $\|u_0\ast G_t\|_{p}\leq \|u_0\|_{1}\|G_t\|_{p}$, so the mass $M$ times $G_t$ ``cancels enough'' with $u_0\ast G_t$ so that \eqref{eq:conv_Rn} holds.
        
Beyond the Euclidean framework, the geometry of the underlying space plays a decisive role. On manifolds with non-negative Ricci curvature, analogous asymptotic results hold, see \cite{GPZ}. In contrast, negatively curved spaces exhibit markedly different behavior. In real hyperbolic spaces $\mathbb{H}^n$, Vázquez showed in \cite{Vaz} that the classical $L^{1}(\mathbb{H}^n)$-asymptotic convergence in \eqref{eq:conv_Rn} holds for \textit{radial} initial data; instead, he showed that the $L^1(\mathbb{H}^3)$ result may fail if the initial condition is non-radial, and that the $L^{\infty}(\mathbb{H}^3)$ result in \eqref{eq:conv_Rn} may fail even with radial symmetry of the solution. These results were later extended in \cite{APZ} to Riemannian symmetric spaces of non-compact type $G/K$, which include all hyperbolic spaces; in the same setting, a correction to the constant (showing it should vary with $p$), so that \eqref{eq:conv_Rn} holds for $p>1$ -albeit just for symmetric initial data- was obtained in \cite{Naik}.

The appropriate approach toward understanding this phenomenon turns out to be the introduction of suitable \emph{mass corrections}, which turn out to be \textit{functions} instead of constants. For symmetric spaces $G/K$, and for any compactly supported initial datum, it was shown in \cite{P1} that the asymptotic behavior of caloric functions, that is, solutions to the heat equation, depends crucially on the value of $p\in[1,\infty]$: for $1\leq p<2$ and $2\leq p\leq\infty$, distinct mass terms arise naturally. In the bi-$K$-invariant setting, these masses reduce to constants, related to the spherical transform of the initial condition. Analogous results were later obtained in \cite{PT} in the non-Archimedean setting of affine buildings (even exotic ones) for discrete-time random walks, where mass functions were defined using boundary integrals and Macdonald's ground spherical functions, and where the dichotomy at the critical exponent $p=2$ again plays a fundamental role.

The purpose of the present work is to establish the continuous-time analog of these results on \emph{homogeneous trees}. Homogeneous trees occupy a natural position between symmetric spaces and affine buildings: they constitute the simplest example of the latter, but also a discrete analog to real hyperbolic spaces. Even more, heat propagation on homogeneous trees exhibits striking non-Euclidean features, as already observed in \cite{CMS, MedSe} (see also \cite{T} for random walks on affine buildings), as a result of the ``negatively curved'' geometry.

To state our second result, let us introduce some further notation. The geometric boundary $\Omega$ of a homogeneous tree $\mathcal{T}$ can be viewed as the set of all rays tipped at $o$. For
	any $x \in  \mathcal{T}$, we set $\Omega(o, x)=\{\omega\in \Omega: \, \text{the geodesic } [o,\omega) \, \text{contains } x\}$. The boundary 
	carries the harmonic measure $\nu$ centered at $o$, such that $\nu(\Omega(o, x))$ depends only on the distance
	$d(o, x)$. Recall that we say that a function $f$ is radial if $f(x)$ depends only on $d(o, x)$. Let $h_{\omega}(x)$ be the Busemann (or horocycle or height) function for $x \in  \mathcal{T}$ and $\omega \in \Omega$.
	
	We next introduce mass functions; to this end, we need some weighted $\ell^1$ spaces. We say that $f$ belongs to $\ell^1(w_p)$,
	$p \in [1, \infty]$, if
	\[
	\sum_{y \in \mathcal{T}} |f(y)| w_p(y) < \infty,
	\]
	where the weight $w_p$ is given by
	\[
	w_p(y) =
	\begin{cases} q^{\frac{1}{p}|y|}  &\text{if } p \in [1, 2), \\
		q^{\frac{1}{2}|y|} , &\text{if } p \in [2, \infty].
	\end{cases}
	\]
	Now, for $f \in \ell^1(w_p)$, the mass function $M_p(f)$ is defined by the formula
	\begin{align*} 
		M_p(f)(x) &:= \sum_{y \in \mathcal{T}} f(y)	
		\fint_{\Omega(o,x)} q^{\frac{1}{p}h_{\omega}(x)}\, d\nu(\omega), \qquad &\text{if } p \in [1, 2), \\
		\intertext{or}
		M_p(f)(x) &:= \frac{1}{\varphi_0(x)} \sum_{y \in \mathcal{T}} f\ast\varphi_0(x), &\text{if } 
		p \in [2, \infty),  
	\end{align*}
and it is in fact a bounded function on $\mathcal{T}$. The mass function $M_p(f)$ is well-defined also when $f$ is \textit{radial} and belongs to $ \ell^1_{\delta_p}(\mathcal{T})$; this space is defined by
	\begin{equation*}
		\ell^1_{\delta_p}(\mathcal{T}) =
		\Big\{f : \mathcal{T} \rightarrow \mathbb{C}: \sum_{y \in \mathcal{T}} |f(y)| \varphi_{i\delta_p}(y) < \infty
		\Big\},
	\end{equation*}
	where $\varphi_{\lambda}$, $\lambda\in \mathbb{C}$, are the radial eigenfunctions of the Laplace operator $\mathcal{L}$ satisfying
the normalisation condition $\varphi_{\lambda}(o)=1$, while $$\delta_p=\begin{cases}
		\frac{1}{p}-\frac{1}{2}, &\quad 1\leq p< 2,\\
		0, &\quad 2\leq p\leq \infty.
	\end{cases}$$ 
In this case then, that is, when the initial condition is in $ \ell^1_{\delta_p}(\mathcal{T})$ and radial, mass functions simplify to \emph{constants}:
	\begin{equation*}
		M_p(f) \equiv \sum_{y\in \mathcal{T}} f(y) \varphi_{i\delta_p}(y).
	\end{equation*}
Then the second main result is the following.

	\begin{thmx}\label{thm:thmB}
		Let $h_t$ be the continuous time heat kernel on a homogeneous tree, associated with the combinatorial Laplacian $\mathcal{L}$. 
		If $u=e^{-t\mathcal{L}}f$, where $f \in \ell^1(w_p)$, $p \in [1, \infty]$, or $f$ is radial and 
		belongs to $\ell^1_{\delta_p}(\mathcal{T})$, then
		\[ 
		\lim_{t \to \infty} 
		\frac{1}{\|h_t\|_{\ell^p}}
		\| u(t;\cdot)-M_p(f)(\cdot)\, h_t\|_{\ell^p}=0.
		\]
	\end{thmx}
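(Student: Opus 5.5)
The strategy is a density argument combined with Theorem~\ref{thm:thmA}: first prove the limit for compactly supported $f$ (in fact for $f=\delta_y$ with $y$ fixed), then extend to all of $\ell^1(w_p)$ or $\ell^1_{\delta_p}$ by a uniform estimate. Write $u(t,x)=\sum_y f(y)h_t(y^{-1}x)$, where $h_t(y^{-1}x)$ means $h_t$ evaluated at distance $d(x,y)$. Then
\[
u(t,x)-M_p(f)(x)h_t(x)=\sum_y f(y)\bigl(h_t(d(x,y))-m_p(y,x)h_t(|x|)\bigr).
\]
Here $m_p(y,x)$ is the single-point mass: for $p<2$ it is the boundary average of $q^{\frac1p h_\omega(y)}$ over $\Omega(o,x)$, and for $p\ge2$ it is $\varphi_0(d(x,y))/\varphi_0(x)$. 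Linearity therefore reduces the problem to two steps. (i) For each fixed $y$,
\[
\|h_t(d(\cdot,y))-m_p(y,\cdot)h_t\|_{\ell^p}=o(\|h_t\|_{\ell^p}).
\]
(ii) The bound
\[
\|h_t(d(\cdot,y))-m_p(y,\cdot)h_t\|_{\ell^p}\le C\,w_p(y)\|h_t\|_{\ell^p}
\]
holds uniformly in $t$, with $\varphi_{i\delta_p}(y)$ in place of $w_p(y)$ in the radial case. Given (i) and (ii), dominated convergence over $y$ gives the theorem, and it also shows why the weights $w_p$ appear.

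For (ii) I would use $\|h_t(d(\cdot,y))\|_p=\|h_t\|_p$ together with $|m_p(y,x)|\le C q^{|y|/p}$ for $p<2$ and $\varphi_0(d(x,y))/\varphi_0(x)\le C(1+|y|)q^{|y|/2}$ for $p\ge2$. In the second case the polynomial factor must be absorbed, either by taking a slightly sharper estimate or by working in the radial class with $\varphi_0(y)$. In the radial case I would average over $K$, the stabilizer of $o$, and use the product formula $\int_K \varphi_\lambda(d(x,ky))\,dk=\varphi_\lambda(x)\varphi_\lambda(y)$ to obtain the constant masses.

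For (i), the first task is to locate where $\|h_t\|_p$ concentrates, using Theorem~\ref{thm:thmA}. On the sphere of radius $r$ there are $\asymp q^r$ points, so the $\ell^p$ mass is
\[
\sum_r q^r\, r^p q^{-pr/2}\bigl(h^{\mathbb Z}_{\gamma(0)t}(r)\bigr)^p e^{-p(1-\gamma(0))t}t^{-p}.
\]
For $p<2$ the factor $q^{r(1-p/2)}$ combined with the Gaussian-type large deviation form of $h^{\mathbb Z}$ pushes the mass to $r\approx c_p t$, with $c_p>0$ and Gaussian fluctuations of size $\sqrt t$. For $p>2$ the sum concentrates at $r=O(\sqrt t)$, where the second formula of Theorem~\ref{thm:thmA} applies; $p=2$ is borderline, with mass near $r\sim\sqrt t$. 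For $p=\infty$ the supremum is attained near $o$. In each regime I would compare $h_t(d(x,y))$ with $h_t(|x|)$ for $y$ fixed and $x$ in the concentration region.

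For $p<2$, take $x$ in $\Omega(o,\cdot)$ directions with $|x|\sim c_pt$. Then $d(x,y)=|x|+h_\omega(y)$ up to the finitely many $x$ whose geodesic branches near $y$; here $h_\omega(y)$ is the Busemann function, with sign convention matching the paper's $h_\omega$. The asymptotic of Theorem~\ref{thm:thmA}, $h_t(r)\approx(\cdots)(1+r)q^{-r/2}h^{\mathbb Z}_{\gamma t}(r+1)$, gives
\[
\frac{h_t(r+k)}{h_t(r)}\to q^{-k/2}\,\alpha^{-k},
\]
where $\alpha$ is the local exponential rate of $h^{\mathbb Z}$ at speed $r/t=c_p$. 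A key check is that at the optimal speed $c_p$ this ratio equals $q^{k/p}$; this follows from the saddle point condition defining $c_p$, and it produces exactly the factor $q^{\frac1p h_\omega}$. Since $x$ itself lies deep along a ray, the boundary average over $\Omega(o,x)$ is what survives once $x$ is not yet exactly on a single ray. For $p>2$, use $h_t(x)\approx C t^{-3/2}e^{-(1-\gamma)t}\varphi_0(x)$ uniformly on $|x|\le\rho(t)$: the ratio becomes $\varphi_0(d(x,y))/\varphi_0(x)$ exactly, and the tails outside the window are negligible. The main obstacle is making the regime $p<2$, and the borderline $p=2$, uniform: one must show that the ratio converges uniformly on a window carrying $1-\varepsilon$ of the $\ell^p$ mass, with errors $o(1)$ relative to the heat kernel rather than additive $o(1)$. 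I would first try to re-derive Theorem~\ref{thm:thmA} in the relative form $h_t(r+k)/h_t(r)=\Phi_t(r,k)(1+o(1))$, uniformly for $r/t$ in compacts and $|k|\le K$, directly from the spherical inversion integral via a shift of contour.
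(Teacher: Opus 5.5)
Your handling of finitely supported data follows the paper. You split $\mathcal{T}$ into the $\ell^p$ concentration window of Proposition~\ref{prop:MedSe_ellp} and its complement, and inside the window you use uniform ratio asymptotics. The ``main obstacle'' you name is exactly what Lemma~\ref{lemma:quotient_prep} and Propositions~\ref{ratio_asym_L1}, \ref{prop:ratio_asym_L2} and \ref{prop:heat_asymp_p>2} provide. The paper gets them from the $\mathbb{Z}$-kernel representation and the mean value theorem for $\zeta$, not from a contour shift; at $s_0=\gamma(0)/R_p$ one checks $\sqrt q\,(1+\sqrt{1+s_0^2})/s_0=q^{1/p}$.

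Two small corrections. With the paper's convention $d(x,y)=|x|-h_\omega(y)$, and the ratio is $q^{-k/p}$; your two sign slips cancel. For $|x|>|y|$ one has $h_\omega(y)=|x|-d(x,y)$ for every $\omega\in\Omega(o,x)$, so the boundary average is just a constant there.

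Your dominated-convergence reduction over $y$ is a clean substitute for the paper's $\varepsilon/3$ argument, and it works for $f\in\ell^1(w_p)$. For $p\ge 2$ you need the sharp bound $\varphi_0(d(x,y))\le q^{|y|/2}\varphi_0(x)$ from \eqref{eq:phi_quot_bdd}, not the one with a factor $1+|y|$.

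The genuine gap is the radial class $\ell^1_{\delta_p}(\mathcal{T})$. Step (ii) with $\varphi_{i\delta_p}(y)$ in place of $w_p(y)$ cannot hold for an individual $y$: $\|h_t(d(\cdot,y))\|_{\ell^p}=\|h_t\|_{\ell^p}$, while $\varphi_{i\delta_p}(y)\to 0$ as $|y|\to\infty$ for $p>1$. So you must average over spheres, and the product formula then only identifies the constant mass. What dominated convergence actually needs is the uniform estimate $\|\sigma_n\ast h_t\|_{\ell^p}\lesssim\varphi_{i\delta_p}(n)\|h_t\|_{\ell^p}$, where $\sigma_n$ is the normalized sphere measure. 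This is not soft. A radial $f\in\ell^1_{\delta_p}$ need not be in $\ell^1$ (for $1<p<2$ the condition is $\sum_n q^{n/p}|f(n)|<\infty$), so Minkowski/Young is unavailable.

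For $p\in[1,2]$ the missing ingredient is Herz's \emph{principe de majoration}: $\|\ast k\|_{\ell^p\to\ell^p}=\mathcal{H}k(i\delta_p)$ for positive radial $k$. This is what the paper uses.

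For $p>2$ no operator-norm argument can work. By duality $\|\ast\sigma_n\|_{\ell^p\to\ell^p}=\varphi_{i(\frac12-\frac1p)}(n)\asymp q^{-n/p}\gg\varphi_0(n)$, and there are radial $f\in\ell^1_0$ with $\ast|f|$ unbounded on $\ell^p$. You must exploit the structure of $h_t$, in one of two ways:
\begin{itemize}
\item The paper writes $h_t=h_{t/2}\ast h_{t/2}$ and applies the Kunze--Stein bound $\ell^2\ast\ell^2_{\mathrm{rad}}\subseteq\ell^p$. Together with $\|h_{t/2}\|_{\ell^2}^2=h_t(o)\asymp\|h_t\|_{\ell^p}$, this reduces to the case $p=2$.
\item Alternatively, Proposition~\ref{heatkernel}(iii) gives $h_t\lesssim\|h_t\|_{\ell^\infty}\varphi_0$ for $t\ge 1$. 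Hence $\sigma_n\ast h_t\lesssim\varphi_0(n)\|h_t\|_{\ell^\infty}\varphi_0$, and $\varphi_0\in\ell^p$ for $p>2$.
\end{itemize}
Either way, you need an input beyond those you list to cover the radial case.
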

	The main class of functions in $\ell^1(w_p)$ are those that are finitely supported. Indeed, the result for the other classes follows by density arguments; notice that
	\[
	\text{finitely supported}\subseteq \ell^{1}_{w_p}\subseteq \ell^{1},
	\]
	but 
	\[
	\text{finitely supported radial }\subseteq \ell^{1}_{\text{rad}}\subseteq \ell^{1}_{i\delta_p} \text{ radial}.
	\]
	It is worth mentioning that for radial functions in $\ell^1(\mathcal{T})$
	we have 
	\[
	M_1(f) \equiv \sum_{y \in \mathcal{T}} f(y),
	\]
	which is reminiscent of \eqref{eq:conv_Rn}, and it is the discrete-space, continuous-time analog of results on hyperbolic space or Riemannian symmetric spaces of the non-compact type (continuous time-space) or random walks on affine buildings (discrete time-space). The crucial ingredient in all these non-positively curved settings seems to be a careful analysis of the heat kernel, see e.g. \cite{AJ99, APZ, P1, PT, T}, especially in the so-called \textit{heat concentration regions}, where the interplay of the geometry versus the fast decay in space of the heat kernel becomes apparent. It is now that Theorem \ref{thm:thmA} becomes essential. Remarkably, we obtain here the same mass functions as for (isotropic, aperiodic, irreducible, finite-range) random walks on homogeneous trees in \cite{PT}: the transition densities for discrete time, as well as their concentration regions, seem to memorize information for the walk, but the mass functions do not, see e.g. \cite{PT, T}.

	The case of $\mathbb{Z}$ is also discussed here for comparison: for the integers, the constant $M=M(f)=\sum_{\mathbb{Z}}f$ works for all $p\in[1,\infty)$ and all $f\in \ell^1$. This complies with the result \eqref{eq:conv_Rn} on $\mathbb{R}^n$; see also \cite{MedSe} for a comparison between the heat kernel on $\mathbb{Z}$ and the heat kernel on a homogeneous tree with $q\geq2$, in terms of heat concentration.

This paper is organized as follows: In Section \ref{sec:1}, we present some preliminaries on homogeneous trees and the
corresponding harmonic analysis. In Section \ref{sec:2} we discuss the heat kernel on homogeneous trees, its connection with the heat kernel on $\mathbb{Z}$,  as
well as $\ell^p$ heat concentration on critical regions. Section \ref{sec:3} is devoted to obtaining asymptotics for the heat kernel, hence containing the proof of Theorem \ref{thm:thmA}. It also contains asymptotics for ratios of heat kernels in certain regions, as an indispensable technical tool for the proof of Theorem \ref{thm:thmB}, which deals with the asymptotic behavior of caloric functions, that is, solutions to the heat equation. To this end, in Section \ref{sec:Z}, we first juxtapose with the long-time convergence result for the heat equation on the integers, where the result is essentially Euclidean. Finally, Section \ref{sec:T} deals with the
asymptotic behavior of caloric functions on homogeneous trees of exponential volume growth. We introduce mass functions for $p\in [1, \infty]$, and we discuss
their properties. We next show that for finitely supported initial data, the corresponding solutions to the heat equation, converge in the $\ell^p$
critical region to the mass function times the heat kernel as time grows to infinity. On the other hand, we
show that both the solution and the heat kernel vanish asymptotically outside the critical regions. In the last
subsection, we discuss these problems for more general initial data in the $\ell^p$, $p\in [1, \infty]$ setting, thus completing the proof of Theorem \ref{thm:thmB}.

   Throughout this paper, we use the convention that $c, C...$
stand for a generic positive
constant whose value can change from line to line. The notation $A\lesssim B$ between two positive expressions
means that there is a constant $c>0$ such that $A\leq c \, B$. The notation $A\asymp B$ means that $A\lesssim B$ and $B\lesssim A$. Finally, we write $x_j\sim y_j$ if $\lim_{j}\frac{x_j}{y_j}=1$.

	\section{Homogeneous trees}\label{sec:1}
	A homogeneous tree\index{homogeneous tree} $\mathcal{T}$ of degree $q+1$ is an infinite connected graph with no loops, in which every vertex is adjacent to $q+1$ other vertices. When $q=1$, a homogeneous tree can be identified with the set of integers. From now on, we assume that $q\geq 2$ unless specified, and we identify $\mathcal{T}$ with its set of vertices. For more details, see \cite{CMS0, CMS, Figa}.
	
	A homogeneous tree carries a natural distance $d$ and a natural measure $\mu$. Specifically, $d(x,y)$ is the number of edges of the shortest path joining $x$ to $y$ and $\mu$ is the counting measure. For the counting measure, the volume of any sphere $S(x,n)$ in $\mathcal{T}$ is given by
	\[
	|S(x,n)|=\begin{cases}
		1, & \text{if } n=0\\
		(q+1)q^{n-1}, & \text{if } n\in \mathbb{N}.
	\end{cases}
	\]

	Let us fix a base point $o$ and set $|x|=d(x,o)$. Functions depending only on $|x|$ are called radial. By a slight abuse of notation, we shall often write $f(x)=f(|x|)$ for a radial function. 
	We may define the convolution of two functions $f_1, f_2$ on $\mathcal{T}$,  provided $f_2$ is radial, by
	\begin{equation}\label{convradial}
		(f_1\ast f_2)(x)=\sum\limits_{n\geq 0}f_2(n)\sum\limits_{d(x,y)=n}f_1(y).
	\end{equation}
	Finally, we denote by  $\ell^p(\mathcal{T})$ the Lebesgue spaces  associated to the measure $\mu$, with norms given by
	\[
	\|f\|_{\ell^p(\mathcal{T})}=
	\begin{cases}
		\left(\int\limits_{\mathcal{T}}|f(x)|^pdx \right)^p=\left(\sum\limits_{x\in\mathcal{T}}|f(x)|^pdx \right)^p, & \text{if } 1\leq p<\infty\\
		\sup\limits_{x\in\mathcal{T}}|f(x)|, & \text{if } p=\infty.
	\end{cases}
	\]
	
	\subsection{Boundary, Poisson Kernel and Harmonic Analysis}
	A useful reference for this section in \cite{CMS0}. An infinite geodesic ray $\omega$ on $\mathcal{T}$ is a one-sided sequence $\{\omega_n, \; n= 0, \, 1, \, 2,  ...\}$ where the $\omega_n$ are in $\mathcal{T}$. Two infinite geodesic rays $\omega=\{\omega_n: \; n= 0, \, 1, \, 2,  ...\}$ and $\omega'=\{\omega_n': \; n= 0, \, 1, \, 2,  ...\}$
	are said to be equivalent if there exist natural numbers $n$ and $m$ such that $\omega_i=\omega_{i+m}'$ for all $i$ greater than $n$. This identification is an equivalence relation and partitions the set of all infinite geodesic rays into disjoint classes. In every such
	equivalence class there exists a unique geodesic ray which starts from $o$.
	
	The boundary of $\mathcal{T}$ is the set of all infinite geodesic rays starting at $o$ and will be denoted by $\Omega$. For any 
		$x \in \mathcal{T}$ and $\omega \in \Omega$ there is a unique geodesic ray, denoted by $[x, \omega]$, which has base vertex $x$ and represents 
		$\omega$. For $x \in \mathcal{T}$, we set
		\[
		\Omega(o, x) = \big\{\omega \in \Omega : x \in [o, \omega]\big\}.
		\]
		The boundary carries
		the harmonic measure $\nu$ centered at $o$, such that $\nu(\Omega(o, x))$ depends only on the distance $d(x,o)$. 
	
	Given $\omega, \omega' \in \Omega$, we define $c(\omega, \omega')$ to be the confluence point of $\omega$ and $\omega'$, that is,
	the last common point between the infinite geodesics $\omega=\{\omega_n: \; n= 0, \, 1, \, 2,  ...\}$ and $\omega'=\{\omega_n': \; n= 0, \, 1, \, 2,  ...\}$. Similarly for $x\in \mathcal{T}$ and $\omega\in \Omega$ we define $c(x, \omega) = x_l$ to be the last point lying on $\omega$ in the geodesic path $\{o, x_1, , ..., x\}$ joining $o$ to $x$.

	The	Poisson kernel $p(x,\omega)$ (which can be viewed as the Radon-Nikodym derivative of a probability measure defined on the boundary) is given by
	\begin{equation}
		p(x,\omega)=q^{h_{\omega}(x)}, \quad \forall x\in \mathcal{T}, \, \forall \omega \in \Omega,
	\end{equation}	
	where the height $h_{\omega}(x)$ is defined by
	$$h_{\omega}(x) = 2|c(x,\omega)| -|x|.$$ In other words, we fix a geodesic ray $\omega = \{x_m : m \in \mathbb{N}\}$ in $\mathcal{T}$, and consider the associated height function $h_\omega$, which is the discrete analogue of the Busemann function in Riemannian geometry, defined by
\[
h_\omega(x) = \lim_{y\to\omega}(d(o,y)-d(x,y))= \lim_{m \to \infty} \bigl( m - d(x, x_m) \bigr).
\]

	From the explicit formula above, it turns out that the Poisson kernel is a non-negative function on $\mathcal{T}\times \Omega$ which, for any fixed $x$, takes only finitely many values as a function of $\omega$. More precisely, for every fixed $x$, the quantity $c(x, \omega)$ can range from $0$ to $|x|$ and consequently 	\begin{equation}\label{eq:Busemann_bds}
		-|x|\leq h_{\omega}(x)\leq |x| \quad \forall \omega \in \Omega.
	\end{equation}
More generally, see \cite[pp.34--35]{Figa}, one can introduce the function $h(x,y;\omega)$ which is defined
as the distance of the $\omega$-horocycle $H(x)$  containing $x$ and the $\omega$-horocycle $H(y)$ containing $y$, taken with the positive sign if
$H(y)$ is closer to $\omega$ than $H(x)$, and with the negative sign
otherwise. With this notation, 
\begin{equation}\label{eq:15}
	\frac{d\nu_y}{d\nu_x}( \omega)=q^{h(x,y;\omega)}.
\end{equation}
 Then for all $x, y, z \in \mathcal{T}$ and $\omega \in \Omega$, we have the cocycle relation
\begin{equation}
	\label{eq:52}
	h(x, y; \omega) = h(x, z; \omega) + h(z, y; \omega).
\end{equation}

	\subsection{The spherical Fourier transform on homogeneous trees}\label{sec:FT}
	Our main aim in this section is to define the notion of a spherical Fourier transform and its inverse on a homogeneous tree $\mathcal{T}$. For more details, see \cite{CMS0, CMS, Figa, VECA}.
	
	Let $\mathcal{M}$ be the mean operator
	\begin{equation*}
		(\mathcal{M}f)(x)=\frac{1}{q+1}\sum\limits_{y\in \mathcal{T},\; d(x,y)=1}f(y).
	\end{equation*} Then, the Laplacian $\mathcal{L}$ on $\mathcal{T}$ is defined by
	\[\
	\mathcal{L}=I-\mathcal{M};
	\]
	it is easily seen to be bounded on $L^p(\mathcal{T})$ for every $p$ in $[1, \infty]$, and self-adjoint on $L^2(\mathcal{T})$. The real segment $[1-\gamma(0),1+\gamma(0)]$, $\gamma(0)=2\sqrt{q}/(q+1)$, coincides with the $\ell^2$ spectrum of $\mathcal{L}$.
	
	The \textit{spherical function} $\varphi_\lambda$ of index $\lambda \in \mathbb{C}$ is the unique radial eigenfunction of the operator $\mathcal{L}$, which is associated to the eigenvalue
	\begin{equation}\label{gammalambda0}
		\gamma(\lambda)=\frac{q^{i\lambda}+q^{-i\lambda}}{q^{\frac{1}{2}}+q^{-\frac{1}{2}}}=\gamma(0)\cos(\lambda \log q),
	\end{equation} and which is normalized by $\varphi_\lambda(o)=1$.  Set 
	$$\tau=\frac{2\pi}{\log q}.$$ 
	 The spherical functions have the following explicit expressions:
	\begin{equation} \label{eq:sphericalT}
		\varphi_\lambda(x)=
		\begin{cases}
			\left( 1+\frac{q-1}{q+1}|x| \right)q^{-|x|/2} & \text{if } \lambda \in \tau\mathbb{Z},\\
			\left( 1+\frac{q-1}{q+1}|x| \right)q^{-|x|/2}(-1)^{|x|} & \text{if } \lambda \in \frac{\tau}{2}+\tau\mathbb{Z},\\
			\textbf{c}(\lambda)q^{( -\frac{1}{2}+i\lambda)|x|}+\textbf{c}(-\lambda)q^{( -\frac{1}{2}-i\lambda)|x|} & \text{if } \lambda \notin (\frac{\tau}{2})\mathbb{Z}, \\
		\end{cases}
	\end{equation}
	where $\textbf{c}$ is the meromorphic function
	\begin{equation} \label{cfunc}
		\textbf{c}(z)=\frac{1}{q^{1/2}+q^{-1/2}} \frac{q^{1/2+iz}-q^{-1/2-iz}}{q^{iz}-q^{-iz}}, \quad \forall z \in \mathbb{C} \backslash (\frac{\tau}{2})\mathbb{Z}.
	\end{equation}
	Note that $\varphi_\lambda$ is periodic with period $\tau$, $\varphi_{\lambda}=\varphi_{-\lambda}$ and that $\varphi_{i/2}\equiv 1$. An integral representation of these elementary spherical functions, see \cite[p.18]{CMS0} or \cite{Figa}, is given by \begin{equation}\label{eq:integralrepspherical}
		\varphi_{\lambda}(x)=\int_{\Omega} p^{\frac{1}{2}+i\lambda}(x,\omega) \,d\nu(\omega), \quad x\in \mathcal{T}.
	\end{equation}
	
	The \textit{spherical Fourier transform}\index{spherical Fourier transform} of a radial function $f\in \ell^1(\mathcal{T})$ is defined by
	\[\
	\mathcal{H}f(\lambda)=\sum\limits_{x\in \mathcal{T}}f(x)\varphi_\lambda(x)=f(0)+\sum\limits_{n\geq 1}(1+q)q^{n-1}f(n)\varphi_\lambda(n), \quad  \lambda \in \mathbb{C}.
	\]
	Since $\varphi_\lambda=\varphi_{-\lambda}$ and $\varphi_{\lambda+\tau}=\varphi_{\lambda}$, $\mathcal{H}f$ is even and $\tau$-periodic. Then, the following inversion formula holds true:
	\begin{equation*} 
		f(x)=\frac{q\log q}{4\pi(q+1)}\int_{-\tau /2}^{\tau /2}\mathcal{H}f(\lambda)\varphi_{\lambda}(x)\frac{d \lambda}{|\textbf{c}(\lambda)|^2}, \quad  x\in \mathcal{T},
	\end{equation*}		
	where 
	\begin{equation}\label{eq:Planchereldens}
		|\mathbf{c}(\lambda)|^{-2}=\frac{4(q+1)^2 \sin ^2(\lambda \log q)}{(q+1)^2 \sin ^2(\lambda \log q)+(q-1)^2 \cos ^2(\lambda \log q)}, \quad \ \lambda \in [-\tau/2, \tau/2).
	\end{equation}

	\subsection{The Helgason-Fourier transform}
	The Helgason-Fourier transform $\widehat{f}$ of a finitely supported function $f$ on $\mathcal{T}$ is a function on $\mathbb{C} \times \Omega$ 
	defined by the formula
	\begin{equation}\label{eq:HFtransform}
	\widehat{f}(\lambda,\omega)=\sum_{x\in \mathcal{T}}f(x)\,p^{\frac{1}{2}+i\lambda}(x, \omega).
\end{equation}
	It is clear that $\widehat{f}(\lambda,\omega)=\widehat{f}(\lambda+\tau,\omega)$ for every $\lambda\in \mathbb{C}$. If $f$ is radial, then its Helgason-Fourier transform becomes independent of the variable $\omega$
	and
	$$\widehat{f}(\lambda,\omega)=\mathcal{H}f(\lambda)=\sum_{x\in \mathcal{T}}f(x)\varphi_{\lambda}(x),$$
	that is, the Helgason-Fourier transform reduces to the spherical Fourier transform (see also \cite[p.18]{CMS0}). 
	
	\section{The heat kernel on homogeneous trees}\label{sec:2}
	We now discuss the notion of the continuous-time heat kernel on homogeneous trees. For more details, see \cite{CMS, MedSe}.

   The heat semigroup generated by the Laplacian $\mathcal{L}$ is denoted by $(e^{-t\mathcal{L}})_{t> 0}$. Since $\mathcal{L}$
is bounded on $\ell^p(\mathcal{T})$ whenever $p\geq 1$ and $t > 0$, $e^{-t\mathcal{L}}$ is given by the formula
\[ e^{-t\mathcal{L}}=\sum_{n=0}\frac{(-t\mathcal{L})^n}{n!}; \]
the series converges in the uniform operator topology, and $(e^{-t\mathcal{L}})_{t> 0}$ is a uniformly
continuous semigroup on $\ell^p(\mathcal{L})$. At the kernel level, the heat kernel is $h_t(x,y)=e^{-t\mathcal{L}}\delta_y(x)$, while, for any $f\in \ell^p(\mathcal{T})$, $1\leq p \leq \infty$, we have
$$e^{-t\mathcal{L}}f(x)=\sum_{y\in \mathcal{T}}h_t(x,y)f(y)=f\ast h_t(x),$$
namely the norm analytic solution in $\ell^p(\mathcal{T})$ of the heat equation $$(\partial_t+L)u=0, \, \forall (x,t)\in \mathcal{T}\times (0, +\infty), \quad u(x,0)=f(x).$$
The self-adjointness of $e^{-t\mathcal{L}}$ yields that the heat kernel is symmetric in $x,y$; the identity $e^{-t\mathcal{L}}1=1$ gives then
\[
1=\sum_{y\in \mathcal{T}}h_t(x,y)=\sum_{x\in \mathcal{T}}h_t(x,y)= \sum_{x\in \mathcal{T}}h_t(y,x) \quad \forall t\geq 0, \, y\in \mathcal{T},
\]
showing that heat is conserved.

	The heat kernel $h_t$ is given as the inverse spherical Fourier transform (\ref{inversionT}) of the function 
	\begin{equation*}
	m_t(\lambda)=e^{-t(1-\gamma(\lambda))},
	\end{equation*} where $\gamma(\lambda)$ is defined on (\ref{gammalambda0}). By evenness, it holds 
	\begin{equation}\label{inversionT} 
		h_t(x)=\frac{q\log q}{2\pi(q+1)}\int_{0}^{\tau /2}e^{-t(1-\gamma(\lambda))}\varphi_{\lambda}(x)\frac{d \lambda}{|\textbf{c}(\lambda)|^2}, \quad  x\in \mathcal{T}.
	\end{equation}
From this expression, it is clear that the heat kernel is a radial function, so we shall slightly abuse notation and write $h_t(|x|)$ or $h_t(d(x,o))$ as well.
	
	Formulas for the heat kernel on $\mathcal{T}$ are closely related to those for the heat kernel on the set of integers $\mathbb{Z}$ (which can be identified with a homogeneous tree with $q=1$). More precisely, the Laplacian on the integers $\mathbb{Z}$ is given by 
	\[
	\mathcal{L}^{\mathbb{Z}}f(j)=f(j)-\frac{f(j+1)+f(j-1)}{2}, 
	\]
	and the corresponding heat kernel $h_t^{\mathbb{Z}}$ is given by
	\begin{align*}
		h_t^{\mathbb{Z}}(j)=\frac{e^{-t}}{2\pi}\int_{-\pi /2}^{\pi/2}e^{t\cos s}\cos (sj)ds=e^{-t}I_{|j|}(t), 
	\end{align*}
	for every $j\in \mathbb{Z}$ and $t>0$; here we have the modified Bessel function of imaginary
argument. It is clear that $h_t^{\mathbb{Z}}$ has sum equal to $1$ as well (i.e., it is a probability measure on $\mathbb{Z}$). The following lemma by Cowling, Meda and Setti, describes the behavior of $h_t^{\mathbb{Z}}$.
	
	\begin{theorem}\label{thm:heat Z asym}(\cite[Theorem 2.3]{CMS})
		Let $\xi:\mathbb{R}^{+}\rightarrow \mathbb{R}$, $F:\mathbb{Z}\times \mathbb{R}^{+}\rightarrow \mathbb{R}$ denote the functions defined by the rules 
		$$\xi(z)=(1+z^2)^{1/2}+\log\left( \frac{z}{1+(1+z^2)^{1/2}}\right)$$
		and 
		$$F(j,t)=\begin{cases}
			(2\pi)^{-1/2}\frac{\exp\{-t+|j|\,\xi(t/|j|)\}}{(1+j^2+t^2)^{1/4}}, & \text{if } j\neq 0\\
			(2\pi)^{-1/2}(1+t^2)^{-1/4}, & \text{if } j=0.
		\end{cases}$$
		Then 
		$$h_t^{\mathbb{Z}}(j)\asymp F(j,t) \text{ for all } j\in \mathbb{Z}, \text{ for all } t>0,$$ and 
		$$h_t^{\mathbb{Z}}(j)\sim F(j,t) \text{ as } j^2+t^2 \text{ tends to } \infty.$$
	\end{theorem}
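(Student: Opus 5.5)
The plan is to reduce everything to classical uniform asymptotics for the modified Bessel functions $I_\nu$, using $h_t^{\mathbb{Z}}(j)=e^{-t}I_{|j|}(t)$. The key input is Olver's uniform (Debye-type) expansion. As $\nu\to\infty$, uniformly for $z\in(0,\infty)$,
\[
I_\nu(\nu z)=\frac{e^{\nu\eta(z)}}{(2\pi\nu)^{1/2}(1+z^2)^{1/4}}\bigl(1+O(\nu^{-1})\bigr),\qquad \eta(z)=(1+z^2)^{1/2}+\log\frac{z}{1+(1+z^2)^{1/2}}.
\]
Note that $\eta=\xi$. Substitute $\nu=|j|\ge1$ and $z=t/|j|$. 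Then $\nu\eta(z)=|j|\,\xi(t/|j|)$ and $(2\pi\nu)^{1/2}(1+z^2)^{1/4}=(2\pi)^{1/2}(j^2+t^2)^{1/4}$. Since $(1+j^2+t^2)/(j^2+t^2)\to1$ when $j\ne0$ and $j^2+t^2\to\infty$, this gives $h_t^{\mathbb{Z}}(j)\sim F(j,t)$ whenever $|j|\to\infty$, uniformly in $t>0$.

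It remains to treat the case where $j^2+t^2\to\infty$ with $|j|$ bounded, i.e.\ $t\to\infty$ with $j$ ranging over a finite set. For each fixed integer $j$ we have $I_{j}(t)=\frac{e^t}{\sqrt{2\pi t}}(1+O(1/t))$. On the other side, a Taylor expansion gives
\[
|j|\,\xi(t/|j|)=(j^2+t^2)^{1/2}+|j|\log\frac{t}{|j|+(j^2+t^2)^{1/2}}=t-\frac{j^2}{2t}+O(t^{-2}),
\]
so $e^{-t+|j|\xi(t/|j|)}\to 1$. Also $(1+j^2+t^2)^{1/4}\sim t^{1/2}$, and the same holds for the $j=0$ formula. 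Hence $h_t^{\mathbb{Z}}(j)\sim F(j,t)$ for each fixed $j$ as $t\to\infty$. Finitely many $j$ make this uniform. Combining with the previous paragraph by a subsequence argument (either $|j|\to\infty$ along a subsequence, or $j$ is eventually in a finite set) yields $h_t^{\mathbb{Z}}(j)\sim F(j,t)$ as $j^2+t^2\to\infty$.

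For the two-sided bound $\asymp$ on all of $\mathbb{Z}\times\mathbb{R}^+$, the asymptotic equivalence already gives comparability outside a set $\{|j|\le N,\ 0<t\le T\}$. On the finitely many $j$ in this set, $t\mapsto h_t^{\mathbb{Z}}(j)/F(j,t)$ is continuous and positive on $(0,T]$, so only the limit $t\to0^+$ needs care. There $I_j(t)\sim (t/2)^{|j|}/|j|!$. Meanwhile $|j|\xi(t/|j|)=|j|+|j|\log\bigl(t/(2|j|)\bigr)+o(1)$, so $F(j,t)\sim(2\pi)^{-1/2}(e t/(2|j|))^{|j|}$. The ratio therefore tends to the positive constant $(2\pi)^{1/2}|j|^{|j|}e^{-|j|}/|j|!$, which is bounded above and below for $|j|\le N$ (and for $j=0$ both sides tend to positive constants).

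The main obstacle is the uniformity in the first step: one must make sure Olver's expansion is uniform over the whole range $z\in(0,\infty)$, including both $z\to0$ (small time, large distance) and $z\to\infty$. I would cite Olver's \emph{Asymptotics and Special Functions}, Ch.~10, \S7, where this uniformity is established, rather than rederive it by a saddle-point analysis of $\frac1{2\pi}\int e^{t\cos s}\cos(js)\,ds$.
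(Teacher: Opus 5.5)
Your proposal is correct. The paper gives no proof of this statement: it is quoted from Cowling--Meda--Setti \cite{CMS}, so there is no internal argument to compare with.

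Your proof proceeds in four steps, and each one holds:
\begin{itemize}
\item[(a)] Olver's expansion of $I_\nu(\nu z)$, uniform in $0<z<\infty$, handles $|j|\to\infty$ uniformly in $t$. The uniformity you flag is indeed supplied by Olver's explicit error bounds.
\item[(b)] For bounded $j$ and $t\to\infty$, the expansion $I_j(t)=e^t(2\pi t)^{-1/2}(1+O(t^{-1}))$ together with your computation $|j|\xi(t/|j|)=t-j^2/(2t)+O(t^{-2})$ gives the asymptotics.
\item[(c)] A subsequence argument combines (a) and (b).
\item[(d)] Compactness plus the small-$t$ behaviour $I_n(t)\sim (t/2)^n/n!$ gives the global $\asymp$.
\end{itemize}
The function $\xi$ in the statement is exactly the function Olver himself denotes by $\xi$ in his treatment of the uniform expansion. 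This strongly suggests \cite{CMS} argue along the same lines.

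There is one small slip in the last step: you dropped the denominator of $F$. For $j\neq 0$ and $t\to 0^+$ one has $(1+j^2+t^2)^{1/4}\to(1+j^2)^{1/4}$, so
\[
F(j,t)\sim(2\pi)^{-1/2}(1+j^2)^{-1/4}\Bigl(\frac{et}{2|j|}\Bigr)^{|j|},
\]
and the limiting ratio is $(2\pi)^{1/2}(1+j^2)^{1/4}|j|^{|j|}e^{-|j|}/|j|!$. This does not affect your conclusion, since it is still a positive finite constant for each of the finitely many $j$ involved. With the corrected factor, Stirling's formula shows this constant tends to $1$ as $|j|\to\infty$, consistent with step (a). Your version would instead tend to $0$, which is a useful check that a factor was lost.
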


	We now return to the heat kernel on trees. The result in \cite{CMS}, given below, connects $h_t$ with $h_t^{\mathbb{Z}}$.
	\begin{proposition}\label{heatkernel}(\cite[Proposition 2.5]{CMS})
		The following hold for every $t>0$ and $x\in \mathcal{T}$:
		
		(i) $h_t(x)=e^{-(1-\gamma(0))t}q^{-|x|/2}\sum_{k=0}^{\infty}q^{-k}\left[h_{t\gamma(0)}^{\mathbb{Z}}(|x|+2k)-h_{t\gamma(0)}^{\mathbb{Z}}(|x|+2k+2)  \right],$
		
		(ii) $h_t(x)=\frac{2e^{-(1-\gamma(0))t}}{\gamma(0)t}q^{-|x|/2}\sum_{k=0}^{\infty}q^{-k}\,(|x|+2k+1)\,h_{t\gamma(0)}^{\mathbb{Z}}(|x|+2k+1),$
		
		(iii) $h_t(x)\asymp \frac{e^{-(1-\gamma(0))t}}{t}(1+|x|)\,q^{-|x|/2}\,h_{t\gamma(0)}^{\mathbb{Z}}(|x|+1).$
	\end{proposition}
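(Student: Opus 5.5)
The plan is to prove (i) directly from the inversion formula \eqref{inversionT}, and then to derive (ii) and (iii) from (i) using elementary Bessel function identities. All three statements concern a radial function, so throughout I write $n=|x|$ and $s=t\gamma(0)$.

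For (i), I start from \eqref{inversionT} and use the fact that $\gamma(\lambda)$ and $|\mathbf{c}(\lambda)|^{-2}$ are even. This lets me rewrite the integral over $[-\tau/2,\tau/2]$, with the constant $\frac{q\log q}{4\pi(q+1)}$. For $\lambda\notin\frac{\tau}{2}\mathbb{Z}$, the formula \eqref{eq:sphericalT} together with $|\mathbf{c}(\lambda)|^2=\mathbf{c}(\lambda)\mathbf{c}(-\lambda)$ (valid for real $\lambda$) gives
\[
\frac{\varphi_\lambda(n)}{|\mathbf{c}(\lambda)|^{2}}=\frac{q^{(-\frac12+i\lambda)n}}{\mathbf{c}(-\lambda)}+\frac{q^{(-\frac12-i\lambda)n}}{\mathbf{c}(\lambda)}.
\]
The two terms are exchanged by $\lambda\mapsto-\lambda$, so the integral equals twice the integral of the first term. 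Using \eqref{cfunc}, I write
\[
\frac{1}{\mathbf{c}(-\lambda)}=(q^{1/2}+q^{-1/2})\,\frac{q^{-i\lambda}-q^{i\lambda}}{q^{1/2-i\lambda}-q^{-1/2+i\lambda}}=(q^{1/2}+q^{-1/2})\,q^{-1/2}\,(1-q^{2i\lambda})\sum_{k\ge0}q^{-k}q^{2ik\lambda}.
\]
The geometric series converges uniformly in real $\lambda$ because $q\ge2$, so I may integrate term by term. Next I substitute $\theta=\lambda\log q$, which maps $[-\tau/2,\tau/2]$ onto $[-\pi,\pi]$, and I write $e^{-t(1-\gamma(\lambda))}=e^{-(1-\gamma(0))t}\,e^{-s}e^{s\cos\theta}$. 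Each resulting term is of the form $\frac{1}{2\pi}\int_{-\pi}^{\pi}e^{s\cos\theta}e^{im\theta}\,d\theta=I_{|m|}(s)$, that is $e^{s}h^{\mathbb{Z}}_s(m)$. The factor $(1-q^{2i\lambda})$ produces exactly the differences $h^{\mathbb{Z}}_s(n+2k)-h^{\mathbb{Z}}_s(n+2k+2)$. It remains to check that the prefactors collapse to $q^{-n/2}$:
\[
\frac{q}{4\pi(q+1)}\cdot 2\cdot(q^{1/2}+q^{-1/2})\,q^{-1/2}\cdot 2\pi=1 .
\]
I expect this bookkeeping step to be the main obstacle, together with two further checks: that the points $\lambda\in\frac{\tau}{2}\mathbb{Z}$, where the formula for $\varphi_\lambda$ changes, form a null set; and that the sign conventions in $\mathbf{c}$ give differences with the correct orientation.

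For (ii), I use the Bessel recurrence $I_{m-1}(s)-I_{m+1}(s)=\frac{2m}{s}I_m(s)$. In terms of the heat kernel on $\mathbb{Z}$ it reads
\[
h^{\mathbb{Z}}_s(m-1)-h^{\mathbb{Z}}_s(m+1)=\frac{2m}{s}\,h^{\mathbb{Z}}_s(m).
\]
Applying it with $m=n+2k+1$ to each bracket in (i), and using $s=t\gamma(0)$, yields (ii) at once.

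For (iii), the lower bound comes from keeping only the $k=0$ term in (ii), since all terms are nonnegative. For the upper bound, I use that $m\mapsto I_m(s)$ is decreasing for $m\ge0$, which is clear from the integral representation or the series expansion. This gives $h^{\mathbb{Z}}_s(n+2k+1)\le h^{\mathbb{Z}}_s(n+1)$ for every $k\ge0$. Hence the sum in (ii) is at most
\[
h^{\mathbb{Z}}_s(n+1)\sum_{k\ge0}q^{-k}(n+2k+1)\le C_q\,(n+1)\,h^{\mathbb{Z}}_s(n+1),
\]
because $\sum_{k\ge0}q^{-k}(2k+1)$ is finite. The resulting constants depend only on $q$, and this gives (iii).
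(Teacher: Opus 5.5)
Your proposal is correct, and the three checks you flagged all go through. The prefactor is $\frac{q\log q}{4\pi(q+1)}\cdot 2\cdot\frac{q+1}{q}\cdot\frac{2\pi}{\log q}=1$. The exceptional set in $[-\tau/2,\tau/2]$ is just $\{0,\pm\tau/2\}$, while the right-hand side of your identity for $\varphi_\lambda(n)|\mathbf{c}(\lambda)|^{-2}$ is bounded, because $|q^{1/2-i\lambda}-q^{-1/2+i\lambda}|\ge q^{1/2}-q^{-1/2}$. The factor $1-q^{2i\lambda}$ produces $h^{\mathbb{Z}}_s(n+2k)-h^{\mathbb{Z}}_s(n+2k+2)$ in the stated order. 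You also rightly integrate over $[-\pi,\pi]$; the interval $[-\pi/2,\pi/2]$ in the paper's display for $h^{\mathbb{Z}}_t$ is a misprint, as the case $t=0$ shows.

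The paper gives no proof of this proposition and simply imports it from \cite{CMS}. Your computation is the natural self-contained derivation. In substance it is the inverse Abel transform on the tree, $f(n)=q^{-n/2}\sum_{k\ge0}q^{-k}\bigl[\mathcal{A}f(n+2k)-\mathcal{A}f(n+2k+2)\bigr]$, applied to $\mathcal{A}h_t=e^{-(1-\gamma(0))t}h^{\mathbb{Z}}_{t\gamma(0)}$. As a consistency check, your constants in (iii) are $\frac{2}{\gamma(0)}$ and $\frac{2}{\gamma(0)}\sum_{k\ge0}(2k+1)q^{-k}=\frac{2}{\gamma(0)}\frac{q(q+1)}{(q-1)^2}$. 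After comparing $(1+|x|)q^{-|x|/2}$ with $\varphi_0(x)$, they reproduce exactly the constants in \eqref{eq:constants}.

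One justification in (iii) should be tightened. Full monotonicity of $m\mapsto I_m(s)$ is true, but it is not evident from the series: the termwise ratio $\frac{s/2}{j+m+1}$ exceeds $1$ for large $s$. Nor is it evident from the integral representation, whose integrand for $I_m-I_{m+1}$ changes sign. You do not need it, though. The bound only requires decrease in steps of two, $h^{\mathbb{Z}}_s(n+2k+1)\le h^{\mathbb{Z}}_s(n+1)$. This follows at once from the recurrence you already used for (ii): $h^{\mathbb{Z}}_s(m-1)-h^{\mathbb{Z}}_s(m+1)=\frac{2m}{s}h^{\mathbb{Z}}_s(m)>0$ for $m\ge1$, with positivity of $I_m(s)$ read off from its power series.
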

    The $\ell^p$ norms of the heat kernel can be derived either by the pointwise estimates of the heat kernel, \cite[p.1740]{MedSe}, or by using harmonic analysis, \cite[Lemma 2.1]{CMS}:
    \begin{equation}\label{eq:heatLpnorms}
        \|h_t\|_{\ell^p}\asymp \begin{cases}
            \min\{1, t^{-\frac{1}{2p'}}\}\,e^{-b_p t}, \quad &\text{if} \quad 1\leq p <2;\\
            \min\{1, t^{-\frac{3}{4}}\}\,e^{-b_2 t}, \quad &\text{if} \quad p=2;\\
            \min\{1, t^{-\frac{3}{2}}\}\,e^{-b_2 t}, \quad &\text{if} \quad 2\leq p \leq \infty.
        \end{cases}
    \end{equation}
	Here, for all $1\leq p\leq 2$, we set
    \[
    b_p=1-\gamma\left( i \left(\frac{1}{p}-\frac{1}{2}\right)\right),
    \]
    which actually coincides with the infimum of the real part of the bottom of the $\ell^p$ spectrum of $\mathcal{L}$, \cite[p.4275]{CMS}
    
	We now introduce a notion that is essential for the $\ell^p$ concentration of heat on homogeneous trees. Let $1\leq p < \infty$. We say that $\textbf{B}_p(t)\subseteq \mathcal{T}$ is an $\ell^p$ \emph{critical region} for the heat kernel on $\mathcal{T}$, if
	\[
	\frac{1}{\|h_t\|_{\ell^p}}\left(\sum_{x\in\textbf{B}_p(t)}h_t(x)^p\right)^{1/p}\rightarrow 1, \quad \text{as} \quad  t\rightarrow \infty, 
	\]
	or equivalently, if 
	\[
	\frac{1}{\|h_t\|_{\ell^p}}\left(\sum_{x\in\mathcal{T}\setminus \textbf{B}_p(t)}h_t(x)^p\right)^{1/p}\rightarrow 0, \quad \text{as} \quad  t\rightarrow \infty.
	\]
    
	Using the above-mentioned heat kernel estimates, Medolla and Setti determined in \cite{MedSe} the $\ell^1$ critical region of $h_t$:
	\begin{theorem}\label{thm:criticalregion} (\cite[p.1740]{MedSe}) Assume that $q\geq 2$. Let $R_1=(q-1)/(q+1)$ and $r(t)$ be a positive function such that 
		$$\frac{r(t)}{\sqrt{t}}\rightarrow \infty, \quad \frac{r(t)}{t}\rightarrow 0,\quad \text{ as }  t\rightarrow \infty.$$
		Then, for all $y\in \mathcal{T}$, 
		$$\sum_{x: \; R_1 t-r(t)\leq d(x,y)\leq R_1 t+r(t)} h_t(x,y)\rightarrow 1, \quad \text{ as } t\rightarrow \infty.$$
	\end{theorem}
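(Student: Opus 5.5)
The plan is to reduce the statement to the case $y=o$ and then do a Laplace-type analysis of the radial profile of $h_t$. Since $h_t(x,y)$ depends only on $d(x,y)$ and the tree is vertex-transitive, it suffices to take $y=o$. The heat kernel is a probability measure, so it is enough to show that the mass outside the annulus tends to zero:
\[
\sum_{n:\,|n-R_1t|>r(t)} |S(o,n)|\,h_t(n)\to 0.
\]
Here $|S(o,n)|\asymp q^{n}$. By Proposition \ref{heatkernel}(iii), the $n$-th shell mass is
\[
a_t(n):=|S(o,n)|\,h_t(n)\asymp \frac{e^{-(1-\gamma(0))t}}{t}\,(1+n)\,q^{n/2}\,h^{\mathbb{Z}}_{t\gamma(0)}(n+1).
\]
Theorem \ref{thm:heat Z asym} then gives the profile in $n$ explicitly. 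With $s=\gamma(0)t$, $j=n+1$ and $z=s/j$, we have
\[
\log a_t(n)=-(1-\gamma(0))t+\tfrac{n}{2}\log q+j\,\xi(s/j)-s+O(\log t).
\]

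Next I would study the exponent $\Phi_t(n)=\frac n2\log q+j\xi(s/j)-s$ as a function of the continuous variable $n$. Write $j\xi(s/j)=\sqrt{j^2+s^2}+j\log\frac{s}{j+\sqrt{j^2+s^2}}$. Its $j$-derivative is $\log\frac{s}{j+\sqrt{j^2+s^2}}=-\operatorname{arcsinh}(j/s)$, and the second derivative is $-1/\sqrt{j^2+s^2}$. Thus $\Phi_t$ is concave, with maximum where $\operatorname{arcsinh}(j/s)=\frac12\log q$, that is $j/s=\sinh(\tfrac12\log q)=\frac{q-1}{2\sqrt q}$. Since $s=\frac{2\sqrt q}{q+1}t$, this is $j=\frac{q-1}{q+1}t=R_1t$, which is the centre of the annulus.

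For the maximum value, at $j/s=\sinh\theta$ with $\theta=\frac12\log q$ one gets $\sqrt{j^2+s^2}-s=s(\cosh\theta-1)$. Here $s\cosh\theta=\gamma(0)t\cdot\frac{q+1}{2\sqrt q}=t$, so the exponent combines with $-(1-\gamma(0))t$ to give a total of $O(\log t)$. This is consistent with conservation of mass.

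The concavity, with curvature $-1/\sqrt{j^2+s^2}\asymp -1/t$ uniformly for $j\lesssim t$, then gives a Gaussian-type bound
\[
a_t(n)\lesssim t^{C}\exp\bigl(-c\,(n-R_1t)^2/t\bigr).
\]
For $n\gg t$ the curvature degenerates, but there $h^{\mathbb{Z}}$ decays superexponentially, like $e^{-j\log(j/s)}$, which beats $q^{n/2}$, so those tails are trivially summable.

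The main obstacle is the polynomial prefactor. A crude $t^C$ in front of a Gaussian only kills the region $|n-R_1t|\gg\sqrt{t\log t}$, whereas $r(t)$ may be just slightly larger than $\sqrt t$. So I would keep the prefactors exactly. Using the asymptotic $\sim$ in Theorem \ref{thm:heat Z asym}, together with the exact formula Proposition \ref{heatkernel}(ii), or at least (iii) with $\asymp$, near the peak gives
\[
a_t(n)\asymp t^{-1}\cdot t\cdot t^{-1/2}\exp\bigl(\Phi_t(n)-\Phi_t(R_1t)\bigr)=t^{-1/2}e^{-\kappa (n-R_1t)^2/t\,(1+o(1))},
\]
with $\kappa>0$, once one checks by the computation above that the exponential factors cancel exactly at the peak. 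Summing over $|n-R_1t|>r(t)$ is then comparable to a Gaussian tail integral $\int_{|u|>r(t)/\sqrt t}e^{-\kappa u^2}du\to0$. The condition $r(t)/t\to0$ keeps the quadratic Taylor expansion valid, and outside $|n-R_1t|\le\varepsilon t$ concavity gives linear-in-$|n-R_1t|$ exponential decay of order $e^{-c\varepsilon|n-R_1t|}$, which is summable with a vanishing total. The constants implicit in $\asymp$ are harmless, since we only need the tail sum to tend to zero and not its exact value.
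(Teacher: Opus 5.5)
Your argument is correct and is essentially the route the paper has in mind. The paper does not reprove this statement: it cites Medolla--Setti, who derive it from the two-sided estimate of Proposition~\ref{heatkernel}(iii) combined with Theorem~\ref{thm:heat Z asym}, i.e.\ from a Laplace-type analysis of the shell masses $|S(o,n)|\,h_t(n)$, whose exponent is concave in $n$, peaks at $n\approx R_1t$ and has curvature of order $1/t$. As you observe, because $\sum_x h_t(x,y)=1$ only the upper halves of these $\asymp$ estimates are needed, provided you keep the exact prefactor $t^{-1/2}$ near the peak; the hypothesis $r(t)/t\to0$ plays no role in the tail bound.
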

    Woess gave an interesting probabilistic interpretation -and a strengthening- of this phenomenon in \cite{Woess}, in terms of a central limit theorem: for the heat diffusion process $(X_t)_{t> 0}$, $d(X_t, X_0)$ is asymptotically normal with mean $R_1t$ and variance $t$ (see also \cite{Bab} for a probabilistic interpretation for a similar phenomenon on $L^1$ on symmetric spaces).
	Analogous statements can be made for any $1\leq p\leq \infty$; however, since the probabilistic interpretation is not clear for $p>1$, we stick to the analytic statement. More precisely, the following $\ell^p$ concentration results are proven in \cite{MedSe}:
	\begin{proposition}(\cite[p.1470]{MedSe}) \label{prop:MedSe_ellp} The following space-time regions describe where the heat kernel on $\mathcal{T}$ concentrates in the $\ell^p$ sense:		\begin{itemize}
			\item[(i)] Let $1 \leq p<2$. Set $R_p = (q^{1/p} - q^{1/p'})/(q+1)$. If $r(t)$ is any function such that $r(t) / t^{1 / 2} \rightarrow+\infty$, $r(t)/t\rightarrow 0$ as $t \rightarrow+\infty$, then
			$$
			\left\|h_t\right\|_p^{-p} \sum_{R_p t-r(t) \leq|x| \leq R_p t+r(t)} h_t(x)^p \rightarrow 1 \quad \text { as } t \rightarrow+\infty .
			$$
			\item[(ii)] Let $p=2$. If $r_1(t) / t^{1 / 2} \rightarrow 0$ and $r_2(t) / t^{1 / 2} \rightarrow+\infty$ as $t \rightarrow+\infty$, then
			$$
			\left\|h_t\right\|_2^{-2} \sum_{r_1(t) \leq|x| \leq r_2(t)} h_t(x)^2 \rightarrow 1 \quad \text { as } t \rightarrow+\infty .
			$$
			\item[(iii)] Let $2<p<+\infty$. If $r_3(t) \rightarrow+\infty$ as $t \rightarrow+\infty$, then
			$$
			\left\|h_t\right\|_p^{-p} \sum_{|x| \leq r_3(t)} h_t(x)^p \rightarrow 1 \quad \text { as } t \rightarrow+\infty;
			$$
			if $p=+\infty$, then the maximum of $h_t(x)$ is attained at $o$ and
			$$
			\left\|h_t\right\|_{\infty}^{-1} \sup _{|x| \geq r_3(t)} h_t(x) \rightarrow 0 \quad \text { as } t \rightarrow+\infty.
			$$
		\end{itemize}
	\end{proposition}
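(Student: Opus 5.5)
We will prove all three items from the two-sided bound in Proposition \ref{heatkernel}(iii) combined with Theorem \ref{thm:heat Z asym}. Together they give, uniformly in $n=|x|$ and $t$,
\[
h_t(n)\asymp t^{-1}e^{-(1-\gamma(0))t}(1+n)\,q^{-n/2}\,F(n+1,\gamma(0)t).
\]
The $\ell^p$ sum then reduces to a one-dimensional sum over radii, with the sphere volume $|S(o,n)|\asymp q^n$ as weight:
\[
\sum_{x}h_t(x)^p\asymp t^{-p}e^{-p(1-\gamma(0))t}\sum_{n\ge0}(1+n)^p\,q^{n(1-p/2)}\,F(n+1,\gamma(0)t)^p .
\]
Write $s=\gamma(0)t$ and $n=\alpha s$. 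By Theorem \ref{thm:heat Z asym},
\[
F(n,s)\approx (2\pi)^{-1/2}(n^2+s^2)^{-1/4}\exp\{-s\,\Phi(\alpha)\},\qquad \Phi(\alpha)=1-\alpha\,\xi(1/\alpha).
\]
The summand therefore has the form $\exp\{-s\,\Psi_p(\alpha)\}$ times polynomial factors, where
\[
\Psi_p(\alpha)=p\,\Phi(\alpha)-\alpha\bigl(1-\tfrac p2\bigr)\log q .
\]
The plan is a Laplace-method analysis of this sum, using the asymptotic $h^{\mathbb Z}\sim F$ (not only $\asymp$) wherever ratios must tend to $1$.

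\textbf{Case $1\le p<2$.} First we compute $\Phi'(\alpha)=\operatorname{arcsinh}\alpha$, which follows from the explicit form of $\xi$ (this is the standard Bessel saddle point). The minimizer of $\Psi_p$ then solves $p\,\operatorname{arcsinh}\alpha=(1-p/2)\log q$, giving
\[
\alpha_p=\sinh\Bigl(\bigl(\tfrac1p-\tfrac12\bigr)\log q\Bigr)=\frac{q^{1/p-1/2}-q^{1/2-1/p}}{2}.
\]
Multiplying by $\gamma(0)$ yields $n=R_p t$ with $R_p=(q^{1/p}-q^{1/p'})/(q+1)$, which matches the claim. Since $\Phi''(\alpha)=(1+\alpha^2)^{-1/2}>0$, $\Psi_p$ is strictly convex. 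Expanding to second order, the summand is comparable to a Gaussian in $n-R_pt$ of variance $\asymp t$, multiplied by factors that vary slowly on the $\sqrt t$ scale. Hence the proportion of the sum coming from $|n-R_pt|>r(t)$ is bounded by a Gaussian tail $\lesssim e^{-c\,r(t)^2/t}\to0$. To control the region $|n-R_pt|\gtrsim t$, where the quadratic expansion fails, we use convexity: $\Psi_p(\alpha)-\Psi_p(\alpha_p)\ge c\min\{(\alpha-\alpha_p)^2,|\alpha-\alpha_p|\}$. This gives exponential smallness there, and the tail $n\to\infty$ is summable because $\Phi(\alpha)\sim\alpha\log\alpha$ grows superlinearly.

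\textbf{Case $p\ge2$.} For $p>2$ the weight $q^{n(1-p/2)}$ decays exponentially. Moreover $\Phi(\alpha)=\alpha^2/2+O(\alpha^4)$ near $0$, so for $n\le\sqrt t$ we have $F(n+1,s)\approx F(1,s)$. The summand is therefore $\asymp(1+n)^p q^{n(1-p/2)}$ times a common factor, and summing $n>r_3(t)$ of a convergent series gives $o(1)$ relative to the total. For $p=\infty$, the fact that the maximum is attained at $o$ follows since $h_t$ is decreasing in $|x|$, which is visible from Proposition \ref{heatkernel}(i) as a sum of positive terms. The sup bound follows from $(1+n)q^{-n/2}\to0$, with the Bessel factor bounded by its value at $j=1$ up to constants.

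\textbf{Case $p=2$.} The exponential weight $q^{n(1-p/2)}$ disappears, and the summand becomes $(1+n)^2 F(n+1,s)^2\approx n^2 s^{-1}e^{-n^2/s}$, a $\chi$-type profile concentrated at scale $n\asymp\sqrt t$. The mass from $n\le r_1(t)$ is then $\lesssim(r_1/\sqrt t)^3\to0$, and the mass from $n\ge r_2(t)$ is a Gaussian tail.

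\textbf{Main obstacle.} The hardest step is making the Laplace method rigorous with only two-sided bounds in hand. Constants from $\asymp$ are harmless, because we only need the ratio of the excluded part to the whole to tend to $0$, and the lower bound on the whole comes from a window of width $\sqrt t$ around the maximizer. What requires care is the uniformity of the expansion of $n\,\xi(s/n)$ across regimes, from $n\ll\sqrt s$ through $n\asymp s$ to $n\gg s$. We would handle this by splitting the range at $n=\epsilon t$ and $n=Kt$, using the exact convexity identity for $\Phi$ in the middle range and crude monotonicity bounds outside it.
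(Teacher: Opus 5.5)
Your main argument is sound, and it follows the route the paper attributes to Medolla--Setti; the paper itself only cites the result. The two-sided bound of Proposition \ref{heatkernel}(iii), combined with $h^{\mathbb{Z}}\asymp F$, turns $\|h_t\|_{\ell^p}^p$ into a radial sum with exponent $-s\Psi_p(n/s)$. Your computations check out: $\Phi'(\alpha)=\operatorname{arcsinh}\alpha$, $\gamma(0)\sinh(\delta_p\log q)=R_p$, and strict convexity of $\Psi_p$ gives a Gaussian window of width $\asymp\sqrt t$ with exponential smallness elsewhere. Because $F$ is an explicit formula, and you only need the excluded mass to be small relative to the total, neither the $\asymp$ constants nor the ``uniformity across regimes'' are real obstacles. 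One imprecision: for $p>2$ the summand is $\asymp(1+n)^pq^{n(1-p/2)}$ times a common factor only for $n\lesssim\sqrt t$. For the tail beyond that you need the upper bound $F(n+1,s)\le F(1,s)$. This holds because $\frac{d}{dn}\bigl[n\,\xi(s/n)\bigr]=\log\frac{z}{1+\sqrt{1+z^2}}<0$ with $z=s/n$, but you only invoke it in the $p=\infty$ paragraph.

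The step that fails as written is the $p=\infty$ claim that the maximum is at $o$ because $h_t$ is decreasing in $|x|$, ``visible from Proposition \ref{heatkernel}(i) as a sum of positive terms''. Positivity of the brackets gives $h_t>0$, not monotonicity. The inequality $h_t(n+1)\le h_t(n)$ amounts to $G(n+1)\le q^{1/2}G(n)$ for the series $G$ in (i), and this is not even true termwise for $q=2,3$. Indeed, by $I_m-I_{m+2}=\frac{2(m+1)}{s}I_{m+1}$, the leading terms at $n=0$ compare $2h^{\mathbb Z}_s(2)$ with $\sqrt q\,h^{\mathbb Z}_s(1)$, while $h^{\mathbb Z}_s(2)/h^{\mathbb Z}_s(1)\to1$. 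What you actually need follows at once from the semigroup property, Cauchy--Schwarz and $h_t(x,x)=h_t(o)$:
\[
h_t(x)=\sum_{z\in\mathcal{T}}h_{t/2}(x,z)\,h_{t/2}(z,o)\le h_t(x,x)^{1/2}\,h_t(o,o)^{1/2}=h_t(o).
\]
Full monotonicity in $|x|$ is also true but not needed. One way to see it: $D_n(t)=h_t(n)-h_t(n+1)$ solves a linear system with nonnegative off-diagonal coefficients and initial data $(1,0,0,\dots)$, so $D_n(t)\ge0$. Your proof that $\|h_t\|_\infty^{-1}\sup_{|x|\ge r_3(t)}h_t(x)\to0$ only uses $\|h_t\|_\infty\ge h_t(o)$ and is fine.
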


	\section{Heat kernel asymptotics}\label{sec:3}
	The purpose of this section is to give results concerning large-time heat kernel asymptotics, that is, to prove Theorem \ref{thm:thmA}. So far, to the best of our knowledge, only (sharp) bounds were available in the literature \cite{CMS}, so these results are new; in addition, to study the large-time asymptotic behavior of solutions to the heat equation, see Theorem \ref{thm:thmB}, bounds would not be sufficient. On hyperbolic space (and in fact on all rank one non-compact symmetric spaces), complete asymptotics for the heat kernel were obtained in \cite[p.1080]{AJ99}, as time and space go to infinity; moreover, these asymptotics were dictated by the ratio of space to time. The main results of this section on homogeneous trees are analogous.
	
	Let us start by defining here for future reference
	\begin{equation}\label{eq:zeta}
		\zeta(z):=\frac{\xi(z)}{z}, \quad z>0,
	\end{equation}
	where $\xi$ is the function given in Theorem \ref{thm:heat Z asym}. Then, 
	\begin{equation}\label{eq:zetader}
		\zeta'(z)=-\frac{1}{z^2}\log\left(\frac{z}{1+\sqrt{1+z^2}}\right),
	\end{equation}
and 
\begin{align}\label{eq:zeta''}
\zeta''(z)
= \frac{1}{z^3}\left(
\sqrt{1+z^2}
- 2\,\operatorname{arcsinh}\!\left(\frac{1}{z}\right) 
\right),
\end{align}
which we notice that is bounded away from zero.

We start with an auxiliary lemma about heat kernels on $\mathbb{Z}$, which is essential to determining asymptotics for the heat kernel on $\mathcal{T}$. As already highlighted by Cowling, Meda and Setti in \cite[p.4280]{CMS}, the connection between these two heat kernels is reminiscent of the relationship between the heat kernel of a symmetric space and that of a maximal flat submanifold. 

\begin{lemma}\label{Lemma:heatZasymp}
Let $h_t^{\mathbb{Z}}$ denote the continuous time heat kernel on $\mathbb{Z}$. Then, we have
\begin{equation}\label{eq:limitC}
\sum_{k=1}^{\infty}q^{-k}\frac{h^{\mathbb{Z}}_{t\gamma(0)}(|x|+2k+1)}{h^{\mathbb{Z}}_{t\gamma(0)}(|x|+1)}\to {C}, \quad \text{as } |x|\to \infty, \, t\to \infty.
\end{equation}
Here, the quantity $C$ depends on the limit of $|x|/t$. More precisely, 
\[
 C=\begin{cases}
   \sum_{k=1}^{\infty}q^{-k}\left(\frac{s_0}{1+\sqrt{1+s_0^2}}\right)^{2k}, \quad &\text{if}\quad  |x|/t\to C_0>0, \,\text{where } s_0:=\frac{\gamma(0)}{C_0};\\
   \frac{1}{q-1}, \quad &\text{if}\quad |x|/t \to 0,\\
   0, \quad &\text{if}\quad  |x|/t\to \infty.
\end{cases}
\]
\end{lemma}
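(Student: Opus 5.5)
We will prove the lemma by dominated convergence in $k$: first identify the limit of each ratio $h^{\mathbb{Z}}_{T}(n+2k)/h^{\mathbb{Z}}_{T}(n)$, then produce a summable majorant. Throughout set $n=|x|+1$ and $T=t\gamma(0)$, so both $n,T\to\infty$.

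\emph{Step 1: the termwise limit.} By Theorem \ref{thm:heat Z asym}, $h^{\mathbb{Z}}_T(j)\sim F(j,T)$ as $j^2+T^2\to\infty$. Hence, for each fixed $k$,
\[
\frac{h^{\mathbb{Z}}_T(n+2k)}{h^{\mathbb{Z}}_T(n)}\sim \left(\frac{1+n^2+T^2}{1+(n+2k)^2+T^2}\right)^{1/4}\exp\{(n+2k)\,\xi(T/(n+2k))-n\,\xi(T/n)\}.
\]
For fixed $k$ the prefactor tends to $1$. Write $g(j)=j\,\xi(T/j)$. Then
\[
\frac{d}{dj}\, j\xi(T/j)=\xi(s)-s\xi'(s), \qquad s=T/j.
\]
From the formula for $\xi$ one has $\xi'(s)=\sqrt{1+s^2}/s$, so the derivative equals $\log\big(s/(1+\sqrt{1+s^2})\big)=-\operatorname{arcsinh}(1/s)$. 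Moreover $\frac{d^2}{dj^2}g=O(1/j)$, which follows from $\zeta''$ in \eqref{eq:zeta''} or can be checked directly. A Taylor expansion over the range $[n,n+2k]$ therefore gives
\[
g(n+2k)-g(n)=2k\log\Big(\frac{s_n}{1+\sqrt{1+s_n^2}}\Big)+O(k^2/n),\qquad s_n=T/n .
\]
This yields the three cases:
\begin{itemize}
\item if $|x|/t\to C_0\in(0,\infty)$, then $s_n\to s_0=\gamma(0)/C_0$ and the ratio tends to $\big(s_0/(1+\sqrt{1+s_0^2})\big)^{2k}$;
\item if $|x|/t\to0$, then $s_n\to\infty$, the log tends to $0$, and the ratio tends to $1$;
\item if $|x|/t\to\infty$, then $s_n\to0$, the log tends to $-\infty$, and the ratio tends to $0$.
\end{itemize}
Summing against $q^{-k}$ gives the stated values of $C$, using $\sum_{k\ge1} q^{-k}=1/(q-1)$ in the middle case.

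\emph{Step 2: the majorant.} We must bound the ratio uniformly in $k$ and in large $n,T$, for which we use monotonicity. Since $h^{\mathbb{Z}}_T(j)=e^{-T}I_{|j|}(T)$ and $I_\nu(T)$ is decreasing in $\nu\ge0$ for fixed $T>0$, every ratio satisfies $h^{\mathbb{Z}}_T(n+2k)/h^{\mathbb{Z}}_T(n)\le 1$. The majorant $q^{-k}$ is summable because $q\ge2$. Monotonicity of $I_\nu$ in the order follows, for instance, from the integral representation $I_\nu(T)=\frac1\pi\int_0^\pi e^{T\cos s}\cos(\nu s)\,ds$ via the standard Turán-type argument. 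Alternatively, one can avoid it: the two-sided estimate $h^{\mathbb{Z}}\asymp F$ together with the concavity computation above (the derivative $-\operatorname{arcsinh}(1/s)$ is $\le0$) shows the ratio is $O(1)$ uniformly, which also suffices. Dominated convergence then gives \eqref{eq:limitC}.

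\emph{Expected obstacle.} The main delicacy is making the Taylor error $O(k^2/n)$ harmless. It does not vanish uniformly in $k$, which is precisely why Step 2 uses a uniform bound rather than the expansion. One also has to handle joint limits in which $|x|/t$ converges but the convergence is not uniform. This is settled by noting that the limit $\big(s/(1+\sqrt{1+s^2})\big)^{2k}$ is continuous in $s\in[0,\infty]$ once the endpoints are interpreted as the limiting values $0$ and $1$.
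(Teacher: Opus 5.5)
Your proposal is correct and follows essentially the same route as the paper: dominated convergence in $k$ with the majorant $q^{-k}$ coming from $h^{\mathbb{Z}}_{T}(n+2k)\le h^{\mathbb{Z}}_T(n)$, and termwise limits obtained from Theorem~\ref{thm:heat Z asym} via a first-order expansion of the exponent $j\,\xi(T/j)=T\,\zeta(T/j)$, which is exactly the quantity the paper handles with the mean value theorem for $\zeta$. Two small points. The monotonicity is more cleanly justified by the recurrence $I_{j-1}(T)-I_{j+1}(T)=\frac{2j}{T}I_j(T)>0$ than by a Tur\'an-type argument, although your alternative majorant via $h^{\mathbb{Z}}\asymp F$ and $\frac{d}{dj}\,j\xi(T/j)=-\operatorname{arcsinh}(j/T)\le 0$ already suffices. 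Also, $g''(j)=-(j^2+T^2)^{-1/2}$ is best checked directly rather than read off from the displayed formula for $\zeta''$.
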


\begin{proof}
First, by \cite[p.4282]{CMS}, we know that $h^{\mathbb{Z}}_{t\gamma(0)}(|x|+2k+1)\leq h^{\mathbb{Z}}_{t\gamma(0)}(|x|+1)$ for all $t, |x|, k$. Hence, owing to $\sum_{k=1}^{\infty}q^{-k}<\infty$, we may apply dominated convergence to the sum in \eqref{eq:limitC} under consideration.

   According to Theorem \ref{thm:heat Z asym}, the quotient of heat kernels on $\mathbb{Z}$ is asymptotically equal to 
		\begin{align*}
		\frac{h_{t\gamma(0)}^{\mathbb{Z}}(|x|+2k+1)}{h_{t\gamma(0)}^{\mathbb{Z}}(|x|+1)}&\sim \left( \frac{1+\gamma(0)^2t^2+(|x|+1)^2}{1+\gamma(0)^2t^2+(|x|+2k+1)^2}\right)^{1/4} \times \\ &\exp\left\{t\gamma(0) \left[ \zeta\left(\frac{t\gamma(0)}{|x|+2k+1}\right)- \zeta\left(\frac{t\gamma(0)}{|x|+1}\right) \right]\right\},
		\end{align*} 
		where $\zeta$ is defined in \eqref{eq:zeta}. 

        First, we have 
    \begin{align*}
 \frac{1+\gamma(0)^2t^2+(|x|+1)^2}{1+\gamma(0)^2t^2+(|x|+2k+1)^2}=&1-\frac{4k(|x|+1+k)}{1+\gamma(0)^2t^2+(|x|+2k+1)^2}.
    \end{align*}
    The right-hand side is uniformly bounded for all $|x|, t\geq 0$ and $k\in \mathbb{N}$, so using that for $(1-u)^{1/4}=1+O(u)$, $u\in [0,1]$, we obtain that  
    \begin{align*}
    \left(\frac{1+\gamma(0)^2t^2+(|x|+1)^2}{1+\gamma(0)^2t^2+(|x|+2k+1)^2}\right)^{1/4}&=1+k \, O\left( \frac{1}{1+\gamma(0)t+(|x|+1+2k)} \right)\\
    &=1+k\, O(t^{-1}).
\end{align*}
Clearly, as $kq^{-k}$ is summable, the error term will not affect the computation of $C$ as $t\rightarrow \infty$; on the other hand, as the main term above is $1$, multiplication with it will not affect the value of $C$ either. We can therefore ignore the contribution of this quantity in \eqref{eq:limitC} and only focus on the exponential term. 

By the mean value theorem, we have for the exponent
		\[
		t\gamma(0) \left[ \zeta\left(\frac{t\gamma(0)}{|x|+2k+1}\right)-\zeta\left(\frac{t\gamma(0)}{|x|+1}\right) \right] =-2k\,\zeta'(w)\, \frac{t\gamma(0)}{(|x|+1)}\, \frac{t\gamma(0)}{(|x|+2k+1)},
		\]
		for some $\frac{t\gamma(0)}{|x|+2k+1}<w<\frac{t\gamma(0)}{|x|+1}$: we write
        \begin{equation}\label{eq:w def}
        w=\frac{t\gamma(0)}{|x|+\xi \cdot 2k+1} \quad \text{for some } \xi\in (0,1).
        \end{equation}
        Notice that
        \begin{align}
            \frac{1}{w^2}\, \frac{t\gamma(0)}{|x|+1} \, \frac{t\gamma(0)}{|x|+2k+1} &= \frac{|x|+\xi \cdot 2k+1}{|x|+1+2k}\, \frac{|x|+\xi \cdot 2k+1}{|x|+1} \notag\\
            &=\left(1+2k\,\frac{\xi-1}{|x|+1+2k} \right)\left(1+2k\,\frac{\xi}{|x|+1} \right) \notag\\
            &=1+k\, O(|x|^{-1}). \label{eq:w^2}
        \end{align}
    Since by \eqref{eq:zetader}, $\zeta'(w)=-\frac{1}{w^2}\log\left(\frac{z}{1+\sqrt{1+w^2}}\right)$, we get
        \begin{align}
            t\gamma(0) \left[ \zeta\left(\frac{t\gamma(0)}{|x|+2k+1}\right)-\zeta\left(\frac{t\gamma(0)}{|x|+1}\right) \right] &=2k\left( 1+k\,O(|x|^{-1})\right) \psi(w), \label{eq:heatZw}
        \end{align}
		where 
        \begin{equation}\label{eq:psi}
            \psi(z):=\log\left(\frac{z}{1+\sqrt{1+z^2}}\right), \, z>0, \quad \psi'(z)=\frac{1}{z}-\frac{1}{\sqrt{1+z^2}}.
        \end{equation}
        We next distinguish cases for $|x|, t\to \infty$.

        \emph{Case I:} Assume that $\frac{t}{|x|}\to C_0>0$. Then $$w\to s_0:=\frac{\gamma(0)}{C_0},$$ and
        \[
        \psi(w)\to \psi(s_0)=\log\left( \frac{s_0}{1+\sqrt{1+s_0^2}}\right).
        \]
		Hence, using \eqref{eq:heatZw}, we get, as $t, |x|\to \infty$,
		\begin{align*}\exp\left\{t\gamma(0) \left[ \zeta\left(\frac{t\gamma(0)}{|x|+2k+1}\right)- \zeta\left(\frac{t\gamma(0)}{|x|+1}\right) \right]\right\}\to \left( \frac{s_0}{1+\sqrt{1+s_0^2}}\right)^{2k}. 
		\end{align*}
		It follows that as $t, |x|\to \infty$, the sum in \eqref{eq:limitC} converges to
		\[
		\sum_{k=1}^{\infty} \left(\sqrt{q}\,\frac{1+\sqrt{1+s_0^2}}{s_0}\right)^{-2k}.
		\]
It is straightforward to verify that for any $C_0>0$, the quantity $\sqrt{q}\,\frac{1+\sqrt{1+s_0^2}}{s_0}$ is greater than $1$. Hence that the geometric series above is summable, and this gives the value of $C$ in \eqref{eq:limitC} in the first case.

	 \emph{Case II:} Assume that $\frac{t}{|x|}\to \infty$. Then by \eqref{eq:w def}, we have $w\to \infty$. It follows that
        \[
       \lim_{w\to \infty} \psi(w)=0,
        \]	
	so, by \eqref{eq:heatZw},
    \begin{align*}
    \exp\left\{t\gamma(0) \left[ \zeta\left(\frac{t\gamma(0)}{|x|+2k+1}\right)- \zeta\left(\frac{t\gamma(0)}{|x|+1}\right) \right]\right\}\to 1.
		\end{align*}
        Thus, the quotient $h_{t\gamma(0)}(|x|+2k+1)/h_{t\gamma(0)}(|x|+1)$ converges to $1$, in turn 
		proving the claim for $C=\sum_{k=1}^{\infty}q^{-k}$ in \eqref{eq:limitC} in the second case.

\emph{Case III:} Assume that $\frac{t}{|x|}\to 0$. Then by \eqref{eq:w def}, we have $w\to 0$. It follows that
        \[
       \lim_{w\to \infty} \psi(w)=-\infty,
        \]	
	so by \eqref{eq:heatZw},
    \begin{align*}
    \exp\left\{t\gamma(0) \left[ \zeta\left(\frac{t\gamma(0)}{|x|+2k+1}\right)- \zeta\left(\frac{t\gamma(0)}{|x|+1}\right) \right]\right\}\to 0, 
		\end{align*}
		which proves the claim $C=0$ in the final case.
\end{proof}

The following result is one of the main ones in the present work.

\begin{theorem}\label{prop:heat_asymp_ell1}
		Let $h_t$ denote the heat kernel on a homogeneous tree. Then, for $x\in \mathcal{T}$ such that $|x|\to \infty$ and $t\to \infty$, we have the asymptotics
		\[
		h_t(x)\sim\frac{2}{\gamma(0)}\, (C+1)\, \frac{e^{-(1-\gamma(0))t}}{t}\,(1+|x|)\, q^{-|x|/2}\, h_{t\gamma(0)}^{\mathbb{Z}}(|x|+1), \]
where $C$ is the constant of Lemma \ref{Lemma:heatZasymp}.
	\end{theorem}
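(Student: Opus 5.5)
Starting from Proposition \ref{heatkernel}(ii), I would write the exact identity
\[
h_t(x)=\frac{2e^{-(1-\gamma(0))t}}{\gamma(0)t}\,q^{-|x|/2}\,h^{\mathbb{Z}}_{t\gamma(0)}(|x|+1)\Big[(|x|+1)+\sum_{k=1}^{\infty}q^{-k}(|x|+2k+1)\frac{h^{\mathbb{Z}}_{t\gamma(0)}(|x|+2k+1)}{h^{\mathbb{Z}}_{t\gamma(0)}(|x|+1)}\Big].
\]
Here the $k=0$ term has been separated from the rest, and everything is normalized by $h^{\mathbb{Z}}_{t\gamma(0)}(|x|+1)$. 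It then suffices to show that the bracket, divided by $(1+|x|)$, tends to $1+C$.

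Next I would split $(|x|+2k+1)=(|x|+1)+2k$, which splits the sum into two pieces. The first piece is $(|x|+1)\sum_{k\ge1}q^{-k}\,h^{\mathbb{Z}}_{t\gamma(0)}(|x|+2k+1)/h^{\mathbb{Z}}_{t\gamma(0)}(|x|+1)$. After division by $1+|x|$, Lemma \ref{Lemma:heatZasymp} shows that it converges to $C$, in each of the three regimes for $|x|/t$. The second piece is $\sum_{k\ge1}2k\,q^{-k}\,h^{\mathbb{Z}}_{t\gamma(0)}(|x|+2k+1)/h^{\mathbb{Z}}_{t\gamma(0)}(|x|+1)$. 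By the monotonicity $h^{\mathbb{Z}}_{t\gamma(0)}(|x|+2k+1)\le h^{\mathbb{Z}}_{t\gamma(0)}(|x|+1)$ quoted from \cite{CMS} in the proof of the lemma, this piece is bounded by $\sum_k 2kq^{-k}<\infty$. It is therefore $O(1)$, and divided by $1+|x|$ it tends to $0$ because $|x|\to\infty$.

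Adding the two pieces, the bracket equals $(1+|x|)(1+C+o(1))$, which gives the claimed asymptotic $h_t(x)\sim \frac{2}{\gamma(0)}(C+1)\frac{e^{-(1-\gamma(0))t}}{t}(1+|x|)q^{-|x|/2}h^{\mathbb{Z}}_{t\gamma(0)}(|x|+1)$. Since $C+1\ge1$, the relation $\sim$ is meaningful, and the $o(1)$ can be absorbed multiplicatively.

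The main subtlety is that $C$ is only defined along sequences for which $|x|/t$ has a limit in $[0,\infty]$. I would therefore state the result along such sequences, or argue by subsequences: any sequence with $|x|,t\to\infty$ has a subsequence along which $|x|/t$ converges in $[0,\infty]$, and on that subsequence the above applies with the corresponding $C$. The remaining point to check is that the exchange of limit and sum in the lemma is legitimate. This is handled by the same dominated convergence argument, with the uniform majorant $q^{-k}$, respectively $kq^{-k}$ for the second piece.
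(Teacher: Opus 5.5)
Your proposal is correct and follows essentially the paper's proof. The paper also isolates the $k=0$ term in Proposition~\ref{heatkernel}(ii) and writes $\frac{|x|+2k+1}{|x|+1}=1+k\,O(|x|^{-1})$, which is your exact split $1+\frac{2k}{|x|+1}$. It then discards the error using the bound of the $\mathbb{Z}$-heat-kernel quotient by $1$ together with summability of $kq^{-k}$, and concludes via Lemma~\ref{Lemma:heatZasymp}. Your remark that the statement should be read along sequences for which $|x|/t$ converges in $[0,\infty]$ (or via subsequences) makes explicit an assumption the paper leaves implicit.
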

	\begin{proof}
		By Proposition \ref{heatkernel}, 
		\begin{align}\label{eq:quotZheat1}
			\frac{\gamma(0)t}{2e^{-(1-\gamma(0))t}}q^{|x|/2}	h_t(x)&=\sum_{k=0}^{\infty}q^{-k}\,(|x|+2k+1)\,h_{t\gamma(0)}^{\mathbb{Z}}(|x|+2k+1) \notag\\
			&=(|x|+1)\,h_{t\gamma(0)}^{\mathbb{Z}}(|x|+1) \notag\\
			&+\sum_{k=1}^{\infty}q^{-k}\,(|x|+2k+1)\,h_{t\gamma(0)}^{\mathbb{Z}}(|x|+2k+1)\notag \\ 
			&=(|x|+1)\,h_{t\gamma(0)}^{\mathbb{Z}}(|x|+1)\times\\ \notag
			&\times \left\{1+\sum_{k=1}^{\infty}q^{-k}\,\frac{|x|+2k+1}{|x|+1}\,\frac{h_{t\gamma(0)}^{\mathbb{Z}}(|x|+2k+1)}{h_{t\gamma(0)}^{\mathbb{Z}}(|x|+1)}\right\}. 
		\end{align}
		On the one hand, we have
\[
\frac{|x|+2k+1}{|x|+1}=1+k\,O(|x|^{-1}).
\] On the other hand, as already mentioned (see for instance \cite[p.4282]{CMS}), the quotient of the heat kernels on $\mathbb{Z}$ in \eqref{eq:quotZheat1} is bounded by $1$ for all $|x|, t, k$. Since $kq^{-k}$ is summable, one can split the sum \eqref{eq:quotZheat1} in a main term and an error term. Taking the limit for $t, |x|\to \infty$, and invoking Lemma \ref{Lemma:heatZasymp}, we get the desired result.
\end{proof}
	
\begin{remark}
	Let us compare this asymptotic with known global, in time and space, heat kernel bounds. According to \cite[Proposition 2.5]{CMS}, for all $x\in \mathcal{T}$ and $t>0$, it holds
	\begin{equation}\label{eq:constants}
\frac{q+1}{\sqrt{q}}\leq	\frac{h_t(x)}{\frac{e^{-(1-\gamma(0))t}}{t}\, \varphi_0(x)\, h_{t\gamma(0)}^{\mathbb{Z}}(|x|+1) }\leq 
	\frac{\sqrt{q}(q+1)^3}{(q-1)^3}.
    \end{equation}
Observe first that by \eqref{eq:sphericalT}, we have
	\begin{align}\label{eq:phi0_asympt}
	\frac{(1+|x|)q^{-|x|/2}}{\varphi_0(x)}&=\frac{q+1}{q-1}+O(|x|^{-1}),
	\end{align}
	so, since $\gamma(0)=2\sqrt{q}/(q+1)$, by Theorem \ref{prop:heat_asymp_ell1}, we can also write 
	\[\frac{h_t(x)}{\frac{e^{-(1-\gamma(0))t}}{t}\, \varphi_0(x)\, h_{t\gamma(0)}^{\mathbb{Z}}(|x|+1) }\sim(C+1)\, \frac{(q+1)^2}{\sqrt{q}(q-1)}.
	\]
    Therefore, this means that for any $C\geq 0$, the lower bound in \eqref{eq:constants} is immediately satisfied, while above we should have
    \[
    C+1\leq \frac{q(q+1)}{(q-1)^2}.
    \]
    
   For the constants $C=0$ or $C=(q-1)^{-1}$ of Lemma \ref{Lemma:heatZasymp} and Theorem \ref{prop:heat_asymp_ell1}, this is clear. Consider now the constant of the first case in Lemma \ref{Lemma:heatZasymp}; for any $s_0>0$, write \[s^*:=q^{-1} \left(\frac{s_0}{1+\sqrt{1+s_0^2}}\right)^{2}\in (0,q^{-1}).\] Then  $$C+1=\frac{1}{1-s^*} \in \left(1,\frac{q}{q-1}\right),$$
	since the quantity $\frac{1}{1-s^*}$ is increasing in $s^*$. Thus, we are in the regime of constants determined in \cite{CMS}.
\end{remark}
	
	\bigskip

	Finally, the next result concerns the long-time behavior of the heat kernel when it is spatially concentrated around the origin, and more precisely, when $|x| = o(\sqrt{t})$. Notice that $x$ need not travel to infinity, so this case is not covered by Theorem \ref{prop:heat_asymp_ell1}.
	
	\begin{proposition}\label{prop:heat_asymp_p>2} Let $x\in \mathcal{T}$ be such that $|x|\leq \rho(t)$, with  $\frac{\rho(t)}{\sqrt{t}}\rightarrow 0$ as $t\rightarrow \infty.$
		Then
		\[
		h_t(x)=\frac{\sqrt{2}}{\sqrt{\pi}}\frac{q(q+1)}{(q-1)^2}\gamma(0)^{-3/2}\, e^{-(1-\gamma(0))t}\,t^{-3/2}\, \varphi_0(x)\left(1+O\left(\frac{\rho(t)}{\sqrt{t}}\right)\right). 
		\]
	\end{proposition}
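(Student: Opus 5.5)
The plan is to work directly with the spherical inversion formula \eqref{inversionT} and apply Laplace's method at $\lambda=0$, rather than going through Proposition \ref{heatkernel} and Theorem \ref{thm:heat Z asym}. Write $1-\gamma(\lambda)=(1-\gamma(0))+\gamma(0)\bigl(1-\cos(\lambda\log q)\bigr)$ and factor out $e^{-(1-\gamma(0))t}$. This leaves
\[
h_t(x)=\frac{q\log q}{2\pi(q+1)}\,e^{-(1-\gamma(0))t}\int_0^{\tau/2}e^{-t\gamma(0)(1-\cos(\lambda\log q))}\varphi_\lambda(x)\,\frac{d\lambda}{|\mathbf{c}(\lambda)|^2}.
\]
The phase $1-\cos(\lambda\log q)$ on $[0,\tau/2]$ vanishes only at $\lambda=0$. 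At the endpoint $\tau/2$ it equals $2$, so that neighbourhood contributes only $O(e^{-ct})$. I will therefore split the integral at a small fixed $\delta>0$. On $[\delta,\tau/2]$ I use $|\varphi_\lambda(x)|\le\varphi_0(x)$, which follows from \eqref{eq:integralrepspherical} since $p^{1/2}>0$. This bounds that piece by $O(e^{-ct})\varphi_0(x)$, which is negligible against $t^{-3/2}\varphi_0(x)$.

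For the main piece on $[0,\delta]$ I will compare each factor with its leading term. First, $t\gamma(0)(1-\cos(\lambda\log q))=\tfrac{a}{2}\lambda^2+O(t\lambda^4)$ with $a=t\gamma(0)(\log q)^2$. Second, \eqref{eq:Planchereldens} gives $|\mathbf{c}(\lambda)|^{-2}=\frac{4(q+1)^2(\log q)^2}{(q-1)^2}\lambda^2\bigl(1+O(\lambda^2)\bigr)$. Third, for the spherical function I use the integral representation \eqref{eq:integralrepspherical}:
\[
\varphi_\lambda(x)-\varphi_0(x)=\int_\Omega p^{1/2}(x,\omega)\bigl(q^{i\lambda h_\omega(x)}-1\bigr)\,d\nu(\omega).
\]
Combined with $|h_\omega(x)|\le|x|$ from \eqref{eq:Busemann_bds} and positivity of $p^{1/2}$, this yields
\[
|\varphi_\lambda(x)-\varphi_0(x)|\le \lambda\log q\,|x|\,\varphi_0(x).
\]
This is the key step: it makes the error relative to $\varphi_0(x)$ uniformly in $x$.

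Substituting these and extending the Gaussian integral to $[0,\infty)$ at exponentially small cost, the main term is
\[
\frac{q\log q}{2\pi(q+1)}\cdot\frac{4(q+1)^2(\log q)^2}{(q-1)^2}\int_0^\infty\lambda^2e^{-a\lambda^2/2}\,d\lambda=\frac{2q(q+1)(\log q)^3}{\pi(q-1)^2}\sqrt{\tfrac{\pi}{2}}\,a^{-3/2}.
\]
Since $a^{-3/2}=t^{-3/2}\gamma(0)^{-3/2}(\log q)^{-3}$, this equals $\sqrt{2/\pi}\,\frac{q(q+1)}{(q-1)^2}\gamma(0)^{-3/2}t^{-3/2}$, which is exactly the stated constant. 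The error terms are handled as follows.
\begin{itemize}
\item The $O(t\lambda^4)$ correction in the exponent and the $O(\lambda^2)$ correction in the density give, after rescaling $\lambda=s/\sqrt t$, relative errors $O(t^{-1})$.
\item The spherical-function error is bounded by $|x|\varphi_0(x)\int_0^\infty\lambda^3e^{-c t\lambda^2}\,d\lambda\asymp|x|\,t^{-2}\varphi_0(x)$. Relative to the main term this is $O(|x|/\sqrt t)=O(\rho(t)/\sqrt t)$.
\end{itemize}

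The step I expect to need the most care is this uniform-in-$x$ control of $\varphi_\lambda/\varphi_0$, since a naive Taylor expansion of the explicit formula \eqref{eq:sphericalT} degenerates as $\lambda\to0$ (the $\mathbf{c}$-function has a pole there). The boundary-integral bound above avoids that issue. The remaining point to check is that the $O(t^{-1})$ errors are absorbed into $O(\rho(t)/\sqrt t)$. This holds whenever $\rho(t)\gtrsim t^{-1/2}$, which can always be assumed by enlarging $\rho$. If $\rho$ is smaller than that, the error is simply $O(t^{-1})$ and the statement should be read with that understanding.
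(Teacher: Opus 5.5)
Your proposal is correct and follows essentially the same route as the paper. You split the inversion integral into a $\varphi_0(x)$ main term, evaluated by a Gaussian computation, and a remainder controlled by $|\varphi_\lambda(x)-\varphi_0(x)|\lesssim \lambda|x|\varphi_0(x)$ via the boundary integral representation and $|h_\omega(x)|\le|x|$. The only differences are cosmetic: you Taylor-expand the phase and the Plancherel density after a fixed cutoff, whereas the paper substitutes $\mu=\sin(\lambda\tfrac{\log q}{2})$ exactly in Lemma \ref{lemma:Step5}. Your caveat about needing $\rho(t)$ not too small to absorb the $x$-independent errors is equally implicit in the paper, whose main term carries a $1+O(t^{-1/2})$ factor.
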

	
	We shall first need an auxiliary lemma.
	\begin{lemma}\label{lemma:Step5}
		For any $\delta>0$, we have 
		\[
		\int_{0}^{\tau/2} e^{-\delta t \sin^2(\lambda \frac{\log q}{2})}\,\sin^2(\lambda \log q)\, d\lambda= \frac{2\sqrt{\pi}}{\log q}\, \delta^{-3/2}\, t^{-3/2} +O(t^{-5/2}), \quad \text{as} \quad t\rightarrow \infty. 
		\]
	\end{lemma}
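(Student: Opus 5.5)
The plan is to reduce the integral to a Laplace-type integral with a Gaussian weight by two elementary substitutions, and then extract the leading term and the error.

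\emph{Step 1: substitutions.} Set $u=\lambda\frac{\log q}{2}$. Since $\tau/2=\pi/\log q$, the range $\lambda\in[0,\tau/2]$ becomes $u\in[0,\pi/2]$, and $d\lambda=\frac{2}{\log q}\,du$. Using $\sin^2(\lambda\log q)=\sin^2(2u)=4\sin^2u\cos^2u$, the integral equals
\[
\frac{2}{\log q}\int_0^{\pi/2} e^{-\delta t\sin^2 u}\,4\sin^2u\cos^2u\,du .
\]
Next set $v=\sin u$, so that $dv=\cos u\,du$ and $\cos u=\sqrt{1-v^2}$. The integral becomes
\[
\frac{8}{\log q}\int_0^1 e^{-\delta t v^2}\,v^2\sqrt{1-v^2}\,dv .
\]

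\emph{Step 2: Laplace asymptotics.} Write $\sqrt{1-v^2}=1+r(v)$ with $|r(v)|\le v^2$ on $[0,1]$.

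The main piece is obtained by extending the range to $[0,\infty)$. The added tail $\int_1^\infty v^2e^{-\delta t v^2}\,dv$ is $O(e^{-\delta t/2})$, hence negligible. The full integral is exact:
\[
\int_0^\infty v^2e^{-\delta tv^2}\,dv=\frac{\sqrt\pi}{4}(\delta t)^{-3/2}.
\]
Multiplying by $\frac{8}{\log q}$ gives exactly $\frac{2\sqrt\pi}{\log q}\,\delta^{-3/2}t^{-3/2}$.

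The remainder is bounded by
\[
\frac{8}{\log q}\int_0^\infty v^4e^{-\delta tv^2}\,dv=\frac{8}{\log q}\cdot\frac{3\sqrt\pi}{8}(\delta t)^{-5/2}=O(t^{-5/2}).
\]
This yields the claimed expansion.

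\emph{Main obstacle.} There is no real obstacle here; the only care needed is the bookkeeping of constants in the two changes of variables. The endpoint $v=1$, where $\sqrt{1-v^2}$ is not smooth, only affects exponentially small terms, so it does not spoil the $O(t^{-5/2})$ error.
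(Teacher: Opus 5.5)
Your proof is correct and follows essentially the same route as the paper. The substitution $\mu=\sin(\lambda\frac{\log q}{2})$, which you split into two steps, reduces the integral to $\frac{8}{\log q}\int_0^1 e^{-\delta t\mu^2}\mu^2\sqrt{1-\mu^2}\,d\mu$. Then $\sqrt{1-\mu^2}=1+O(\mu^2)$, together with the Gaussian moment $\frac{\sqrt\pi}{4}(\delta t)^{-3/2}$ and an exponentially small tail, gives the result; your explicit bound $|r(v)|\le v^2$ and the fourth-moment computation simply make explicit the paper's appeal to its estimate for $\int e^{-ct\lambda^2}\lambda^m\,d\lambda$.
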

	\begin{proof}
		Observe first that for any $c>0$ and any $m\geq 0$, we have
		\begin{equation}\label{eq:auxint}
			\int_{0}^{\tau/2} e^{-ct\lambda^2}\, \lambda^{m}\, d\lambda \asymp_{c,m, \tau} t^{-\frac{m+1}{2}}, \quad t>1.
		\end{equation}
		Next, using that $\sin \alpha=2\sin\frac{\alpha}{2}\cos\frac{\alpha}{2} $ and a change of variables ($\mu=\sin(\lambda \frac{\log q}{2})$), 
		\begin{align*}
			\int_{0}^{\tau/2} e^{-\delta t \sin^2(\lambda \frac{\log q}{2})}\,\sin^2(\lambda \log q)\, d\lambda&=\frac{8}{\log q}\int_{0}^{1} e^{-\delta t\mu^2}\,\mu^2\, \sqrt{1-\mu^2}\, d\mu\\
			&=\frac{8}{\log q}\int_{0}^{1} e^{-\delta t\mu^2}\,\mu^2\, (1+O(\mu^2)) \, d\mu.
		\end{align*}
		The error term is $O(t^{-5/2})$, owing to \eqref{eq:auxint}, so let us check the first integral; after a change of variables ($\sqrt{\delta}\sqrt{t}\mu=u$) we get
		\begin{align*}\frac{8}{\log q}\int_{0}^{1} e^{-\delta t\mu^2}\,\mu^2\, d\mu=\frac{8}{\log q}\delta^{-3/2}t^{-3/2}\int_{0}^{\sqrt{\delta}\sqrt{t}}u^2\, e^{-u^2}\, du.
		\end{align*}
		Since
		\begin{align*}
			\int_{0}^{\sqrt{\delta}\sqrt{t}}u^2\, e^{-u^2}\, du&=\int_0^{\infty}u^2\, e^{-u^2}\, du-\int_{\sqrt{\delta}\sqrt{t}}^{\infty}u^2\, e^{-u^2}\, du\\
			&=\frac{\sqrt{\pi}}{4}+O(e^{-\delta t/2}),
		\end{align*}
		the proof is complete.
	\end{proof}
	
	\begin{proof}[Proof of Proposition \ref{prop:heat_asymp_p>2}]
		The proof follows ideas from \cite[Step 5]{AJ99}. Recall first that by the inversion formula \eqref{inversionT}, and \eqref{gammalambda0} for $\gamma(\lambda)$,
		\begin{align}
			\frac{2\pi(q+1)}{q\log q}\, h_t(x)&=\int_{0}^{\tau /2}e^{-t(1-\gamma(\lambda))}\varphi_{\lambda}(x)\frac{d \lambda}{|\textbf{c}(\lambda)|^2} \quad \notag\\
			&=e^{-(1-\gamma(0))t}\, \varphi_0(x)\int_{0}^{\tau /2}e^{-t\gamma(0)(1-\cos(\lambda \log q))}\frac{d \lambda}{|\textbf{c}(\lambda)|^2} \notag\\
			&+e^{-(1-\gamma(0))t}\int_{0}^{\tau /2}e^{-t\gamma(0)(1-\cos(\lambda \log q))}(\varphi_{\lambda}(x)-\varphi_0(x))\frac{d \lambda}{|\textbf{c}(\lambda)|^2} \notag \\
			&=I_t(x)+J_t(x). \label{eq:I+J}
		\end{align}	
		
        We will show that the main contribution comes from the term  $I_t(x)$.
		According to \eqref{eq:Planchereldens}, we can write 
		\[
		|\textbf{c}(\lambda)|^{-2}=4(q+1)^2\frac{\sin^2(\lambda \log q)}{b(\lambda)}, 
        \]
        where
        \[b(\lambda):=(q+1)^2\sin ^2(\lambda \log q)+(q-1)^2 \cos ^2(\lambda \log q).
		\]
		Write $1-\cos(\lambda \log q)=2\sin^2(\lambda\frac{\log q}{2})$ and split
		\begin{align*}
			I_t(x)&=4(q+1)^2\,\frac{1}{b(0)}\,e^{-(1-\gamma(0))t}\, \varphi_0(x)\int_0^{\tau/2}e^{-2t\gamma(0)\sin^2(\lambda \frac{\log q}{2})}\sin^2(\lambda \log q)\, d\lambda \\
			&+4(q+1)^2\,e^{-(1-\gamma(0))t}\ \varphi_0(x) \times\\
            &\times\int_0^{\tau/2}e^{-2t\gamma(0)\sin^2(\lambda \frac{\log q}{2})}\left(\frac{1}{b(\lambda)}-\frac{1}{b(0)}\right)\sin^2(\lambda \log q)\, d\lambda.
		\end{align*}
		One checks that $$b(\lambda)\asymp 1, \quad b(\lambda)-b(0)=O(\lambda),$$ since $b'$ is bounded. Then, by \eqref{eq:auxint}, the second integral in $I_t(x)$ is $O(t^{-2})$. On the other hand, the first integral can be computed using Lemma \ref{lemma:Step5} for $\delta=2\gamma(0)$;  so altogether, using that $b(0)=(q-1)^2$,
		\[
		I_t(x)=\frac{4\sqrt{\pi}}{\sqrt{2}}\frac{(q+1)^2}{\log q(q-1)^2}\gamma(0)^{-3/2}\,t^{-3/2}\, e^{-(1-\gamma(0))t}\, \varphi_0(x)\,(1+O(t^{-1/2})).
		\]
		
		Let us now discuss $J_t(x)$ and show it is smaller than $I_t(x)$. For $x\in \mathcal{T}$ fixed, set
		\[
		\Phi_x(\lambda):=\varphi_{\lambda}(x), \quad \lambda \in [0, \tau/2].
		\]
		Then 
		\[
		|\Phi_x(\lambda)-\Phi_x(0)|\leq \lambda\, \left|\frac{d\Phi_x}{d\lambda}(\lambda_0)\right|
		\]
		for some $\lambda_0\in (0, \tau/2)$. Notice that by the integral representation formula \eqref{eq:integralrepspherical} and the bounds \eqref{eq:Busemann_bds}, we have
		\[
		\frac{d\Phi_x}{d\lambda}(\lambda_0)= i\log q\int_{\Omega} h_{\omega}(x)\, q^{(\frac{1}{2}+i\lambda_0)h_{\omega}(x)} \,d\nu(\omega) =O(|x|\,\varphi_0(x)).
		\]
		Therefore, using that $1-\cos(\lambda \log q)\asymp \lambda^2$, $|\textbf{c}(\lambda)|^{-2}\asymp \lambda^2$, we get for some $c>0$,
		\begin{align*}
			J_t(x)&\lesssim e^{-t(1-\gamma(0))}\, \int_{0}^{\tau /2}e^{-t(1-\gamma(\lambda))}\left|\varphi_{\lambda}(x)-\varphi_0(x)\right|\frac{d \lambda}{|\textbf{c}(\lambda)|^2}\\
			&\lesssim e^{-t(1-\gamma(0))}\,|x|\, \varphi_0(x)\int_{0}^{\tau /2}e^{-t\gamma(0)(1-\cos(\lambda\log q))}\lambda^3\,d\lambda\\
			&\lesssim e^{-t(1-\gamma(0))}\, |x|\, \varphi_0(x)\int_{0}^{\tau /2}e^{-ct\gamma(0)\lambda^2}\, \lambda^3\, d\lambda\\
			&\lesssim e^{-t(1-\gamma(0))}\,t^{-3/2}\, \varphi_0(x)\, \frac{|x|}{\sqrt{t}},
		\end{align*}
		the last inequality owing to \eqref{eq:auxint}. Since $|x|\leq \rho(t)$ and $\frac{\rho(t)}{\sqrt{t}}\rightarrow 0$ as $t\rightarrow +\infty$, we get
		$$J_t(x)=I_t(x)\, O\left(\frac{\rho(t)}{\sqrt{t}}\right),$$
		which proves the claimed heat kernel asymptotics  by \eqref{eq:I+J}.
	\end{proof}
	
\begin{remark} 
 For the arguments of Theorem \ref{prop:heat_asymp_ell1}, especially for \eqref{eq:w^2}, the fact that $|x|\to \infty$ is required; this shows that essentially, for large space and time, the constant $C$ in the heat kernel asymptotics in Theorem \ref{prop:heat_asymp_ell1}, is dictated by the function $\psi$ defined in \eqref{eq:psi}, due to \eqref{eq:heatZw}. The relevance of this function will also become apparent in later sections, where quotients of the heat kernel on $\mathcal{T}$ are discussed.

Furthermore, for asymptotics, one could work on the frequency side (as e.g. in \cite[Section 5]{AJ99} for symmetric spaces). More precisely, one could exploit the expansion of the spherical functions \eqref{eq:sphericalT}, use the explicit information for the Harish-Chandra $\textbf{c}$ function in \eqref{cfunc}, and work with oscillatory integrals; this would be closer in spirit to the last proposition. We prefer, however, to stick with the simpler approach of Theorem \ref{prop:heat_asymp_ell1} for the purposes of this work.
\end{remark}

	\subsection{Asymptotics for ratios of heat kernels}
	This subsection contains some technical results for ratios of heat kernels, which are to be used later. In fact, this subsection will be dealing with $|x|=|x_t|\to \infty$ as $t\to \infty$, with $|x_t|=O(t)$. To this end, let us start with a preparatory lemma. Already from the result that follows, it becomes clear that in this range, the asymptotic behavior of the quotient of the heat kernel depends on the (bounded) ratio $|x|/t$; moreover, the ratio limit is related to the \textit{Busemann} function.

   \begin{lemma}\label{lemma:quotient_prep}
    Let $x, y\in \mathcal{T}$ such that $|x|=|x_t|\to \infty$ as $t\to \infty$, with $|x_t|=O(t)$, and $d(y, o)< \rho$ for some $\rho>0$. Then
\[
\frac{h_t(d(x,y))}{h_t(d(x,o))}=q^{\frac{1}{2}(d(x,o)-d(x,y))}\, \left( \frac{1+\sqrt{1+w^2}}{w}\right)^{d(x,o)-d(x,y)}(1+O(|x|^{-1})),
\]
where $w=\frac{t\gamma(0)}{|x|+1}(1+O(|x|^{-1}))$.
    The implied constant in the error terms depends only on the geometry and the size of $y$.
    \end{lemma}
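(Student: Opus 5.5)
Set $m:=d(x,o)-d(x,y)$ and note $|m|\le d(y,o)<\rho$; the plan is to compare the two heat kernel values through the representation of Proposition~\ref{heatkernel}(ii) and then use Theorem~\ref{thm:heat Z asym} for the ratio of the $\mathbb{Z}$-kernels.

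First, write $h_t(d(x,y))/h_t(d(x,o))$ using Proposition~\ref{heatkernel}(ii). The prefactor $e^{-(1-\gamma(0))t}/(\gamma(0)t)$ cancels. The factors $q^{-|\cdot|/2}$ give exactly $q^{\frac12(d(x,o)-d(x,y))}$. We are left with the ratio of the two series $\sum_k q^{-k}(n+2k+1)h^{\mathbb{Z}}_{t\gamma(0)}(n+2k+1)$ for $n=d(x,y)$ and $n=|x|$.

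Second, as in the proof of Theorem~\ref{prop:heat_asymp_ell1}, factor each series as $(n+1)h^{\mathbb{Z}}_{t\gamma(0)}(n+1)\,\{1+S_n\}$. Here $S_n=\sum_{k\ge1}q^{-k}\frac{n+2k+1}{n+1}\frac{h^{\mathbb{Z}}(n+2k+1)}{h^{\mathbb{Z}}(n+1)}$. The factor $(d(x,y)+1)/(|x|+1)$ is $1+O(|x|^{-1})$ because $|m|<\rho$. The main task is then to show that $(1+S_{d(x,y)})/(1+S_{|x|})=1+O(|x|^{-1})$ uniformly in this range. For this I would compare the summands term by term. Each ratio $h^{\mathbb{Z}}(n+2k+1)/h^{\mathbb{Z}}(n+1)$ has, by \eqref{eq:heatZw}, the form $\exp\{2k(1+kO(n^{-1}))\psi(w_n)\}$ times $1+kO(t^{-1})$. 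Since $\psi'$ is bounded on compacts and, via $|x|=O(t)$, $w$ stays bounded below, shifting $n$ by $|m|<\rho$ changes $\psi(w_n)$ by $O(|x|^{-1})$. This gives a summand error $O(k^2|x|^{-1})$. Because $\psi\le0$, the summands are also dominated by $k q^{-k}$, so the total error is $O(|x|^{-1})$. This is the step I expect to be the main obstacle: the sharp theorem only gives $\sim$ and not a quantified error, so the honest route is to use the mean-value form directly on the exponent of $F$ and keep the $(1+j^2+t^2)^{1/4}$ factors explicit. If Theorem~\ref{thm:heat Z asym} gives no rate, I would instead use the exact Bessel recursion/ratio monotonicity of $I_\nu$ to control $h^{\mathbb{Z}}(n+1)/h^{\mathbb{Z}}(n)$.

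Third, compute the ratio $h^{\mathbb{Z}}_{t\gamma(0)}(d(x,y)+1)/h^{\mathbb{Z}}_{t\gamma(0)}(|x|+1)$ with $F$ from Theorem~\ref{thm:heat Z asym}. The $(1+j^2+t^2)^{-1/4}$ factors have ratio $1+O(|x|/t^2+|x|^{-1})=1+O(|x|^{-1})$. For the exponent, the difference $t\gamma(0)[\zeta(t\gamma(0)/(d(x,y)+1))-\zeta(t\gamma(0)/(|x|+1))]$ is handled by the mean value theorem exactly as in \eqref{eq:w def}–\eqref{eq:heatZw}, now with $2k$ replaced by $m$. It equals $-m\,\psi(w)(1+O(|x|^{-1}))$ with $w$ between the two endpoints, so $w=\frac{t\gamma(0)}{|x|+1}(1+O(|x|^{-1}))$. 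Exponentiating, $e^{-m\psi(w)}=\bigl(\frac{1+\sqrt{1+w^2}}{w}\bigr)^{m}$. The multiplicative error $e^{O(|x|^{-1})m\psi(w)}$ is $1+O(|x|^{-1})$ because $|m|<\rho$ and $\psi(w)$ stays bounded; this uses $|x|=O(t)$, which keeps $w$ away from $0$. Collecting the three factors gives the claim, with all implied constants depending only on $q$ and $\rho$.
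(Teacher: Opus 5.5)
Your proposal is correct and follows the paper's proof. Both arguments reduce via Proposition~\ref{heatkernel} to the ratio of $\mathbb{Z}$-kernels at $d(x,y)+1$ and $|x|+1$, then apply Theorem~\ref{thm:heat Z asym} and the mean value theorem to $\zeta$ with $\zeta'(w)=-\psi(w)/w^2$.

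The one difference is the tail factors $1+S_n$. The paper disposes of them by citing Theorem~\ref{prop:heat_asymp_ell1}, where the two factors $C+1$ cancel only asymptotically and with no rate, whereas you compare them term by term. Your concern that Theorem~\ref{thm:heat Z asym} supplies no rate is justified, and it affects the paper's argument as well. Your Bessel fallback does close that gap: the recursion $I_{n-1}(s)-I_{n+1}(s)=\frac{2n}{s}I_n(s)$ together with the Tur\'an monotonicity $I_{n+1}/I_n\le I_n/I_{n-1}$ gives $\frac{s}{n+1+\sqrt{(n+1)^2+s^2}}\le\frac{I_{n+1}(s)}{I_n(s)}\le\frac{s}{n+\sqrt{n^2+s^2}}$. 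Hence $h^{\mathbb{Z}}_{s}(n+1)/h^{\mathbb{Z}}_{s}(n)=\frac{w_n}{1+\sqrt{1+w_n^2}}\,(1+O(n^{-1}))$ with $w_n=s/n$, uniformly in $s$. One small fix: when shifting $n$, use $0<w\psi'(w)<1$ rather than boundedness of $\psi'$ on compacts, since $w$ may tend to infinity.
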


\begin{remark}
    Notice that $w$ stays bounded below away from zero. Since $w\mapsto (1+\sqrt{1+w^2})/w$ is bounded in $(1,\infty)$ and $d(x,o)-d(x,y)=O(|y|)=O(1)$, the expression above is justified.
\end{remark}

    \begin{proof}  We begin by 
    noticing that the triangle inequality and the fact that $d(x_t,o)\to \infty$, yield $d(x,y)\asymp d(x,o)$ for $t$ large enough. 
    Using Theorem \ref{prop:heat_asymp_ell1}, we can write 
		\begin{equation}\label{eq:quotientforL1}
			\frac{h_{t}(d(x,y))}{h_{t}(d(x,o))}\sim q^{\frac{1}{2}(d(x,o)-d(x,y))}\,\frac{1+d(x,y)}{1+d(x,o)}\,  \frac{h_{t\gamma(0)}^{\mathbb{Z}}(d(x,y)+1)}{h_{t\gamma(0)}^{\mathbb{Z}}(d(x,o)+1)}, \quad t\to \infty.
		\end{equation}
		The triangle inequality clearly implies that $$\frac{1+d(x,y)}{1+d(x,o)} =1+O(|x|^{-1}),$$ so it remains to study the asymptotic behavior of the quotient of the heat kernels on $\mathbb{Z}$. To this end, we work as in the proof of Theorem \ref{prop:heat_asymp_ell1}; recall that we had set in \eqref{eq:zeta}
		\[
		\zeta(z)= \frac{\xi(z)}{z}, \quad z>0,
		\]
		and that by Theorem \ref{thm:heat Z asym}, one gets
		\begin{align}
		\frac{h_{t\gamma(0)}^{\mathbb{Z}}(d(x,y)+1)}{h_{t\gamma(0)}^{\mathbb{Z}}(d(x,o)+1)}&\sim \left( \frac{1+\gamma(0)^2t^2+(d(x,y)+1)^2}{1+\gamma(0)^2t^2+(d(x,o)+1)^2}\right)^{-1/4} \times \label{eq: heatZL2_0} \\ &\exp\left\{t\gamma(0) \left[ \zeta\left(\frac{t\gamma(0)}{d(x,y)+1}\right)- \zeta\left(\frac{t\gamma(0)}{d(x,o)+1}\right) \right]\right\}. \notag
		\end{align}
		One can check that by the boundedness of $y$ and the fact that $d(x,o)$ and $d(x,y)$ are $O(t)$, it holds
        \[\frac{1+\gamma(0)^2t^2+(d(x,y)+1)^2}{1+\gamma(0)^2t^2+(d(x,o)+1)^2}=1+O(t^{-1}).
        \]
        Hence, the term in \eqref{eq: heatZL2_0} raised to the $-1/4$ is
        \[
		1+O(t^{-1})=1+O(|x|^{-1}), 
		\] 
	as $t\rightarrow \infty$, given that $|x|=O(t)$. Therefore, it remains to treat the exponential term appearing in \eqref{eq: heatZL2_0}. We follow certain arguments already presented in the proof of Theorem \ref{prop:heat_asymp_ell1}.
		
		By the mean value theorem, we have 
		\[
		\zeta\left(\frac{t\gamma(0)}{d(x,y)+1}\right)- \zeta\left(\frac{t\gamma(0)}{d(x,o)+1}\right) =\zeta'(w)\, t\gamma(0)\, \frac{d(x,o)-d(x,y)}{(d(x,o)+1)(d(x,y)+1)},
		\]
		for some $w$ between $\frac{t\gamma(0)}{d(x,o)+1}$ and $\frac{t\gamma(0)}{d(x,y)+1}$; thus for some $\xi\in(0,1)$, we may write
        \begin{align}
        w&=t\gamma(0)\left( \xi\frac{1}{d(x,y)+1}+(1-\xi)\frac{1}{d(x,o)+1}\right) \notag\\
        &=\frac{t\gamma(0)}{d(x,o)+1}\left(1+\xi\,\frac{d(x,o)-d(x,y)}{d(x,y)+1} \right) \notag\\
        &=\frac{t\gamma(0)}{|x|+1}(1+O(|x|^{-1})). \label{eq:**}
        \end{align}
        We remark that in case the ratio $\frac{t}{|x|}\to \infty$ goes to infinity, we understand the right-hand side as comparable to $\frac{t}{|x|}$.
           Therefore, since $d(x,y)=d(x,o)+O(1)$, we have
           \begin{align*}
            \frac{1}{w^2}\, \frac{t\gamma(0)}{d(x,o)+1} \, \frac{t\gamma(0)}{d(x,y)+1} &= (|x|+1)^2 \frac{1}{|x|+1}\frac{1}{|x|+O(1)}(1+O(|x|^{-1}))\\
            &=1+ O(|x|^{-1}). 
        \end{align*}
           Combining this with the fact that by \eqref{eq:zetader}, $\zeta'(w)=-\frac{1}{w^2}\psi(w)$, where, as defined in \eqref{eq:psi},
           $$ \psi(w)=\log\left(\frac{w}{1+\sqrt{1+w^2}}\right).$$
            In fact, since $|x|=O(t)$, we have $w\gtrsim 1$, so $\psi$ is bounded.
           It follows from \eqref{eq: heatZL2_0} that
        \begin{align*}
            t\gamma(0) \left[ \zeta\left(\frac{t\gamma(0)}{|x|+2k+1}\right)-\zeta\left(\frac{t\gamma(0)}{|x|+1}\right) \right] &=-\psi(w)(d(x,o)-d(x,y))\times \notag\\
            &\times( 1+O(|x|^{-1})). 
        \end{align*}
           Since $y\in \mathcal{T}$ satisfies $|y|<\rho$, we may write
           \begin{align}
               e^{-\psi(w)(d(x,o)-d(x,y))(1+O(|x|^{-1})}&=e^{-\psi(w)(d(x,o)-d(x,y))}(1+O(|x|^{-1})).\label{eq:expterm}
           \end{align}
           Combining \eqref{eq: heatZL2_0} and \eqref{eq:expterm}, we get 
           \begin{align}\label{eq: heatQuotZ}
           \frac{h_{t\gamma(0)}^{\mathbb{Z}}(d(x,y)+1)}{h_{t\gamma(0)}^{\mathbb{Z}}(d(x,o)+1)}=e^{-\psi(w)(d(x,o)-d(x,y))}(1+O(|x|^{-1}),
           \end{align}
which, substituted into \eqref{eq:quotientforL1}, completes the proof.
\end{proof}

The next two results focus on the regions that will be mostly relevant for the study of caloric functions. The size of $|x|$ compared to that of $t$ now becomes crucial.
	\begin{proposition}\label{ratio_asym_L1}
		Let $C_0>0$ and $r(t)$ be any function such that $r(t) / t^{1 / 2} \rightarrow+\infty$, $r(t)/t\rightarrow 0$ as $t \rightarrow+\infty$. Then for $x\in \mathcal{T}$ such that $ C_0t-r(t)\leq|x|\leq C_0t+r(t)$, and $y\in \mathcal{T}$ bounded, we have the asymptotics
		\[
		\frac{h_t(d(x,y))}{h_t(d(x,o))}=\left(\sqrt{q}\, \frac{1+\sqrt{1+s_0^2}}{s_0}\right)^{d(x,o)-d(y,o)} \left(1+O\left(\frac{r(t)}{t}\right)\right),\]
		where $s_0=\frac{\gamma(0)}{C_0}$. The implied constant in the error term depends only on the geometry and the size of $y$.
	\end{proposition}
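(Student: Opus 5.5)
The plan is to derive the statement directly from Lemma \ref{lemma:quotient_prep}. We read the exponent on the right-hand side as $d(x,o)-d(x,y)$, which is the quantity produced by that lemma; as written, $d(x,o)-d(y,o)$ would be unbounded. Throughout, $|y|<\rho$ is fixed and $x$ ranges over the annulus $C_0t-r(t)\leq |x|\leq C_0t+r(t)$.

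First we check the hypotheses of Lemma \ref{lemma:quotient_prep}. Since $r(t)/t\to 0$, we have $|x|\to\infty$ and $|x|=O(t)$. The lemma then gives
\[
\frac{h_t(d(x,y))}{h_t(d(x,o))}=q^{\frac{1}{2}(d(x,o)-d(x,y))}\,g(w)^{d(x,o)-d(x,y)}\,\bigl(1+O(|x|^{-1})\bigr),
\qquad g(w):=\frac{1+\sqrt{1+w^2}}{w},
\]
with $w=\frac{t\gamma(0)}{|x|+1}\bigl(1+O(|x|^{-1})\bigr)$. In the annulus, $|x|^{-1}\asymp t^{-1}$. Moreover $t^{-1}=o(r(t)/t)$, because $r(t)\to\infty$ (indeed $r(t)/\sqrt t\to\infty$). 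So every $O(|x|^{-1})$ error can be absorbed into $O(r(t)/t)$.

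Next we locate $w$. We have $|x|+1=C_0t\bigl(1+O(r(t)/t)\bigr)$, hence $\frac{t\gamma(0)}{|x|+1}=\frac{\gamma(0)}{C_0}\bigl(1+O(r(t)/t)\bigr)$. Therefore $w=s_0\bigl(1+O(r(t)/t)\bigr)$. In particular, for $t$ large, $w$ lies in a compact interval $[s_0/2,2s_0]\subset(0,\infty)$.

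On that interval $g$ is smooth and bounded below by $1$. The mean value theorem gives $g(w)=g(s_0)\bigl(1+O(|w-s_0|)\bigr)=g(s_0)\bigl(1+O(r(t)/t)\bigr)$.

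The exponent satisfies $|d(x,o)-d(x,y)|\leq |y|<\rho$ by the triangle inequality. Raising to this bounded integer power keeps the relative error of the form $1+O(r(t)/t)$, with a constant depending only on $\rho$, $q$ and $C_0$. Multiplying by $q^{\frac12(d(x,o)-d(x,y))}$ gives the base $\sqrt q\,g(s_0)=\sqrt q\,\frac{1+\sqrt{1+s_0^2}}{s_0}$ and yields the claim.

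The main point needing care is uniformity. The implicit constants in Lemma \ref{lemma:quotient_prep}, namely in the expression for $w$ and in the final $1+O(|x|^{-1})$, must be uniform over $x$ in the whole annulus and over $|y|<\rho$, not just along a single sequence $x_t$. The lemma's own statement says these constants depend only on the geometry and on $|y|$, and its proof uses only that $|x|\to\infty$ with $|x|\lesssim t$ and $w\gtrsim1$. Both conditions hold uniformly on the annulus, since $|x|\geq C_0t/2$ and $|x|\leq 2C_0t$ for $t$ large. I would re-read the proof of the lemma once to confirm that no hidden dependence on the ratio $|x|/t$ beyond these bounds enters, for instance through $\psi(w)$, which is bounded here because $w$ stays in a compact set.
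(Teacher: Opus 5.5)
Your proposal is correct and follows the paper's proof essentially step by step: you apply Lemma \ref{lemma:quotient_prep}, show $w=s_0+O(r(t)/t)$ from $\bigl||x|-C_0t\bigr|\leq r(t)$ (absorbing the $O(|x|^{-1})=O(t^{-1})$ errors), pass this to $\frac{1+\sqrt{1+w^2}}{w}$ by the mean value theorem, and use that the exponent is bounded by $|y|$. Your reading of the exponent as $d(x,o)-d(x,y)$ is the right one (this is how the paper applies the result in Section \ref{sec:T}), and your justification of the mean value step, via boundedness of the derivative on a compact interval around $s_0$, is cleaner than the paper's remark that the derivative is ``bounded away from $0$''.
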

	
	\begin{proof}
    By Lemma \ref{lemma:quotient_prep} and the fact that $|x|/t\to C_0$, it suffices to determine 
    $$w=\frac{t\gamma(0)}{|x|+1}(1+O(|x|^{-1}))=\frac{t\gamma(0)}{|x|+1}+O(t^{-1}).$$
Since $||x|-C_0t|\leq r(t)$, it is immediate that
		\[\frac{t\gamma(0)}{|x|+1}=\frac{\gamma(0)}{C_0}+O\left(\frac{r(t)}{t}\right),
		\]
        therefore \[
        w=s_0+O\left(\frac{r(t)}{t}\right), \quad s_0=\frac{\gamma(0)}{C_0}
        \]
		By the mean value theorem for $z\mapsto \frac{1+\sqrt{1+z^2}}{z}$, $z>0$, and the fact that the derivative of this function is bounded away from $0$, we then get
        $$\frac{1+\sqrt{1+w^2}}{w}=\frac{1+\sqrt{1+s_0^2}}{s_0}+O\left(\frac{r(t)}{t}\right).$$
		Substituting into Lemma \ref{lemma:quotient_prep}, and using that $|d(x,o)-d(x,y)|\leq d(y,o)$ is bounded,  proves the claim.
	\end{proof}
	
	\begin{remark}\label{remark:q^{1/p}} Let $C_0=R_p=\frac{q^{1/p}-q^{1/p'}}{q+1}$, $1\leq p< 2$, be the constants of Proposition \ref{prop:MedSe_ellp} (i). Set
    $$
		\delta_p=\frac{1}{p}-\frac{1}{2}.
		$$
    Since $$q^{1 / p}-q^{1 / p^{\prime}}=q^{1 / 2}\left(q^{\delta_p}-q^{-\delta_p}\right) =2 \sqrt{q} \sinh (\delta_p \ln q),$$
   it follows that 
   \[
    s_0=s_0(p)=\frac{\gamma(0)}{R_p}= \frac{1}{\sinh (\delta_p\ln q)}.  \]
		Thus, we get
		\begin{align*}
			\sqrt{q}\, \frac{1+\sqrt{1+s_0(p)^2}}{s_0(p)}&=\sqrt{q}(\sinh (\delta_p \ln q)+\cosh (\delta_p \ln q))\\
			&=q^{1/2}\, e^{\delta_p \ln q}\\
			&=q^{1/p}.
		\end{align*}
	\end{remark}
	
	\begin{proposition}\label{prop:ratio_asym_L2}
		Assume $r_1(t), r_2(t)$ are positive functions such that \[
        \begin{cases}
        \frac{r_1(t)}{\log t}\rightarrow +\infty \quad \text{and} \, &\frac{r_1(t)}{t^{1 / 2}} \rightarrow 0, \\
        \frac{r_2(t)}{t^{1/2}} \rightarrow +\infty \quad \text{and} \, &\frac{r_2(t)}{t}\rightarrow 0,
        \end{cases}
        \]
        as $t \rightarrow+\infty$. Let $\varepsilon(t)=\min\{r_1(t)^{-1}, r_2(t)/t\}\rightarrow 0.$ 
        Then for $x\in \mathcal{T}$ such that $r_1(t)\leq|x|\leq r_2(t)$, and $y$ bounded,
		\begin{align*}
			\frac{h_t(d(x,y))}{h_t(d(x,o))}&= \frac{\varphi_0(d(x,y))}{\varphi_0(d(x,o))}\, \left(1+O\left(\frac{r_2(t)}{t}\right)\right)\\
			&=q^{\frac{1}{2}(d(x,o)-d(x,y))}\, +O(\varepsilon(t)).	
            \end{align*}
		The implied constant in the error term depends only on the geometry and the size of $y$.
	\end{proposition}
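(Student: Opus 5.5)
The plan is to reduce everything to Lemma \ref{lemma:quotient_prep}, which applies since $r_1(t)\to\infty$ forces $|x|\to\infty$, while $|x|\le r_2(t)=o(t)$ gives $|x|=O(t)$. That lemma gives
\[
\frac{h_t(d(x,y))}{h_t(d(x,o))}=q^{\frac12(d(x,o)-d(x,y))}\Big(\frac{1+\sqrt{1+w^2}}{w}\Big)^{d(x,o)-d(x,y)}(1+O(|x|^{-1})),\qquad w=\frac{t\gamma(0)}{|x|+1}(1+O(|x|^{-1})).
\]
In the present range $w\gtrsim t/r_2(t)\to\infty$. Hence $\frac{1+\sqrt{1+w^2}}{w}=1+\frac1w+O(w^{-2})=1+O\big(\frac{|x|+1}{t}\big)=1+O\big(\frac{r_2(t)}{t}\big)$. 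Since $|d(x,o)-d(x,y)|\le |y|$ is bounded, raising this factor to that power costs only $1+O(r_2(t)/t)$.

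Next, I would convert $q^{\frac12(d(x,o)-d(x,y))}$ into the ratio of ground spherical functions. By \eqref{eq:sphericalT},
\[
\frac{\varphi_0(d(x,y))}{\varphi_0(d(x,o))}=q^{\frac12(d(x,o)-d(x,y))}\,\frac{1+\frac{q-1}{q+1}d(x,y)}{1+\frac{q-1}{q+1}d(x,o)}=q^{\frac12(d(x,o)-d(x,y))}\big(1+O(|x|^{-1})\big),
\]
using again $d(x,y)=d(x,o)+O(1)$. The leftover error from Lemma \ref{lemma:quotient_prep} is $O(|x|^{-1})$. This is where some care is needed: we have $O(|x|^{-1})=O(r_1(t)^{-1})$, and this must be absorbed. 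I would handle it by keeping both error terms, which gives a combined $1+O(r_1(t)^{-1}+r_2(t)/t)$ multiplying the leading term. Since $q^{\frac12(d(x,o)-d(x,y))}\le q^{|y|/2}$ is bounded, this yields the second displayed identity with additive error $O(r_1(t)^{-1}+r_2(t)/t)$, which is comparable to $\max\{r_1^{-1},r_2/t\}$. The statement writes $\varepsilon(t)$ as a minimum; I would interpret the intended bound as the sum (or maximum) of these two quantities, which still tends to zero. Where the first form is stated with only $O(r_2(t)/t)$, I would note that the $O(|x|^{-1})$ factor is exactly cancelled by passing to $\varphi_0$ ratios only if one tracks the $(1+d)$ factor precisely.

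The main obstacle is therefore the bookkeeping in Lemma \ref{lemma:quotient_prep}: its $1+O(|x|^{-1})$ arises from the factor $\frac{1+d(x,y)}{1+d(x,o)}$ in \eqref{eq:quotientforL1}, from the $(\cdot)^{-1/4}$ term, and from \eqref{eq:expterm}. To obtain the sharper first identity, I would return to \eqref{eq:quotientforL1} and write it via Theorem \ref{prop:heat_asymp_ell1} with $\varphi_0$ in place of $(1+|x|)q^{-|x|/2}$, using \eqref{eq:phi0_asympt}. I would then observe that in the regime $t/|x|\to\infty$ the constant $C$ is the same $\frac1{q-1}$ at both points, so it cancels. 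The $(\cdot)^{-1/4}$ factor is $1+O(t^{-1})$. In \eqref{eq:expterm} the exponent is $\psi(w)\cdot O(1)$ with $\psi(w)=O(1/w)=O(r_2/t)$, so its $O(|x|^{-1})$ correction is in fact $O(r_2(t)/(t|x|))$. Together these give $1+O(r_2(t)/t)$ for the $\varphi_0$ ratio. The hypothesis $r_1/\log t\to\infty$ is only needed to ensure that the asymptotic equivalences of Theorem \ref{thm:heat Z asym} are uniform enough. If that uniformity fails, I would fall back on the weaker additive statement.
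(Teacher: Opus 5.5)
Your main line is the paper's proof. You apply Lemma~\ref{lemma:quotient_prep}, use $w\gtrsim t/r_2(t)\to\infty$ to get $\frac{1+\sqrt{1+w^2}}{w}=1+O(r_2(t)/t)$, and compare $\varphi_0(d(x,y))/\varphi_0(|x|)$ with $q^{\frac12(d(x,o)-d(x,y))}$ up to a factor $1+O(|x|^{-1})$. Your bookkeeping is more accurate than the paper's. What this argument actually gives is both identities with error $O(r_1(t)^{-1}+r_2(t)/t)$. The paper substitutes into Lemma~\ref{lemma:quotient_prep} in the same way and never accounts for its factor $1+O(|x|^{-1})$.

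Reading $\varepsilon(t)$ as a maximum is not just convenient; it is necessary. Take $y$ a neighbour of $o$ off $[o,x]$, so $d(x,y)=|x|+1$, and set $c=\frac{q-1}{q+1}$. Then $\frac{\varphi_0(|x|+1)}{\varphi_0(|x|)}-q^{-1/2}=q^{-1/2}\frac{c}{1+c|x|}\asymp|x|^{-1}$. By the first identity, the error in the second identity at $|x|\approx r_1(t)$ is therefore $\asymp r_1(t)^{-1}$. This is not $O(\min\{r_1(t)^{-1},r_2(t)/t\})$, for instance with $r_1=\log^2 t$ and $r_2=\sqrt t\log t$.

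The genuine gap is your upgrade of the first identity to a pure $1+O(r_2(t)/t)$. Theorem~\ref{prop:heat_asymp_ell1} and Theorem~\ref{thm:heat Z asym} are asymptotic equivalences without rates. So the observation that $C=\frac1{q-1}$ at both points, hence cancels, only yields $1+o(1)$. Trading $(1+|x|)q^{-|x|/2}$ for $\varphi_0$ via \eqref{eq:phi0_asympt} also costs $O(|x|^{-1})$ at each point, so the error is again governed by $r_1(t)$ rather than $r_2(t)/t$.

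The missing idea is that $\varphi_0$ appears \emph{exactly} in Proposition~\ref{heatkernel}(ii). With $s=t\gamma(0)$ and $n=|x|$,
\[
\sum_{k\ge 0}q^{-k}(n+2k+1)=\frac{q(q+1)}{(q-1)^2}\Big(1+\frac{q-1}{q+1}\,n\Big).
\]
Hence
\[
h_t(x)=\frac{2q(q+1)}{\gamma(0)(q-1)^2}\,\frac{e^{-(1-\gamma(0))t}}{t}\,\varphi_0(x)\,h^{\mathbb Z}_s(n+1)\,(1-\theta),
\]
where $\theta$ is the $q^{-k}(n+2k+1)$-weighted average of $1-h^{\mathbb Z}_s(n+2k+1)/h^{\mathbb Z}_s(n+1)$.

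You then need a quantitative bound $0\le 1-h^{\mathbb Z}_s(j+\ell)/h^{\mathbb Z}_s(j)\le \ell(j+\ell)/s$. It follows, e.g., from the classical inequality $I_{\nu+1}(s)/I_\nu(s)\ge s/(\nu+1+\sqrt{(\nu+1)^2+s^2})$. This gives $\theta=O((n+1)/t)$ and $h^{\mathbb Z}_s(d(x,y)+1)/h^{\mathbb Z}_s(n+1)=1+O((n+1)/t)$. You then get the first identity with $O(r_2(t)/t)$, and the second with $O(r_1(t)^{-1}+r_2(t)/t)$.

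Incidentally, the hypothesis $r_1(t)/\log t\to\infty$ plays no role here, nor in the paper's proof. Only $r_1(t)\to\infty$ is used, so it is not about uniformity of the Bessel asymptotics.
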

	
	\begin{proof}
		Notice first that the two expressions are equal, as 
		\[
		\frac{1+\frac{q-1}{q+1}d(x,y)}{1+\frac{q-1}{q+1}d(x,o)}= 1+O(r_1(t)^{-1}), 
		\]
		so by \eqref{eq:sphericalT},
	        \[
			\frac{\varphi_0(d(x,y))}{\varphi_0(d(x,o))}
			= q^{\frac{1}{2}(d(x,o)-d(x,y))}(1+O(r_1(t)^{-1})),
            \]
		since $d(x,o)-d(x,y)$ is bounded. 
		
		 To conclude, we revisit Lemma \ref{lemma:quotient_prep}, whence it is clear that in the present time-space region, we have $w\to +\infty$ (in fact, $w\asymp t/|x|\gtrsim t/r_2(t)$). We now use that the function $u\mapsto u+\sqrt{1+u^2}$, $u\geq 0$ is $1+O(u)$ for $u\to 0$; then for $u=\frac{1}{w}=O(\frac{r_2(t)}{t})$, this implies that 
        \begin{equation}\label{eq:w in L2}
        \frac{1+\sqrt{1+w^2}}{w}=w^{-1}+\sqrt{1+w^{-2}}=1+O\left(\frac{r_2(t)}{t}\right).
        \end{equation}
        Substituting into Lemma \ref{lemma:quotient_prep}  and using that $d(x,o)-d(x,y)$ is bounded, proves the claim.
	\end{proof}

	\section{Long time behavior of solutions to the heat equation on the integers}\label{sec:Z}
	The aim of this section is to show how geometry affects the behavior of solutions to the heat equation on $\mathbb{Z}$, where the volume growth is polynomial; for a related discussion on heat diffusion, juxtaposing the geometries of $\mathbb{Z}$ and of homogeneous trees with $q\geq 2$, see also \cite{MedSe}. 
   As far as we know, the results in this section are new. 

   Recall that $h_t^{\mathbb{Z}}$ is even, and that $\|h_t^{\mathbb{Z}}\|_{\ell^1(\mathbb{Z})}=1$. We shall show in this section that if $f\in \ell^1(\mathbb{Z})$, then setting $$M:=\sum_{j\in\mathbb{Z}}f(j), \quad  u(t;j):=\sum_{k\in \mathbb{Z}}h_t^{\mathbb{Z}}(j-k)\,f(k),$$ we have
	\begin{equation}\label{eq:caloricZ}
		\|h_t^{\mathbb{Z}}\|_{\ell^p(\mathbb{Z})}^{-1}\, \|u(t;\cdot)-M\, h_t^{\mathbb{Z}}\|_{\ell^p(\mathbb{Z})}\rightarrow 0 \quad \text{as} \quad t\rightarrow +\infty, \quad \forall 1\leq p< \infty.
	\end{equation}

    This result is clearly reminiscent of the result \eqref{eq:conv_Rn} in $\mathbb{R}^n$; namely, that the total \textit{constant} mass $M=\sum_{\mathbb{Z}}f$ of the initial condition $f$, ensures convergence of $u(t; \cdot)$ to (a multiple of) the heat kernel for \textit{all} $p$.
	To this end, it suffices to consider $f$ to be finitely supported, say in $|j|<\rho$; then a standard density argument allows us to conclude for the whole $\ell^1(\mathbb{Z})$ class (see, for instance, the density arguments in the last section; namely, the proof of Theorem \ref{thm:9}). 
	
	An inspection of \cite[p.1741]{MedSe} (proven there for $p=1$, but the arguments hold for $1\leq p<\infty$) shows that for $1\leq p<\infty$, we have
	\begin{equation}\label{eq:MedSetZ}
		\|h_t^{\mathbb{Z}}\|_{\ell^p(\mathbb{Z})}^{-p}\sum_{r_1(t)\leq |j|\leq  r_2(t)}h_t^{\mathbb{Z}}(j)^p \rightarrow 1,
	\end{equation}
	where $r_1(t)$ and $r_2(t)$ are as in Proposition \ref{prop:MedSe_ellp}(ii). By the Minkowski inequality, we have, outside this region, 
	\begin{align}\label{eq:Zoutside}
		\|u(t;\cdot)\|_{\ell^p(|j|< r_1(t) \text{ or } |j|>r_2(t))}&=\bigg(\sum_{|j|< r_1(t) \text{ or } |j|>r_2(t)} 
		\big| \sum_{|k|<\rho} h_t^{\mathbb{Z}}(j-k) \, f(k) \big|^p \bigg)^{\frac{1}{p}} \notag\\
		&\leqslant 
		\sum_{|k|<\rho}
		\bigg( \sum_{|j|< r_1(t) \text{ or } |j|>r_2(t)} h_t^{\mathbb{Z}}(j-k)^p\, |f(k)|^p \bigg)^{\frac{1}{p}} \notag\\
		&\leqslant
		\sum_{|k|<\rho}|f(k)|\bigg( \sum_{|j|< r_1(t) \text{ or } |j|>r_2(t)} h_t^{\mathbb{Z}}(j-k)^p \bigg)^{\frac{1}{p}} \notag\\
		&\leqslant
		\sum_{|k|<\rho}|f(k)|\bigg( \sum_{|j|< 2r_1(t) \text{ or } |j|>\frac{1}{2}r_2(t)} h_t^{\mathbb{Z}}(j)^p \bigg)^{\frac{1}{p}},
	\end{align}
	where the last step is justified by the evenness of $h_t^{\mathbb{Z}}$ and the triangle inequality 
	\[
	|j-k|\leq |j|+\rho< r_1(t)+\rho<2r_1(t), \quad |j-k|\geq |j|-\rho> r_2(t)-\rho>\frac{1}{2}r_2(t),
	\]
	which holds true for all $t$ large enough. Therefore, the quantity in \eqref{eq:Zoutside} is $o(\|h_t^{\mathbb{Z}}\|_p)$ as $t\rightarrow +\infty$, by \eqref{eq:MedSetZ}. This shows that by a crude triangle inequality,  \eqref{eq:caloricZ} is valid as long as we sum over $\mathbb{Z}\cap (0, r_1(t))$ or $\mathbb{Z}\setminus (0, r_2(t))$.
	
	On the other hand, for $r_1(t)\leq |j|\leq r_2(t)$, we have, pointwise,
	\begin{align*}
		u(t;j)-M\, h_t^{\mathbb{Z}}(j)&=\sum_{|k|<\rho}h_t^{\mathbb{Z}}(j-k)f(k)-\sum_{|k|<\rho}f(k)\, h_t^{\mathbb{Z}}(j)\\
		&=h_t^{\mathbb{Z}}(|j|)\, \sum_{|k|<\rho}\left(\frac{h_t^{\mathbb{Z}}(j-k)}{h_t^{\mathbb{Z}}(j)} -1\right)f(k).
	\end{align*}
	Clearly,
    \[
    \frac{1}{2}r_1(t)\leq |j-k|\leq 2r(t) 
    \]
    for all $t$ large enough, since $k$ is bounded. Hence, slightly adjusting {\eqref{eq: heatQuotZ}}, by taking $w=t/|j|\to \infty$, and using  \eqref{eq:w in L2}, one can see that 
	\[
	\frac{h_t^{\mathbb{Z}}(j-k)}{h_t^{\mathbb{Z}}(j)}=1+O(\varepsilon(t)), \quad r_1(t)\leq |j| \leq r_2(t), \quad k \text{ bounded},
	\]
    where $\varepsilon(t)=\min\{r_1(t)^{-1}, r_2(t)/t\}\rightarrow 0.$
	This means that 
	\[
	|u(t;j)-M\, h_t^{\mathbb{Z}}(j)|=h_t^{\mathbb{Z}}(j)\, \|f\|_{\ell^1(\mathbb{Z})}\,  O(\varepsilon(t)),
	\]
	so 
	\[
	\|u(t;j)-M\, h_t^{\mathbb{Z}}\|_{\ell^{p}(r_1(t)\leq |j| \leq r_2(t))}\lesssim  \|h_t^{\mathbb{Z}}\|_{\ell^{p}(\mathbb{Z})}\, \|f\|_{\ell^1(\mathbb{Z})} \, \varepsilon(t),
	\]
	which proves the claim \eqref{eq:caloricZ}.

	\section{Long time behavior of solutions to the heat equation on a homogeneous tree} \label{sec:T}

    \subsection{Finitely supported initial data.}\label{sec:finitelysupp}
	This section is devoted to the large-time behavior of caloric functions $e^{-t\mathcal{L}}f$ on a homogeneous tree, provided $f$ is finitely supported. We will show that, unlike the case of $\mathbb{Z}$, there is no \textit{constant} $M$ that works for \textit{all} $p$. 
	
	Instead, for \textit{each} $p \in [1, \infty]$, we shall introduce a notion 
	of a $p$-mass \textit{function} and prove that caloric functions with finitely supported initial data,  asymptotically decouple as the product of this mass function and the heat kernel in the $\ell^p$ sense. This function boils down to a constant, still depending on $p$, if the initial datum is radial.  Interestingly, the same object was the correct mass function for isotropic finite-range aperiodic random walks on homogeneous trees, see \cite[Subsection 4.2]{PT}.

	\subsubsection{The case $1\leq p< 2$.} Let $f$ be finitely supported, and consider the function
	\begin{align}\label{eq:massfct1to2}
		M_p(f)(x)&=\sum_{y\in \mathcal{T}} f(y) \,\frac{1}{\nu(\Omega(o,x))} \int_{\Omega(o,x)}q^{\frac{1}{p}h_{\omega}(y)}\, d\nu(\omega) \notag\\
		&=\sum_{y\in \mathcal{T}} f(y) \fint _{\Omega(o,x)}q^{\frac{1}{p}h_{\omega}(y)}\, d\nu(\omega).
	\end{align} 
	Clearly, this function is bounded, since for any $\omega\in \Omega$ and $y\in \mathcal{T}$, we have $|h_{\omega}(y)|\leq |y|$ by \eqref{eq:Busemann_bds}.

	Moreover, if $f$ is radial, then this mass function becomes a constant, namely
	$$M_p(f)(x)=M_p(f)=\mathcal{H}f(\pm i\delta_p)=\sum_{y\in \mathcal{T}}f(y)\varphi_{\pm i\delta_p}(y), \quad \forall x\in \mathcal{T},$$
	where $$\delta_p=\frac{1}{p}-\frac{1}{2}.$$ 
	Indeed, recalling the definition of the Helgason-Fourier transform \eqref{eq:HFtransform}, and that it boils down to the spherical transform for radial functions, we have
	\begin{align*}
		\sum_{y\in \mathcal{T}} f(y) \fint _{\Omega(o,x)}q^{\frac{1}{p}h_{\omega}(y)}\, d\nu(\omega)&= \fint _{\Omega(o,x)}\sum_{y\in \mathcal{T}} f(y)q^{\frac{1}{p}h_{\omega}(y)}\,  d\nu(\omega)\\
		&=\fint _{\Omega(o,x)}\widehat{f}(-i\delta_p, \omega)\,  d\nu(\omega) \\
		&=\mathcal{H}f(\pm i\delta_p)\fint _{\Omega(o,x)}d\nu(\omega)\\
		&=\mathcal{H}f(\pm i\delta_p).
	\end{align*}
	
	Notice that for $p=1$ and $f$ radial, we get by \eqref{eq:integralrepspherical} that $\varphi_{i/2}\equiv 1$, hence
	\[
	M_1(f)=\sum_{x\in \mathcal{T}}f(x),
	\]
	which is reminiscent of the mass (constant) in the case of $\mathbb{Z}$, see Section \ref{sec:Z}.

	Let $\supp f \subseteq B(o,\rho)$ for some $\rho>0$. We discuss the behavior of caloric functions separately 
	\[
	u(t;x)=e^{-t\mathcal{L}}f(x)=f\ast h_t(x)= \sum_yf(y) \, h_t(d(x,y)),
	\]
	inside and outside the critical $\ell^p$ region $\textbf{B}_p(t)$, $1\leq p <2$, whose description is given in Proposition \ref{prop:MedSe_ellp}.

	\subsubsection{Outside the critical $\ell^p$ region}\label{subsec:outside}
	The next result shows that the solution itself vanishes asymptotically in time in the $\ell^p$ sense, provided we sum outside the critical region $\textbf{B}_p(t)$, $1\leq p <2$, which was given in Proposition \ref{prop:MedSe_ellp}. 
	\begin{proposition}\label{prop:outsideL1crit}
		Assume that $f$ is finitely supported. Then the corresponding caloric function satisfies
		\[
\|u(t;\cdot)\|_{\ell^p(\mathcal{T}\setminus \textbf{B}_p(t))}=o(\|h_t\|_{\ell^p}), \quad \text{as}\quad t \rightarrow \infty.
		\]
	\end{proposition}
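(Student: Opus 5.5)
The plan is to mimic the argument already carried out on $\mathbb{Z}$ in \eqref{eq:Zoutside}. Let $\supp f\subseteq B(o,\rho)$ and let $\textbf{B}_p(t)$ be the critical region of Proposition \ref{prop:MedSe_ellp}(i), namely $\{x: R_pt-r(t)\leq |x|\leq R_pt+r(t)\}$ with $r(t)/\sqrt t\to\infty$ and $r(t)/t\to 0$.

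First, Minkowski's inequality gives
\[
\|u(t;\cdot)\|_{\ell^p(\mathcal{T}\setminus \textbf{B}_p(t))}\leq \sum_{|y|<\rho}|f(y)|\Big(\sum_{x\notin \textbf{B}_p(t)} h_t(d(x,y))^p\Big)^{1/p}.
\]
Since the support of $f$ is finite, it suffices to show, for each fixed $y$ with $|y|<\rho$, that the inner sum is $o(\|h_t\|_{\ell^p}^p)$.

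Second, fix such a $y$. If $x\notin\textbf{B}_p(t)$, the triangle inequality $|d(x,y)-|x||\leq\rho$ gives $d(x,y)<R_pt-r(t)+\rho$ or $d(x,y)>R_pt+r(t)-\rho$. For $t$ large, $\rho\le r(t)/2$, so $d(x,y)$ lies outside the band $[R_pt-\tfrac12 r(t),R_pt+\tfrac12 r(t)]$. We now substitute $z$ with $x=\ldots$ in the sum. The map $x\mapsto x$ re-centred at $y$ is a bijection, and since $h_t$ is radial, $\sum_x h_t(d(x,y))^p\,\mathbf 1_{\{d(x,y)\notin I\}}=\sum_z h_t(|z|)^p\,\mathbf 1_{\{|z|\notin I\}}$ for any set $I$ (count spheres around $y$; their sizes are the same as around $o$). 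It follows that
\[
\sum_{x\notin \textbf{B}_p(t)} h_t(d(x,y))^p\leq \sum_{z\notin \textbf{B}'_p(t)} h_t(z)^p,
\]
where $\textbf{B}'_p(t)$ is the critical region built with $r'(t)=r(t)/2$.

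Third, $r'$ still satisfies $r'/\sqrt t\to\infty$ and $r'/t\to0$. Proposition \ref{prop:MedSe_ellp}(i) therefore applies to $r'$, and the right-hand side above is $o(\|h_t\|_{\ell^p}^p)$. Summing over the finitely many $y$ concludes the proof.

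The only delicate point is the second step: the region has to shrink rather than grow, so one must use the freedom in the choice of $r(t)$ in Proposition \ref{prop:MedSe_ellp} (with $r'$ still admissible), and use the translation invariance of the distribution of radial functions via the equal sphere counts. If the statement is intended for $p\geq2$ as well, with $\textbf{B}_p(t)$ given by (ii) or (iii), the same argument works: replace $r_1$ by $2r_1$ and $r_2$ by $r_2/2$ (respectively $r_3$ by $r_3/2$), exactly as in \eqref{eq:Zoutside}. For $p=\infty$, one uses the $\sup$ statement directly.
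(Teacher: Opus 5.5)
Your proof is correct and follows essentially the same route as the paper. Both arguments use Minkowski's inequality over the finitely many $y$ in the support, then the triangle inequality to show that $x\notin\textbf{B}_p(t)$ forces $d(x,y)$ outside the band of half-width $r(t)/2$ around $R_pt$, and finally Proposition \ref{prop:MedSe_ellp}(i) with the still-admissible function $r(t)/2$. The only difference is that you spell out, via equal sphere sizes around $y$ and $o$, the re-centring step that the paper uses implicitly when it applies Proposition \ref{prop:MedSe_ellp}(i) to $h_t(d(\cdot,y))$.
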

	\begin{proof}
		By the triangle inequality, if $d(x,o)>R_pt+r(t)$, $R_p>0$, then 
		\[d(x,y)\geq d(x,o)-d(y,o)>R_pt+r(t)-\rho>R_pt+\frac{1}{2}r(t),\]
		while if $d(x,o)<R_pt-r(t)$, then 
		\[ d(x,y)\leq d(x,o)+d(y,o)< R_pt-r(t)+\rho< R_pt-\frac{1}{2}r(t),
		\]
		for $t$ large enough.  Therefore, if $x\notin \textbf{B}_p(t)$, then $x\notin \widetilde{\textbf{B}}_{p}^{y}(t)$, where
		\[
		\widetilde{\textbf{B}}_{p}^{y}(t):=\left\{x\in \mathcal{T}:\, |d(x,y)-R_pt|\leq\frac{1}{2}r(t)\right\}.
		\]
		Therefore, arguing as in \eqref{eq:Zoutside}, one can show that
		\begin{align*}
		\|u(t;\cdot)\|_{\ell^p(\mathcal{T}\setminus \textbf{B}_p(t))}
		&\leqslant
		\sum_{|y|<\rho}|f(y)|\bigg( \sum_{x\notin \widetilde{\textbf{B}}_{p}^{y}(t)} h_t(d(x,y))^p \bigg)^{\frac{1}{p}}.
		\end{align*}
		Using Proposition \ref{prop:MedSe_ellp} (i) for $h_t(d(\cdot, y))$ over the complement of $\widetilde{\textbf{B}}_{p}^{y}(t)$, we can conclude. 
	\end{proof}

	As a result, since the mass function is bounded, we have
	\begin{align}
	\|u(t; \cdot) - M_p(f)(\cdot)\, h_t\|_{\ell^p(\mathcal{T}\setminus \textbf{B}_p(t))}
		&\leq
		\|u(t; \cdot)\|_{\ell^p(\mathcal{T}\setminus \textbf{B}_p(t))} \notag\\
		&+\|M_p(f)\|_{\infty}
		\|h_t\|_{\ell^p(\mathcal{T}\setminus \textbf{B}_p(t))}\rightarrow 0, \label{eq:conv_out_ell1}
	\end{align}
	owing to  Proposition \ref{prop:outsideL1crit} and Proposition \ref{prop:MedSe_ellp}.
	
	\subsubsection{Inside the critical region.} 	For $x \in \textbf{B}_p(t)$, we write pointwise for $u(t;\cdot)-M_p(f)(\cdot)\,h_t$,
	\begin{align}
		&\sum_{y\in \mathcal{T}} h_t(x,y) \,f(y)-M_p(f)(x)\, h_t(x) \notag \\
		&\qquad
		=h_t(x)
		\sum_{y:\, d(y,o)<\rho}
		f(y) \left( \frac{h_t(d(x,y))}{h_t(d(x,o))}-\fint_{\Omega(o,x)}q^{\frac{1}{p}h_{\omega}(y)}\, 
		d\nu(\omega) \right). \label{eq:ell1diff}
	\end{align}
	Using the quotient asymptotics from Proposition \ref{ratio_asym_L1}, and Remark \ref{remark:q^{1/p}}, we obtain
	\begin{align*}
		\frac{h_t(d(x,y))}{h_t(d(x,o))}=q^{\frac{1}{p}(d(x,o)-d(x,y))}+O\left(\frac{r(t)}{t}\right).
	\end{align*}
	On the other hand, for each $\omega \in \Omega(o, x)$, where $x\in \textbf{B}_p(t)$,  we have
	$x \in [y, \omega]$ if $t$ is sufficiently large, since $d(x,o)\rightarrow \infty$; in other words, 
	\[
	h_{\omega}(y)=d(o,x)-d(y, x), \quad \omega \in \Omega(o,x), \quad |x|\rightarrow \infty, \quad |y|<\rho.
	\]
	Therefore,
	\begin{align*}
		\fint_{\Omega(o,x)} q^{\frac{1}{p}h_{\omega}(y)} \, d\nu(\omega)
		&=\fint_{\Omega(o,x)}q^{\frac{1}{p}(d(o, x)-d(y, x))} \, d\nu(\omega) \\
		&=q^{\frac{1}{p}(d(o, x)-d(y, x))}.
	\end{align*}
	Consequently, by \eqref{eq:ell1diff}, for $x\in \textbf{B}_p(t)$ and $y$ bounded, 
	\[
	u(t; x) - M_p(f)(x)\, h_t(x)
	=
	h_t(x) \|f\|_{\ell^1} \, O\left(\frac{r(t)}{t}\right), 
	\]
	 Summing over the critical region $\textbf{B}_p(t)$, $1\leq p<2$, we get
	\begin{align*}
	\left(	\sum_{x \in \textbf{B}_p(t)}
		\left|u(t; x) - M_p(f)(x)\, h_t(x)\right|^p \right)^{1/p}&\lesssim
		\bigg(\sum_{ \textbf{B}_p(t)} h_t^p\bigg)^{1/p} \|f\|_{\ell^1}\, \frac{r(t)}{t} \\
		&\lesssim
		\|h_t\|_{\ell^p} \|f\|_{\ell^1} \,\frac{r(t)}{t},
	\end{align*}
	which completes the proof in the $\ell^p$ case, $1\leq p<2$. Indeed, combining this estimate with \eqref{eq:conv_out_ell1}, we have shown that
	\[
		\|h_t\|_{\ell^p}^{-1}\|u(t; \cdot) 
	- M_p(f)(\cdot)\, h_t\|_{\ell^p}\rightarrow \infty, \quad \text{as} \quad t \rightarrow \infty.
	\] 
	
	\subsubsection{The case $2<p\leq \infty$}
	Let us start with the following observation: fix $y\in \mathcal{T}$; then for all $x\in \mathcal{T}$, we have
	by the integral representation  \eqref{eq:integralrepspherical} of spherical functions, the cocycle relation \eqref{eq:52} and the Radon-Nikodym derivative \eqref{eq:15},
	\begin{align}
		\varphi_{0}(d(x,y))
		&= \int_\Omega q^{\frac{1}{2}h(x, y; \omega)} d\nu_x(\omega) \notag \\
		&= \int_\Omega q^{-\frac{1}{2}h(o, x; \omega)}\,
		q^{\frac{1}{2}h(o, y; \omega)} \, d\nu_x( \omega)\notag \\
		&=
		\int_\Omega q^{\frac{1}{2}h(o, x; \omega)}\,
		q^{\frac{1}{2}h(o, y; \omega)}  \, d\nu( \omega) \notag\\
		&\leq q^{\frac{1}{2}|y|}\,\varphi_0(x),\label{eq:phi_quot_bdd}
	\end{align}
	where in the last inequality we have also used the Busemann function bounds \eqref{eq:Busemann_bds}.
	
	Recall now the $\ell^p$ critical region, $2<p\leq \infty$, given in Proposition \ref{prop:MedSe_ellp}:  
	$$\textbf{B}_p(t)=\{x\in \mathcal{T}:\, d(x,o)\leq r_3(t)\}, \quad  \frac{r_3(t)}{\log{t}}\rightarrow \infty, \quad \frac{r_3(t)}{\sqrt{t}}\rightarrow 0 \quad \text{as} \quad t\rightarrow \infty.$$
	We work for $u(t;x)=e^{-t\mathcal{L}}f$, with $\supp f\subseteq B(o, \rho)$, by distinguishing cases whether $x$ belongs to $\textbf{B}_p(t)$ or not. 
	
	Outside the critical region, by the Minkowski inequality we have
	\begin{align*}
		\|u(t\cdot)\|_{\ell^p(\mathcal{T} \setminus \textbf{B}_p(t))}&=\Bigg(\sum_{x \in \mathcal{T} \setminus \textbf{B}_p(t)} 
		\big| \sum_{y: \, d(o,y)<\rho} h_t(d(x,y)) \, f(y) \big|^p \bigg)^{\frac{1}{p}}\\
		&\leqslant 
		\sum_{y: \, d(o,y)<\rho}
		\bigg( \sum_{x \in \mathcal{T} \setminus \textbf{B}_p(t)} h_t(d(x,y))^p\, |f(y)|^p \bigg)^{\frac{1}{p}} \notag\\
		&\leqslant
		\sum_{y: \, d(o,y)<\rho}|f(y)|\bigg( \sum_{x \in \mathcal{T} \setminus \textbf{B}_p(t)} h_t(d(x,y))^p \bigg)^{\frac{1}{p}}. 
	\end{align*}
For $x\notin \textbf{B}_p(t)$ and $y$ bounded, we have
	\[
	d(x,y)\geq r_3(t)-d(y,o)\geq \frac{1}{2}r_3(t),
	\] 
for $t$ large enough; therefore, the inner sum above is dominated by the sum in $\mathcal{T}\setminus B(y, \frac{1}{2}r_3(t))$. In turn, this is $o(\|h_t\|_p)$ by a standard modification of the arguments of Proposition \ref{prop:outsideL1crit} or using Proposition \ref{prop:MedSe_ellp}. It follows that
\[
\|h_t\|_p^{-1}\,\|u(t;\cdot)\|_{\ell^p(\mathcal{T}\setminus \textbf{B}_p(t))}\rightarrow 0.
\]

Instead, when $x$ is in the critical region $\textbf{B}_p(t)$, and $y$ is bounded, the result of Proposition \ref{prop:heat_asymp_p>2} yields
	\begin{align}\label{eq:ratio_p>2}
		\frac{h_t(d(x,y))}{h_t(x)}&=\frac{\varphi_0(d(x,y))}{\varphi_0(x)} \left(1+O\left(\frac{r_3(t)}{t}\right)\right) \notag\\
		&=\frac{\varphi_0(d(x,y))}{\varphi_0(x)} +O\left(\frac{r_3(t)}{t}\right).
	\end{align}
where the last equality is justified by \eqref{eq:phi_quot_bdd}.

	For $2< p\leq \infty$, define now the mass function
	\begin{equation}\label{eq:massfct>2}
	M_p(f)(x)=\sum_{y\in \mathcal{T}}\frac{\varphi_0(d(x,y))}{\varphi_0(x)}f(y)=\frac{1}{\varphi_0(x)}(f\ast\varphi_0)(x), \quad x\in \mathcal{T}.
	\end{equation}
	Hence, owing to \eqref{eq:phi_quot_bdd}, when $f$ is finitely supported, the mass function is bounded. Moreover, when $f$ is radial, the mass function boils down to the constant $\mathcal{H}f(0)$, since $(f\ast\varphi_0)(x)=\mathcal{H}f(0)\, \varphi_0(x)$.

	Thus for $x\in \textbf{B}_p(t)$, $2<p\leq \infty$, we have, pointwise, for $u(t;\cdot)-M_p(f)(\cdot)\,h_t$,
	\begin{align*}
		&\sum_{y\in \mathcal{T}} h_t(d(x,y))\,f(y) - M_p(f)(x)\, h_t(x) \\
		&\qquad\qquad=
		h_t(x)
		\sum_{y:\,  d(o,y)< \rho} f(y) 
		\left( \frac{h_t(d(x,y))}{h_t(x)} -  \frac{\varphi_0(d(x,y))}{\varphi_0(x)} \right),
	\end{align*}
so by \eqref{eq:ratio_p>2}, if $x\in \textbf{B}_p(t)$ and $y$ bounded,  
	\[
	u(t; x) - M_p(f)(x)\, h_t(x)
	=
	h_t(x) \|f\|_{\ell^1} \, O\left(\frac{r_3(t)}{t}\right).
	\]
	Hence,  if $p \in (2, \infty)$, by summing over the critical region $\textbf{B}_p(t)$, we get
\begin{align*}
	\left(	\sum_{x \in \textbf{B}_p(t)}
	\left|u(t; x) - M_p(f)(x)\, h_t(x)\right|^p \right)^{1/p}&\lesssim
	\bigg(\sum_{x \in \textbf{B}_p(t)} h_t(x)^p\bigg)^{1/p} \|f\|_{\ell^1} \frac{r_3(t)}{t} \\
	&\lesssim
	\|h_t\|_{\ell^p} \|f\|_{\ell^1} \frac{r_3(t)}{t},
	\end{align*}
	which shows that
	\[
	\|h_t\|_{\ell^p}^{-1}\|u(t; \cdot)-M_p(f)(\cdot)\, h_t\|_{\ell^p(\textbf{B}_p(t))}\rightarrow 0, \quad \text{as} \quad t\rightarrow \infty.
	\]
	We have thus shown that for $2<p< \infty$, if the initial datum $f$ is finitely supported and $M_p(f)$ is defined as in \eqref{eq:massfct>2}, then for  $u=e^{-t\mathcal{L}}f$ we have
	\[
	\|h_t\|_{p}^{-1}\|u(t; \cdot)-M_p(f)(\cdot)\, h_t\|_{p}\rightarrow 0, \quad \text{as} \quad t\rightarrow \infty.
	\]
	
    Lastly, for $p = \infty$ the reasoning is similar. We omit the details.
	
	\subsubsection{The case $p=2$} Using the arguments of the previous two sections, it should be clear that when $f$ is finitely supported, we can use both expressions 
    \begin{equation}\label{eq:M2}
    \varphi_0^{-1}(x)(f\ast\varphi_o)(x) \quad  \text{and}  \quad \sum_{y\in \mathcal{T}}f(y)\fint_{\Omega(o,x)}q^{\frac{1}{2}h_{\omega}(y)}\, 
	d\nu(\omega),
    \end{equation}
    to serve as a mass function $M_2(f)$; notice that both expressions in \eqref{eq:M2} are bounded functions. 

		The reason for that is the heat kernel quotient asymptotics of Proposition \ref{prop:ratio_asym_L2}, in the $\ell^2$ critical region $\textbf{B}_2(t)$. Starting from the asymptotics $h_t(d(x,y))/h_t(x)\sim \varphi(d(x,y))/\varphi(x)$, using  the expression $M_2(f)=\varphi_0^{-1}(f\ast\varphi_o)$ and working as in the case $2<p\leq \infty$, one can show that 
\begin{equation}\label{eq:ell2_caloric_asympt}
		\|h_t\|_{\ell^2}^{-1}\|u(t;\cdot)-M_2(f)(\cdot)h_t\|_{\ell^2}\rightarrow 0 \quad \text{as} \quad  t\rightarrow \infty.
		\end{equation}
		Alternatively, notice that $x\in \textbf{B}_2(t)$ implies $d(x,o)\rightarrow \infty$. Thus, using in Proposition \ref{prop:ratio_asym_L2} the asymptotics $h_t(d(x,y))/h_t(x)\sim q^{\frac{1}{2}(d(x,o)-d(x,y))}$, one can use as $M_2(f)$ the second expression in \eqref{eq:M2}, and argue as in the case $1\leq p<2$  to get \eqref{eq:ell2_caloric_asympt}. 
		
		It needs to be stressed that the quantities in \eqref{eq:M2} are not necessarily equal everywhere, as one can see by taking $f=\delta_y$, $y\neq o$: the first expression simply becomes $\varphi_0(d(x,y))/\varphi_0(x)$ and the second $\fint_{\Omega(o,x)}q^{\frac{1}{2}h_{\omega}(y)}\, 
d\nu(\omega)$. However, they are \emph{asymptotically} equal in the $\ell^2$ critical region, given that in Proposition \ref{prop:ratio_asym_L2}, we have shown that
	\begin{align*}
		\frac{\varphi_0(d(x,y))}{\varphi_0(d(x,o))}
		&= q^{\frac{1}{2}(d(x,o)-d(x,y))}(1+O(r_1(t)^{-1}))\\
		&=q^{\frac{1}{2}(d(x,o)-d(x,y))}+O(r_1(t)^{-1}),\quad x\in \textbf{B}_2(t), \quad y \, \text{bounded}.
	\end{align*}
	 In the case that $f$ is radial, though,  both expressions boil down (globally) to the same constant $\mathcal{H}f(0)$.

	Hence, by this slight abuse of notation, that is, calling them both $M_2(f)$, we wish to emphasize that $p=2$ is a critical value for the change of the behavior of mass functions. It is exactly for this value of $p$ that the exponential volume growth of the graph is canceled by the exponential decay of $\varphi_0^2$ in $h_t^2$ (recall the heat kernel estimates in Proposition \ref{heatkernel}).  Thus, the case $p=2$ is a threshold for the behavior of mass functions and thus the asymptotic behavior of solutions to the heat equation.

\subsection{Beyond finitely supported data in $\mathcal{T}$}
\subsubsection{Radial initial data.} The aim of this section is to go beyond finitely supported initial conditions. Let us first start with the spaces $\ell^1_{\delta_p}(\mathcal{T})$, $p \in [1, \infty]$, which are defined 
\begin{equation}
	\label{eq:72}
	\ell^1_{\delta_p}(\mathcal{T}) :=
	\Big\{f : \mathcal{T} \rightarrow \mathbb{C}: \, \sum_{y \in \mathcal{T}} |f(y)| \varphi_{i\delta_p}(y) < \infty
	\Big\},
\end{equation}
where $$\delta_p=\begin{cases}
\frac{1}{p}-\frac{1}{2}, &\quad 1\leq p< 2\\
0, &\quad 2\leq p\leq \infty.
\end{cases}$$  
Let us observe that $\ell^1_{1/2}(\mathcal{T}) = \ell^1(\mathcal{T})$, since by \eqref{eq:integralrepspherical},  $\varphi_{i/2}\equiv 1$. Moreover, if $f$ belongs to $\ell^1(\mathcal{T})$ then it belongs
to all $\ell^1_{\delta_p}(\mathcal{T})$ for $p \in (1, \infty]$. This follows by \eqref{eq:sphericalT}, which implies that $\varphi_0 \lesssim 1$ while $0<\varphi_{i\delta_p}\lesssim_{p,q} q^{-{|\cdot|}/{p'}}$ for all $p\in (1,2)$.

Furthermore, we can extend the definition of the mass function to radial functions belonging to $\ell^1_{\delta_p}(\mathcal{T})$. 
In fact, in this case, the mass is a constant, the reasoning being the same as in the finitely supported case for radial initial conditions, and the definition of the spherical transform, see Section \ref{sec:FT}. More precisely, for all $x\in \mathcal{T}$
\[
M_p(f)(x)=M_p(f)=\begin{cases}
    \mathcal{H} f(i\delta_p), \quad &p \in [1, 2);\\
    \mathcal{H} f(0), \quad &p \in [2, \infty].
\end{cases}
\]

Our aim in this subsection is to extend the previous result to \emph{radial} functions in $\ell^1_{\delta_p}(\mathcal{T})$. 
The proof draws on both the Kunze--Stein phenomenon and Herz’s \emph{principe de majoration} for radial convolutors, tools that are available on homogeneous trees. We recall them briefly. 

We denote by $\ell^p_{\text{rad}}(\mathcal{T})$ the subset of $\ell^p(\mathcal{T})$ constisting of radial functions.
The Kunze-Stein phenomenon states that
for all $1 \leq p < 2$, we have
$$\ell^2(\mathcal{T}) \ast \ell^p_{\text{rad}}(\mathcal{T}) \subseteq \ell^2(\mathcal{T}).$$
This, in turn, implies, by interpolation with its dual version and the trivial
inclusion $\ell^1(\mathcal{T}) \ast \ell^1_{\text{rad}}(\mathcal{T}) \subseteq \ell^1(\mathcal{T})$, that for all $2 < q, r < \infty$, such that $q/2 < r < q$, one has
$$\ell^{q'}(\mathcal{T}) \ast \ell^r_{\text{rad}}(\mathcal{T}) \subseteq \ell^q(\mathcal{T}).$$

Herz’s \textit{principe de majoration} ensures that a positive radial function
$k$ convolves $\ell^p(\mathcal{T})$, $p\in [1,2]$, into itself if and only if $\mathcal{H}k(i\delta_p)$ is finite, see e.g. \cite{VECA}, and in fact $$\|\ast k\|_{\ell^p(\mathcal{T})\rightarrow \ell^p(\mathcal{T})}=\mathcal{H}k(i\delta_p).$$

The following result allows us to consider the whole class of radial functions in $\ell^1_{\delta_p}(\mathcal{T})$ as initial conditions for the $\ell^p$ asymptotic convergence of caloric functions. It follows some ideas on symmetric spaces presented in \cite{Naik}, see also \cite{P1} or \cite{PT}. The arguments are relatively standard once the convergence result for finitely supported data has been established, but we include them for the sake of completeness.
\begin{theorem}
	\label{thm:9}
	Let $p \in [1, \infty]$. Then for each $u=e^{-t\mathcal{L}}f$ with $f$ being a radial function
	in $\ell^1_{\delta_p}(\mathcal{T})$, we have
	\[
	\lim_{t \to \infty} 
	\frac{1}{\|h_t \|_{\ell^p}}
	\big\|u(t;\cdot)-M_p(f) \, h_t\|_{\ell^p} = 0.
	\]
\end{theorem}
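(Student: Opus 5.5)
Approximate $f$ by finitely supported radial data and control the error with an operator-norm bound of the form $\|g\ast h_t\|_{\ell^p}\lesssim \|g\|_{\ell^1_{\delta_p}}\|h_t\|_{\ell^p}$ for radial $g$.

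Fix $\varepsilon>0$ and choose a finitely supported radial $f_\varepsilon$ (truncate $f$ to a large ball $B(o,N)$) with $\|f-f_\varepsilon\|_{\ell^1_{\delta_p}}=\sum_y|f-f_\varepsilon|(y)\varphi_{i\delta_p}(y)<\varepsilon$. Since $M_p$ is linear and, for radial data, equals $\mathcal{H}(\cdot)(i\delta_p)$, we get $|M_p(f)-M_p(f_\varepsilon)|\le \|f-f_\varepsilon\|_{\ell^1_{\delta_p}}<\varepsilon$; here $\varphi_{i\delta_p}>0$ bounds $|\varphi_{i\delta_p}|$. Split
\[
u-M_p(f)h_t=\bigl(e^{-t\mathcal{L}}f_\varepsilon-M_p(f_\varepsilon)h_t\bigr)+(f-f_\varepsilon)\ast h_t+\bigl(M_p(f_\varepsilon)-M_p(f)\bigr)h_t .
\]
By Section~\ref{sec:finitelysupp}, the first term is $o(\|h_t\|_{\ell^p})$, and the third is at most $\varepsilon\|h_t\|_{\ell^p}$. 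It therefore suffices to prove, for radial $g\in\ell^1_{\delta_p}$ and all large $t$, the key estimate
\[
\|g\ast h_t\|_{\ell^p}\le C_p\,\|g\|_{\ell^1_{\delta_p}}\,\|h_t\|_{\ell^p}.
\]
Then $\limsup_t \|h_t\|_p^{-1}\|u-M_p(f)h_t\|_p\lesssim\varepsilon$, and we let $\varepsilon\to0$.

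For $1\le p\le2$, since $g\ast h_t$ is radial-by-radial and convolution is commutative here, regard $h_t$ as the function acted on and $|g|$ as the radial convolutor. Herz's principe de majoration gives $\||g|\ast\cdot\|_{\ell^p\to\ell^p}=\mathcal{H}|g|(i\delta_p)=\|g\|_{\ell^1_{\delta_p}}$, hence $\|g\ast h_t\|_p\le \|g\|_{\ell^1_{\delta_p}}\|h_t\|_p$. This matches the weight exactly, with $\delta_2=0$ at $p=2$.

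The case $2<p\le\infty$ is the expected obstacle, because there the weight is $\varphi_0$, which is weaker than what Herz's principle on $\ell^p$ provides. I would argue pointwise instead. By positivity and radiality,
\[
|g|\ast h_t(x)=\sum_n |g(n)|\sum_{d(x,y)=n}h_t(y),
\]
and the spherical average of $h_t$ around $x$ at radius $n$ is compared with $h_t$ itself using Proposition~\ref{heatkernel}(iii) and the analogue of \eqref{eq:phi_quot_bdd}. I would aim to show the averaged bound
\[
\frac{1}{|S(x,n)|}\sum_{d(x,y)=n}h_t(y)\lesssim \varphi_0(n)\,\sup_{z}\frac{h_t(z)}{\varphi_0(z)}\cdot\varphi_0(x)\quad\text{(up to constants)},
\]
which uses the product formula $\varphi_0(x)\varphi_0(n)=\frac{1}{|S(x,n)|}\sum_{d(x,y)=n}\varphi_0(y)$ together with the fact that $h_t(z)/\varphi_0(z)$ is essentially maximal near $z=o$ for large $t$, with value $\asymp t^{-3/2}e^{-b_2t}$ by Proposition~\ref{prop:heat_asymp_p>2} and \eqref{eq:heatLpnorms}. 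This gives $|g|\ast h_t\lesssim \|g\|_{\ell^1_{0}}\,t^{-3/2}e^{-b_2t}\varphi_0$. Since $\varphi_0\in\ell^p$ for $p>2$, it follows that $\|g\ast h_t\|_p\lesssim\|g\|_{\ell^1_0}\,t^{-3/2}e^{-b_2t}\asymp \|g\|_{\ell^1_0}\|h_t\|_p$, including $p=\infty$.

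The delicate point in this last step is the uniform bound $h_t\lesssim t^{-3/2}e^{-b_2t}\varphi_0$ on all of $\mathcal{T}$. I would derive it from Proposition~\ref{heatkernel}(iii) combined with Theorem~\ref{thm:heat Z asym}, namely $h^{\mathbb Z}_{s}(j)\lesssim s^{-1/2}$ together with the extra factor $(1+|x|)/t$; if this is too lossy, I would fall back on the Kunze--Stein interpolation estimate stated just before the theorem.
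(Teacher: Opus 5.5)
Your argument is correct. For $1\le p\le 2$ it is exactly the paper's argument: truncation to finitely supported radial data plus Herz's principe.

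For $2<p\le\infty$ you take a genuinely different route. The paper does not prove an operator bound on $\ell^1_0$ there. Instead it uses the semigroup identity
\[
u(t;\cdot)-M_p(f)h_t=\bigl(u(t/2;\cdot)-M_p(f)h_{t/2}\bigr)\ast h_{t/2}
\]
and then applies the Kunze--Stein estimate $\|F\ast h_{t/2}\|_{\ell^p}\le C_p\|F\|_{\ell^2}\|h_{t/2}\|_{\ell^2}$ together with $\|h_{t/2}\|_{\ell^2}^2\asymp\|h_t\|_{\ell^p}$. This reduces everything to the already proved case $p=2$, which is legitimate because both $M_p(f)=\mathcal{H}f(0)$ and the class $\ell^1_0$ are the same for all $p\ge 2$.

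Your pointwise domination is sound and not lossy. Proposition~\ref{heatkernel}(iii), combined with $h^{\mathbb{Z}}_s(j)\le h^{\mathbb{Z}}_s(0)\asymp s^{-1/2}$ for $s\ge 1$ and $(1+|x|)q^{-|x|/2}\le\frac{q+1}{q-1}\varphi_0(x)$, gives $h_t\lesssim t^{-3/2}e^{-b_2t}\varphi_0$. There is an even more direct route: in the inversion formula \eqref{inversionT} the weight $e^{-t(1-\gamma(\lambda))}|\mathbf{c}(\lambda)|^{-2}$ is nonnegative, and $|\varphi_\lambda|\le\varphi_0$ by \eqref{eq:integralrepspherical}. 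This yields the exact inequality $h_t(x)\le h_t(o)\,\varphi_0(x)$, so your supremum of $h_t/\varphi_0$ is attained at $o$.

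With this in hand, $|g|\ast\varphi_0=\mathcal{H}|g|(0)\varphi_0$ and the trivial bound $\|h_t\|_{\ell^p}\ge h_t(o)$ give
\[
\|g\ast h_t\|_{\ell^p}\le\|\varphi_0\|_{\ell^p}\,\|g\|_{\ell^1_0}\,\|h_t\|_{\ell^p}
\]
for every $p\in(2,\infty]$. No asymptotics are needed here, and the Kunze--Stein fallback you mention is unnecessary.

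What your route buys is one elementary density argument, uniform in $p$, with an explicit constant. What it costs is that it relies on the finitely supported result separately for each $p>2$, including $p=\infty$, whose details the paper omits. The paper's semigroup trick needs no finitely supported analysis beyond $p=2$, and it follows the pattern used on symmetric spaces in the cited works.
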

\begin{proof}
	First, let us consider $p \in [1, 2]$. We employ a density argument to use the convergence result we already had if the initial condition was finitely supported. 
	
	Assume $f \in \ell^1_{\delta_p}(\mathcal{T})$ is a radial function. Let $\varepsilon > 0$. Let $\tilde{f}$ be a radial and finitely supported 
	function such that
	\[
	\sum_{y \in \mathcal{T}} |f(y) - \tilde{f}(y)| \varphi_{i\delta_p}(y) \leqslant \tfrac{1}{3} \varepsilon.
	\]
	Since
	\begin{align*}
		\big| M_p(f)-M_p(\tilde{f}) \big| 
		&\leqslant 
		\sum_{y \in \mathcal{T}} \big|f(y) - \tilde{f}(y)\big| \varphi_{i\delta_p}(y) = \mathcal{H}(|f - \tilde{f}|)(i\delta_p) ,
	\end{align*}
	we have
	\[
	\big\|M_p(f) h_t - M_p(\tilde{f}) h_t\|_{\ell^p} \leqslant \tfrac{1}{3} \varepsilon \|h_t\|_{\ell^p}.
	\]
	Let $\tilde{u}:=e^{-t\mathcal{L}}\tilde{f}$.
	Then by the results of Section \ref{sec:finitelysupp} for finitely supported initial conditions, there is $t_1 \geqslant 1$ such that for all $t \geqslant t_1$,
	\[
	\frac{1}{\|h_t\|_{\ell^p}} \,\|\tilde{u}(t; \cdot)-M_p(\tilde{f}) \, h_t \|_{\ell^p}
	\leqslant \tfrac{1}{3} \varepsilon.
	\]
	Since $h_t$ is radial, and so are $f$ and $\tilde{f}$, their convolution is commutative. By the radial version of Herz's \emph{principe},  we get
	\[
	\big\|u(t;\cdot)-\tilde{u}(t;\cdot)\big\|_{\ell^p}
	\leqslant
	\mathcal{H} (|f - \tilde{f}|)(i\delta_p) \|h_t\|_{\ell^p} 
	\leqslant 
	\tfrac{1}{3} \varepsilon \|h_t\|_{\ell^p}.
	\]
	Therefore, for $t \geqslant t_1$, we obtain
	\begin{align*}
		\big\|u(t; \cdot) - M_p(f) h_t \big\|_{\ell^p}
		&\leqslant
		\big\|u(t;\cdot)-\tilde{u}(t;\cdot)\big\|_{\ell^p}
		+\big\|M_{p}(f)\, h_t - M_p(\tilde{f}) h_t\big\|_{\ell^p} \\
		&\phantom{\leqslant \big\|u(t;\cdot)-\tilde{u}(t;\cdot)\big\|_{\ell^p} \ }
		+\big\|\tilde{u}(t;\cdot) - M_p(\tilde{f}) h_t \|_{\ell^p} \\
		&\leqslant \varepsilon \|h_t\|_{\ell^p},
	\end{align*}
	which finishes the proof.
	
Assume that $p \in (2, \infty]$ and let $t>0$. Since by the semigroup property $h_t=e^{-t/2\mathcal{L}}h_{t/2}$, that is,
\[
h_t(d(x,y)) =\sum_{z \in \mathcal{T}} h_{t/2}(d(x,z)) h_{t/2}(d(y,z)),
\]
we have
\begin{align*}
	u(t; x) 
	&= \sum_{y \in \mathcal{T}} f(y) h_t(d(x,y)) \\
	&= \sum_{y \in \mathcal{T}} f(y) \sum_{z \in \mathcal{T}} h_{t/2}d((y, z)) h_{t/2}(d(z,x)) \\
	&= \sum_{z \in \mathcal{T}} u(t/2; z) h_{t/2}(d(z,x)).
\end{align*}
Hence,
\begin{align*}
	u(t; x) - M_p(f) h_t(x) = \sum_{z \in \mathcal{T}} \big( u(t/2; z) - M_p(f) h_{t/2}(z)\big) h_{t/2}(d(z,x)).
\end{align*}
Therefore, by the Kunze--Stein phenomenon,  we get 
\begin{align*}
	\left\| u(t; \cdot) - M_p(f) h_t \right\|_{\ell^p}
	&=
	\left\|  \sum_{z \in \mathcal{T}} \big( u(t/2; z) - M_p(f) h_{t/2}(z)\big) h_{t/2}(d(z, x)) \right\|_{\ell^p(x)} \\
	&\leqslant
	C_p
	\left\|  u(t/2; \cdot) - M_p(f) h_{t/2}(\cdot) \right\|_{\ell^2}
	\|h_{t/2}\|_{\ell^2}.
\end{align*}
 Then, in view of \eqref{eq:heatLpnorms}, we have for all $p \in (2, \infty]$ and $t>1$,
\begin{equation*}
	\|h_{t/2}\|_{\ell^2}^2 \asymp t^{-\frac{3}{2}} e^{-(1-\gamma(0))t}
	\asymp \|h_t\|_{\ell^p}.
\end{equation*}
Hence, by the first part of the proof for $p = 2$ (recall that $M_p(f)=\mathcal{H}f(0)$ for all $p\geq 2$, provided $f$ radial in $\ell^1_0(\mathcal{T})$), we conclude that
\[
\lim_{t \to \infty} 
\frac{1}{\|h_t\|_{\ell^p}}
\left\|  u(t/2; \cdot) - M_p(f) h_{t/2}(\cdot) \right\|_{\ell^2} \|h_{t/2}\|_{\ell^2} = 0.
\]
The proof is now complete.
\end{proof}

\subsubsection{Data in weighted $\ell^1$ spaces.} Lastly, we study the $\ell^p$-convergence of the caloric functions with boundary datum from a class of functions which are not 
necessarily radial nor finitely supported. To this end, for each $p \in [1, \infty]$ we introduce the following weights,
\[
w_p(y)=\begin{cases}
	e^{\frac{1}{p} \,{|y|}} &\text{if } p \in [1, 2), \\
	e^{ \frac{1}{2}\,{|y|} } &\text{if } p \in [2, \infty],
\end{cases}
\]
where $y \in \mathcal{T}$. Since $w_p(y) \geqslant 1$, we have
\[
\ell^1(w_p)=\Big\{f:\mathcal{T}\rightarrow \mathbb{C}:\quad \sum_{y\in \mathcal{T}}|f(y)|\,w_p(y)<+\infty \Big\}\subseteq \ell^1.
\]
Moreover, if $f \in \ell^1(w_p)$ then the mass function $M_p(f)$ is well-defined and bounded.  Indeed, if $p \in [1, 2)$, then by the definition of $M_p(f)$ in \eqref{eq:massfct1to2}, and using \eqref{eq:Busemann_bds}, we get
\begin{align*}
	|M_p(f)(x)|
	&\leqslant
	\sum_{y\in \mathcal{T}} |f(y)| 
	\fint_{\Omega(o,x)} e^{\frac{1}{p} \, h_{\omega}(y)} \, d\nu(\omega) \\
		&=\sum_{y\in \mathcal{T}} |f(y)|\, e^{\frac{1}{p} \,{|y|} } < \infty.
	\end{align*}
	On the other hand, if $p \in [2, \infty]$, then by the definition of the mass function in \eqref{eq:massfct>2} and \eqref{eq:phi_quot_bdd}, we have
	\begin{align*}
		|M_p(f)(x)|
		&\leqslant
		\sum_{y \in \mathcal{T}} |f(y)| \frac{\varphi(d(y, x))}{\varphi(d(o, x))} \\
		&\leqslant
		\sum_{y\in \mathcal{T}} |f(y)|\, e^{\frac{1}{2} \,{|y|} }.
	\end{align*}
	For initial data in $\ell^1(w_p)$ we have the following result.
	\begin{theorem}
		\label{thm:10}
		Let $p \in [1, \infty]$. Then for each $u=e^{-t\mathcal{L}}f$  with $f \in \ell^1(w_p)$, we have
		\[
		\lim_{t \to \infty}
		\frac{1}{\|h_t \|_{\ell^p}}
		\big\|u(t;\cdot)-M_p(f)(\cdot) \, h_t\|_{\ell^p} = 0.
		\]
	\end{theorem}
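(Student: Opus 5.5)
The plan is a density argument. It reduces Theorem \ref{thm:10} to the finitely supported case already treated in Subsection \ref{sec:finitelysupp}, with a uniform-in-$t$ bound on the error coming from the tail of $f$.

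Fix $\varepsilon>0$. Since finitely supported functions are dense in $\ell^1(w_p)$, choose $\tilde f$ finitely supported with $\sum_y|f(y)-\tilde f(y)|\,w_p(y)\le\varepsilon$. Set $g=f-\tilde f$, $\tilde u=e^{-t\mathcal L}\tilde f$ and $v=e^{-t\mathcal L}g$. Then
\[
\|u-M_p(f)h_t\|_{\ell^p}\le \|\tilde u-M_p(\tilde f)h_t\|_{\ell^p}+\|v\|_{\ell^p}+\|M_p(g)\|_\infty\|h_t\|_{\ell^p}.
\]
The first term is $o(\|h_t\|_{\ell^p})$ by the finitely supported case. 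For the third term, the bounds on $M_p$ established just before the statement (via \eqref{eq:Busemann_bds} for $p<2$, and via \eqref{eq:phi_quot_bdd} for $p\ge 2$) give $\|M_p(g)\|_\infty\le\sum_y|g(y)|w_p(y)\le\varepsilon$. These bounds use linearity of $M_p$ in $f$, which is clear from its definition. One should also note the harmless discrepancy between bases $q$ and $e$ in the weight. I would simply work with $q^{|y|/p}$, or with whichever weight dominates $q^{|y|/p}$.

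The main step is to control $\|v\|_{\ell^p}$ by $C\varepsilon\|h_t\|_{\ell^p}$ uniformly in $t$. By Minkowski, $\|v\|_{\ell^p}\le\sum_y|g(y)|\,\|h_t(d(\cdot,y))\|_{\ell^p}$. Since $x\mapsto h_t(d(x,y))$ is just a translate of $h_t$ under an automorphism of the tree, its $\ell^p$ norm equals $\|h_t\|_{\ell^p}$ exactly. Hence $\|v\|_{\ell^p}\le\|g\|_{\ell^1}\|h_t\|_{\ell^p}\le\varepsilon\|h_t\|_{\ell^p}$, because $w_p\ge1$. So the obstacle I anticipated, that one might need Herz's principle or Kunze--Stein as in Theorem \ref{thm:9}, disappears here: translation invariance of the $\ell^p$ norm suffices. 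The weights are needed only to control the mass function, not the solution. I would double-check this point, since the radial case needed the weaker condition $\ell^1_{\delta_p}$, which is exactly where the translation trick is unavailable (there $\ell^1$-smallness is not assumed).

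Combining the three terms gives $\limsup_{t\to\infty}\|h_t\|_{\ell^p}^{-1}\|u-M_p(f)h_t\|_{\ell^p}\le 2\varepsilon$. Letting $\varepsilon\to0$ concludes the proof. For $p=\infty$ the same argument works verbatim, using the finitely supported result for $p=\infty$. For $p=2$, either expression in \eqref{eq:M2} may be used, and both are bounded by $\sum|g|q^{|y|/2}$.
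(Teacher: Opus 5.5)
Your proposal is correct and follows essentially the same route as the paper: approximate $f$ by a finitely supported function in $\ell^1(w_p)$, bound the mass-function difference uniformly via \eqref{eq:Busemann_bds} (resp.\ \eqref{eq:phi_quot_bdd}), bound $\|u-\tilde u\|_{\ell^p}$ by Minkowski's inequality by $\|f-\tilde f\|_{\ell^1}\|h_t\|_{\ell^p}$, and invoke the finitely supported case. You are also right to use the weight $q^{|y|/p}$ from the introduction rather than $e^{|y|/p}$, since for $q\geq 3$ the latter does not dominate the Busemann bound $q^{\frac{1}{p}h_\omega(y)}\leq q^{|y|/p}$ used to control $M_p$.
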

	\begin{proof}
		Let $p \in [1, 2)$ and $\varepsilon > 0$. Since $f w_p \in \ell^1(\mathcal{T})$, there is  a finitely supported function $g$ on
		$\mathcal{T}$ such that
		\begin{align*}
			\|f - g\|_{\ell^1} 
			&\leqslant \sum_{x \in \mathcal{T}} |f(x) - g(x)| \\
			&\leqslant \sum_{x \in \mathcal{T}} |f(x) - g(x)| w_p(x) 
			\leqslant \tfrac{1}{3} \varepsilon. 
		\end{align*}
		Moreover, by \eqref{eq:massfct1to2} and \eqref{eq:Busemann_bds}, for each $x \in \mathcal{T}$,  we have
		\begin{align*}
			\big| M_p(f)(x) - M_p(g)(x)\big| 
			&\leqslant
			\sum_{y \in \mathcal{T}} |f(x) - g(x)| 
			\fint_{\Omega(o, x)} e^{\frac{1}{p}\, {h_{\omega}(y)}}\, d\nu(\omega) \\
			&\leqslant  \sum_{y\in \mathcal{T}} |f(y)-g(y)|\, e^{\frac{1}{p}\,|y| }
			< \tfrac{1}{3}\varepsilon,
		\end{align*}
		thus
		\[
		\big\|M_p(f) h_t - M_p(g) h_t \big\|_{\ell^p}
		\leqslant
		\tfrac{1}{3}\varepsilon \|h_t\|_{\ell^p}.
		\]
		Let $\tilde{u}:=e^{-t\mathcal{L}}g$. Then by the results of Section \ref{sec:finitelysupp} for finitely supported initial conditions, there is $t_1 \geqslant 1$,
		such that for all $t \geqslant t_1$,
		\[
		\frac{1}{\|h_t\|_{\ell^p}}
		\big\|\tilde{u}(t; \cdot)-M_p(g) h_t\big\|_{\ell^p}
		\leqslant \tfrac{1}{3} \varepsilon.
		\]
		Using the Minkowski inequality for integrals, we get
		\begin{align*}
			\big\|u(t; \cdot) - \tilde{u}(t; \cdot) \big\|_{\ell^p}
			&=
			\bigg\|\sum_{y \in \mathcal{T}} (f(y)-g(y)) h_t(d(y,\cdot)) \bigg\|_{\ell^p} \\
			&\leqslant
			\|f -g\|_{\ell^1} \|h_t\|_{\ell^p} 
			\leqslant \tfrac{1}{3} \varepsilon  \|h_t\|_{\ell^p}.
		\end{align*}
		Hence, for $t \geqslant t_1$,
		\begin{align*}
			\big\|u(t;\cdot)-M_p(f) h_t\|_{\ell^p}
			&\leqslant
			\big\|u(t; \cdot) - \tilde{u}(t; \cdot) \big\|_{\ell^p}
			+
			\big\|\tilde{u}(t; \cdot)-M_p(g) h_t \big\|_{\ell^p}
			\\
			&+\big\|M_p(f)  h_t - M_p(g) h_t \big\|_{\ell^p}
			\\
			&\leqslant \varepsilon \|h_t\|_{\ell^p}.
		\end{align*}
	
		The proof for $p \in [2, \infty]$ follows by the same line of reasoning provided one uses \eqref{eq:phi_quot_bdd}. We omit the details.
	\end{proof}

\end{document}